\newtheorem{theo}{Theorem}[section]
\newtheorem{lem}{Lemma}
\theoremstyle{definition}
\newtheorem{definition}{\bf Definition}
\theoremstyle{remark}
\newtheorem{cor}{\bf Corollary}
\newtheorem{prop}{\bf Proposition}
\begin{document}
\title{SPACE-TIME  FRACTIONAL STOCHASTIC EQUATIONS  ON  REGULAR BOUNDED OPEN DOMAINS}
\date{}
\maketitle
\vspace*{1cm}

\noindent {\bfseries V.V. Anh$^{1},$ N.N. Leonenko$^{2}$ and M.D. Ruiz-Medina$^{3}$}

\noindent{$^{1}$\ School of Mathematical Sciences, Queensland University of Technology, GPO Box 2434,\\
Brisbane QLD 4001, Australia; e-mail: v.anh@qut.edu.au\\
$^{2}$ School of Mathematics, Cardiff University, Senghennydd Road,\\ Cardiff CF2 4YH, UK; e-mail: leonenkon@cardiff.ac.uk\\
$^{3}$ Faculty of Sciences, Campus Fuente Nueva s/n,\\ Granada University, 18071 Granada, Spain; e-mail: mruiz@ugr.es}




 \bigskip \medskip
\noindent \emph{This paper is now published (in revised form) in \textbf{Fractional Calculus and Applied Analysis}, \textbf{Vol. 19, pp. 1161--1199,}  
DOI: 10.1515/fca-2016-0061, and  is available online at\\  http://www.degruyter.com/view/j/fca. }
 \begin{abstract}

Fractional (in time and in space) evolution  equations defined  on
Dirichlet regular bounded open domains, driven by fractional
integrated in time  Gaussian spatiotemporal white noise,   are
considered here. Sufficient conditions for the definition of a
weak-sense Gaussian solution, in the mean-square sense,  are
derived. The temporal, spatial and spatiotemporal H\"older
continuity, in the mean-square sense,  of the formulated  solution is
obtained, under suitable conditions,  from the asymptotic properties
of the Mittag-Leffler function, and the asymptotic order of the
eigenvalues of a fractional polynomial of the Dirichlet
  negative Laplacian operator on such bounded open domains.

 \medskip

{\it MSC 2010\/}: Primary 60G60, 60G15, 60G22; Secondary 60G20,
60G17, 60G12.

 \smallskip

{\it Key Words and Phrases}: Caputo-Djrbashian fractional-in-time
derivative,
 Dirichlet regular bounded
open domains, eigenfunction expansion, fractional pseudodifferential
elliptic operators, Gaussian spatiotemporal white noise measure,
Mittag-Leffler function, Riemannan-Liouville fractional integral and
derivative, stochastic boundary value problems

 \end{abstract}

 \maketitle

 \vspace*{-16pt}



 \section{Introduction}\label{sec:1}

\setcounter{section}{1}

Space-time fractional diffusion equations are introduced when
integer-order derivatives in space and in time are replaced by their
fractional counterpart (see, for example, \cite{Pagnini}). In particular, they can model anomalous
diffusion processes in physics (Meerschaert \emph{et al.}
\cite{Meerschaert02}). Fractional diffusion equations are very
popular in several fields of application (see Gorenflo and Mainardi
\cite{Gorenflo03};
 Metzler and Klafter \cite{Metzler04}, among others).

The development of space-time fractional diffusion equations is supported by
the theory of operators for generalized fractional calculus (see, for example,  Kiryakova \cite{Kiryakova}, \cite{Kiryakova2}; Machado, Kiryakova and Mainardi \cite{Machado}, \cite{Machado0}, and the references therein).

Since the pioneer papers by Bochner \cite{Bochner49} and Feller
\cite{Feller52} who proved the connection between the stable
distribution and fractional calculus, the theory of $\alpha $-stable
distributions and processes has been extensively developed.
Specifically, Bochner \cite{Bochner49} formulated the Cauchy
problem, whose solution is the symmetric $\alpha $-stable
distribution. Feller \cite{Feller52} extended these results to a
more general situation by replacing the fractional Laplacian
$-\left( -\Delta \right) ^{\alpha /2}$ by a pseudodifferential
operator with symbol
$$-\left\vert \lambda \right\vert ^{\alpha }\exp \left(
i\,\text{sign}\left( \lambda \right) \theta \pi
/2\right) ,\quad \lambda \in \mathbb{R},\quad \alpha \in (0,2),$$ \noindent where $\alpha $ is the index of stability, and $%
\theta $ is the index of skewness (asymmetry). The corresponding
solutions generate all stable distributions. The Cauchy problem characterizing the main properties
of the Riesz-Bessel distribution, generated by the pseudodifferential
operator $(-\Delta )^{\alpha /2}(I-\Delta )^{\gamma /2},$ was analyzed in
Anh, Leonenko and Sikorskii \cite{AnhLeonenki14}. In spite of a large number
of fractional operators (see Samko \emph{et al.} \cite{Samko93}), there were
few known specific examples producing more general distributions.

The traditional model for particle spreading at the macroscopic level is the
well-known heat equation
\begin{equation*}
\partial _{t}u=\Delta u,
\end{equation*}%
\noindent with $\Delta $ denoting the Laplacian operator and $\partial _{t}$
the partial derivative in time. The relative particle concentration can be
predicted in terms of the Gaussian probability density providing a point
source solution of the heat equation. The paths of individual particles are
described in terms of the realizations of Browninan motion. Dirichlet
boundary value problems for the heat equation, as well as for more general
equations, given in terms of elliptic diffusion operators, are detailed in
Bass \cite{Bass98} and Davies \cite{Davies89}, among others. The phenomena
of particle sticking and trapping can be described by using the fractional
derivative $\partial _{t}^{\beta }$ for $0<\beta <1$ instead of the partial
derivative in time $\partial _{t}.$ On the other hand, long particle jumps
can be represented by the fractional power $(-\Delta )^{\alpha /2},$ for $%
0<\alpha <2,$ in place of the negative Laplacian $(-\Delta )$. The
space-time fractional diffusion equation is then defined in terms of both
fractional derivatives in time and in space:
\begin{equation}\partial_{t}^{\beta }u=(-\Delta )^{\alpha
/2}u,\label{eqpde}\end{equation} \noindent whose solution displays
self-similarity and heavy tails. The particle concentration profile
provided by the corresponding probability density solution has
sharper peak and heavy tails. A non-Markovian setting can then be
introduced through time change by an
inverse stable subordinator (see
also Barkai \emph{et al.} \cite{BarkaiMetzler00}; Benson \emph{et
al.} \cite{BensonWheatcraft00}; Gorenflo  and Mainardi
\cite{Gorenflo99}; \cite{Gorenflo03};  Meerschaert \emph{et al.}
 \cite{Meerschaert02}; Schneider and Wyss \cite{Schneider89}, among others). An extension to the case of Riesz-Bessel
subordinators was addressed in Anh and McVinish \cite{AnhMcVinish04}, using
the pseudodifferential operator $(-\Delta )^{\alpha /2}(I-\Delta )^{\gamma
/2}.$ A  different stochastic framework is
analyzed in the papers by Anh and Leonenko \cite{AnhLeonenki01},
\cite{AnhLeonenki03}, where the spectral representation of the
mean-square solution of the following stochastic space-time fractional
 equation with random initial conditions
\begin{equation*}\partial_{t}^{\beta }u=(-\Delta
)^{\alpha /2}\left( I-\Delta \right) ^{\gamma /2}u, \quad
u_{0}(\mathbf{x})=\eta (\mathbf{x}),\quad \mathbf{x}\in
\mathbb{R}^{n},\label{eqpde}\end{equation*} \noindent
 is derived. Here,  $\eta $ is a measurable random field defined on a
complete probability space $(\Omega, \mathcal{A},P).$ Gaussian and
non-Gaussian limiting distributions of the renormalized solution are
obtained as well. Also, in the context of stochastic evolution
equations on an unbounded domain, a functional approach was adopted
by Kelbert, Leonenko and Ruiz-Medina \cite{Kelbert05}, where the
spectral properties of the mean-square solution  of fractional in
time and in space evolution equations driven by random white noise
are derived. These results are extended to the more general
framework of stochastic partial differential equations driven by
fractional Brownian motion in Leonenko, Ruiz-Medina, Taqqu
\cite{Leonenko11}, where the spectral correlation structure
 of the mean-square solution to fractional  space-time evolution equations driven by fractional Brownian
are analyzed. In the particular case of the domain being an
$n$-dimensional rectangle, starting from the pioneering papers on the
stochastic heat equation by Walsh \cite{Walsh}, \cite{Walsh2}, Angulo,
Ruiz-Medina, Anh and Grecksch \cite{Angulo2} considered the following
fractional (in space) version of the heat equation:
\begin{equation}
\frac{\partial }{\partial t}c\left( t,x\right) +\left( -\Delta
\right) ^{\alpha /2}\left( I-\Delta \right) ^{\gamma /2}c\left(
t,x\right) =\varepsilon \left( t,x\right) ,  \quad \label{1.1}
\end{equation}%

\noindent for  $t\in \mathbb{R}_{+},\ x\in D=(0,L_{1})\times\dots
\times  (0,L_{n}),$ and $n<\alpha +\gamma <n+2,$ where $\varepsilon
\left( t,x\right) $ is a zero-mean Gaussian space-time white noise.
The space-time mean-quadratic and sample-path local variation
properties of the solution are derived in this paper. The unbounded
domain case is addressed as well.  Angulo,  Anh,  McVinish
and Ruiz-Medina \cite{Angulo1} derive the extension of these results to the
case of fractional derivatives in time and space with the same
spatial pseudodifferential operator $\left( -\Delta \right) ^{\alpha
/2}\left( I-\Delta \right) ^{\gamma /2}$ and $D=(0,L_{1})\times\dots
\times  (0,L_{n})$ (see also the references therein).

In the context of fractional diffusion on bounded domains, we refer
to the papers by Defterli, D'Elia,  Du,  Gunzburger, Lehoucq and
Meerschaert \cite{Defterli}; Chen,  Meerschaert and   Nane
\cite{ChenMeerschaert12}, and Meerschaert,  Nane and Vellaisamy
\cite{Meerschaert03}, where strong solutions, and their
probabilistic representation are obtained. On the other hand, Mijena
and Nane \cite{MijenaNane} consider fractional heat equation on
unbounded domains, with a non-linear random external force,
involving space-time white noise. Sufficient conditions for the
existence and uniqueness of mild solutions, as well as for their
continuity are derived. We consider here a different framework.
Specifically, we study the weak-sense solution of the following
fractional in space and in time stochastic partial differential
equation, with Dirichlet boundary conditions, and null initial
condition:
\begin{eqnarray} & &\hspace*{-1.5cm}\frac{\partial ^{\beta }}{\partial
t^{\beta }}c\left( t,\mathbf{x}\right) +\left( -\Delta_{D} \right)
^{\alpha /2}\left( I-\Delta_{D} \right) ^{\gamma
/2}c\left( t,\mathbf{x}\right) =I^{1-\beta }_{t}\varepsilon \left( t,\mathbf{x}\right),\ \mathbf{x}\in D \label{1.2}\\
& & c(t,\mathbf{x}) = 0,\quad \mathbf{x}\in \partial D,\quad \forall
t,\quad c(0,\mathbf{x})=0,\quad \forall \mathbf{x}\in D\subset
\mathbb{R}^{n}, \label{1.2b}
\end{eqnarray}
\noindent \noindent for $\beta \in (0,1),$ $\alpha +\gamma >n,$
where equality is understood in the mean-square sense.  Here, the
driven process
\begin{equation}
I^{1-\beta }_{t}\varepsilon =\frac{1}{\Gamma (1-\beta
)}\int_{0}^{t}(t-u)^{-\beta }\varepsilon(u)du\label{fiFLeq}
\end{equation}
\noindent is constructed from space-time Gaussian white noise
$\varepsilon ,$ defined on a basic probability space $(\Omega
,\mathcal{A},P),$ and satisfying $$E[\varepsilon \left(
t,\mathbf{x}\right)\varepsilon \left( s,\mathbf{y}\right)]=\delta
(t-s)\delta (\mathbf{x}-\mathbf{y}),$$\noindent  for all $t,s\in
\mathbb{R}_{+},$ and  $\mathbf{x},\mathbf{y}\in D,$ with $\delta $
being the Dirac Delta distribution. Specifically,  the driven
process is constructed from  fractional integration of order $\beta
-1,$ in time,    of the space-time Gaussian white noise $\varepsilon
,$ where integration is understood in the mean-square sense (see,
for example, Samko \emph{et al.} \cite{Samko93}). It is well-known
that the inverse of the  fractional integral of order $\beta -1$ is
the  fractional derivative of order $1-\beta .$
This fractional derivative, considered in
this paper, is the regularized fractional derivative in time or
fractional-in-time derivative in the Caputo-Djrbashian sense: For $\beta \in
(0,1],$
\begin{equation}
\frac{\partial ^{{}\beta }u}{\partial t^{{}\beta }}=\left\{
\begin{array}{ll}
\frac{\partial u}{\partial t}\left( t,\mathbf{x}\right) , & \text{if }\beta
=1 \\
\frac{1}{\Gamma \left( 1-\beta \right) }\frac{\partial }{\partial t}%
\int_{0}^{t}\left( t-\tau \right) ^{-\beta }u\left( \tau ,\mathbf{x}\right)
d\tau -\frac{u(0,\mathbf{x})}{t^{\beta }}, & \text{if}\ \beta \in (0,1),\
t\in (0,T]%
\end{array}%
\right.   \label{1.3}
\end{equation}%
\noindent (see Meerschaert and Sikorskii \cite{Meerschaert02b}; Podlubny
\cite{Podlubny99}).

Although we refer here to the particular case where fractional
derivatives in space are defined from the operator $\left(
-\Delta_{D} \right) ^{\alpha /2}\left( I-\Delta_{D} \right) ^{\gamma
/2},$ with $(-\Delta_{D})$ representing the Dirichlet negative
Laplacian operator on regular bounded open domain $D,$ the results
derived in this paper hold, in general, for  a  fractional
polynomial of the Dirichlet negative Laplacian operator on $D,$ with
constant coefficients, as proved in Theorem \ref{thfg} in Section
\ref{FC1}. In this paper, special attention has been  paid to
operator $\left( -\Delta_{D} \right) ^{\alpha /2}\left( I-\Delta_{D}
\right) ^{\gamma /2},$ since, for suitable domains, e.g., for
bounded open domain satisfying the exterior cone condition, the
eigenvalues of such an operator provide two-sided estimates of the
eigenvalues of the corresponding restriction of the inverse of the
composition of Riesz and Bessel potentials, for certain  range of
parameter $\alpha $ (see, for example, Chen and  Song
\cite{ChenSong05}).

Our main goal is the study of the local regularity (modulus of continuity)
of the derived weak-sense Gaussian solution to equations (\ref{1.2})--(\ref{1.2b}).
Sufficient conditions are formulated  to obtain the mean-quadratic
local asymptotic order of the temporal, spatial and spatiotemporal
increment random fields, associated with the weak-sense Gaussian
solution to equations (\ref{1.2})--(\ref{1.2b}) (see Theorems
\ref{prtqv}, \ref{th2} and \ref{stinc} below). Specifically, the
results derived hold under the condition that the regular bounded
open domain $D$  is such that the eigenvectors of the Dirichlet
negative  Laplacian operator on $D$ are uniformly bounded. Some
examples of domains $D,$ where this condition is satisfied, are
provided in Section \ref{examples}. Furthermore, the mean-square
H\"older continuity in time  of the random field solution  is
obtained under some restrictions on the parameter space. While its
mean-square H\"older continuity in space  requires the H\"older
continuity of the eigenvectors of the Dirichlet negative Laplacian
operator on domain $D.$ The mean-square H\"older continuity in space
and time directly follows, under the above  conditions.
Also, under such conditions,
the sample-path local asymptotic orders are obtained immediately from
Theorem 3.3.3 of Adler \cite{Adler81} (see Theorem \ref{thspp} below). Note
that, although the time fractional differentiation in equation (\ref{1.2})
is understood in the weak-sense, the Gaussian solution $c$ is H\"{o}lder
continuous in the mean-square sense under the conditions assumed in this
paper.

This paper does not adopt the classical framework of diffusion processes
characterized by the Kolmogorov forward equation or Fokker--Planck equation
(see, for example, \cite{Hahn}). In our case, the regularized fractional
derivative in time, or fractional-in-time derivative in the
Caputo-Djrbashian sense, and the Fokker--Planck operator with constant
coefficients are applied, in the mean-square sense, to a spatiotemporal
Gaussian random field for its \emph{almost decorrelation in space and time}.
Hence, local self-similarity is observed in the correlation structure in
space and in time of the weak-sense mean-square Gaussian solution $c,$ as we
will prove in this paper. The approach adopted is then different from that
considered in Chen, Meerschaert and Nane \cite{ChenMeerschaert12}, since, in
the latter approach, the properties of the transition probability densities
are investigated, while, in this paper, new classes of spatiotemporal
Gaussian random fields, displaying local self-similarity, are introduced in
the weak sense. In particular, the exponents of their local self-similarity
are computed in time, space and space-time, in the mean-square and
sample-path sense.

Finally, we recall the interest of considering especially spatiotemporal
Gaussian random fields on Dirichlet regular bounded open domains (see
Fuglede \cite{Fuglede05}), including, as particular cases, bounded open $%
C^{\infty }$- domains, domains with $C^{1}$-boundary, with Lipschitz
continuous boundary, or with fractal boundary, among others. Special
attention , in the current literature, has been paid to the unit ball and
the unit sphere, motivated by the analysis of Cosmic Microwave Background
(CMB) radiation (see, for example, Leonenko and Sakhno \cite{Leonenko12};
Malyarenko \cite{Malyarenko12}; Marinucci and Peccati \cite{Marinucci11}).
In this setting, tensor-valued random fields on the unit sphere are
considered for the investigation of the combinations $Q\pm \ iU,$ with $Q$
and $U$ respectively representing the linear and circular polarization
Stokes parameters.

The outline of the paper is as follows. Preliminary elements and
results are presented in Section \ref{Sec2}.   The derivation of a
weak-sense mean-square Gaussian solution to equations
(\ref{1.2})-(\ref{1.2b}) is established in Section \ref{wmssol}.
  The
mean-quadratic local variation exponents in time of the derived
solution are obtained in Section \ref{Sec3}. The mean-quadratic
local variation exponents in space are given in Section \ref{Sec4}.
Section \ref{Sec5} then provides the asymptotic local mean quadratic
orders in space and time. The modulus of continuity of the sample
paths of the weak-sense mean-square solution to equations
(\ref{1.2})--(\ref{1.2b}) is also derived in this section . Some
examples are provided in Section \ref{examples} for illustration
purposes. The extended formulation of the results derived for
fractional polynomials of the Dirichlet negative Laplacian operator
are presented in Section \ref{FC1}. Final comments and some open
research lines are discussed in Section \ref{FC}.

\section{Preliminaries}
\label{Sec2}

\setcounter{section}{2}

Some preliminary definitions and results needed in the development
of this paper are now introduced. Specifically, some basic results
on spectral calculus for self-adjoint operators on a Hilbert space
are given in Section \ref{sec:21}. The  Mittag-Leffler function is
conisdered in Section \ref{MLF}. Basic elements on fractional
Sobolev spaces on a regular  bounded open domain are presented  in
Section \ref{fssrbod}.

 \subsection{Spectral theory of self-adjoint  operators on a separable Hilbert space}
\label{sec:21}

Let us first consider some results on spectral calculus for
self-adjoint operators on a Hilbert space.
\begin{theo}
\label{theorem:1} (Dautray and Lions, 1990, pp. 119-120
\cite{Dautray90}) Let $H$ be a separable Hilbert space, then an
injection mapping $\widehat{\sigma}$
 exists from the set of spectral families in $H$ into the set of self-adjoint operators on $H$. The following assertions hold:

 Let $\mathbb{A}$ be the self-adjoint operator associated with the spectral family $\left\lbrace E_\lambda \right\rbrace_{\lambda \in \Lambda},$ where
 $\Lambda $ denotes the  spectrum of $\mathbb{A}.$ The domain of $\mathbb{A}^k$ is defined by

\begin{equation}
D\left(\mathbb{A}^k\right) = \left\lbrace x \in H:~\displaystyle
\int_{\Lambda } \lambda^{2k}d\left(E_\lambda x, x\right) < \infty
\right\rbrace,~k \geq 1. \label{1}
\end{equation}
\noindent For all $x\in D\left(\mathbb{A}^k\right),$ and for all $y
\in H,$

\begin{eqnarray}
\langle \mathbb{A}^k x,y\rangle_H &=& \displaystyle \int_{\Lambda }
\lambda^k d\left(E_\lambda x, y\right), \\ \label{2} \Vert
\mathbb{A}^k x \Vert_{H}^{2} &=& \displaystyle \int_{\Lambda}
\lambda^{2k} d\left(E_\lambda x, x\right). \label{3}
\end{eqnarray}

 If $P_k \left( \lambda \right)$ is a polynomial of degree $k,$ then, for all $x \in D\left(\mathbb{A}^k \right),$ and for all $y \in H$, $P_k \left(\mathbb{A}\right)$ is given by

\begin{equation}
\langle P_k \left(\mathbb{A} \right) x,y \rangle_H = \displaystyle
\int_{\Lambda} P_k \left( \lambda \right)d\left(E_\lambda x, y
\right). \label{4}
\end{equation}
\noindent Finally, for a continuous function  $f$  on $\Lambda,$ the
following identities hold for every $x\in D\left(f\left(\mathbb{A}
\right) \right),$ and $y \in H,$

\begin{equation}
\langle f\left(\mathbb{A} \right) x, y \rangle_H = \displaystyle
\int_{\Lambda} f\left( \lambda \right) d\left(E_\lambda x,y \right).
\label{5}
\end{equation}

\end{theo}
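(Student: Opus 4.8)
The plan is to obtain the full self-adjoint functional calculus by first verifying the elementary direction (a spectral family defines a self-adjoint operator), then reducing the harder direction to the case of a bounded — indeed unitary — operator via the Cayley transform, where Stone--Weierstrass and Riesz--Markov do the work, and finally reading off the domain characterization, the identities \eqref{2}--\eqref{5}, and injectivity of $\widehat{\sigma}$ from the resulting resolution of the identity.

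First I would show that, given a spectral family $\{E_\lambda\}_{\lambda\in\Lambda}$, the operator $\mathbb{A}:=\int_\Lambda\lambda\,dE_\lambda$ with domain $\{x\in H:\int_\Lambda\lambda^2\,d(E_\lambda x,x)<\infty\}$ is densely defined, symmetric and satisfies $(\mathbb{A}\pm iI)D(\mathbb{A})=H$, hence is self-adjoint; this is a direct computation with the scalar measures $\mu_x(\cdot)=(E_{(\cdot)}x,x)$ and $\mu_{x,y}(\cdot)=(E_{(\cdot)}x,y)$, using the bound $|\mu_{x,y}(\Delta)|\le\mu_x(\Delta)^{1/2}\mu_y(\Delta)^{1/2}$ coming from Cauchy--Schwarz for the sesquilinear form $(x,y)\mapsto(E_\Delta x,y)$. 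Iterating the computation gives $\mathbb{A}^k=\int_\Lambda\lambda^k\,dE_\lambda$ on the domain \eqref{1}: one approximates $\mathbb{A}$ by the bounded truncations $\mathbb{A}_N=\int_{[-N,N]}\lambda\,dE_\lambda$, notes $\|\mathbb{A}_N^k x\|_H^2=\int_{[-N,N]}\lambda^{2k}\,d\mu_x(\lambda)$, and combines the Cauchy criterion for the convergence of $\mathbb{A}_N^k x$ with monotone convergence on the scalar side and the closedness of $\mathbb{A}^k$. Then \eqref{2} and \eqref{3} are the specializations $f(\lambda)=\lambda^k$ of \eqref{5}, and \eqref{4} is the case $f=P_k$, so the remaining point is \eqref{5}.

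For a continuous $f$ on $\Lambda$ and $x\in D(f(\mathbb{A}))$, I would prove \eqref{5} by splitting $f=f\mathbf{1}_{[-N,N]}+f\mathbf{1}_{\{|\lambda|>N\}}$, approximating $f\mathbf{1}_{[-N,N]}$ uniformly on $[-N,N]$ by polynomials $P_m$ (Stone--Weierstrass), passing to the limit in $\langle P_m(\mathbb{A})E_N x,y\rangle=\int_{[-N,N]}P_m\,d\mu_{x,y}$ via the total-variation control of $\mu_{x,y}$ by $\mu_x,\mu_y$, and finally letting $N\to\infty$, the tail being handled by $\int_{\{|\lambda|>N\}}|f(\lambda)|^2\,d\mu_x(\lambda)\to0$, which holds precisely because $x\in D(f(\mathbb{A}))$. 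The same circle of ideas, applied to a \emph{bounded} self-adjoint operator $\mathbb{B}$ from the outset, supplies the existence of the spectral family in the bounded case: $P\mapsto P(\mathbb{B})$ extends (using $\|P(\mathbb{B})\|=\sup_{\sigma(\mathbb{B})}|P|$) to a $*$-homomorphism $f\mapsto f(\mathbb{B})$ on $C(\sigma(\mathbb{B}))$; for fixed $x,y$ the functional $f\mapsto\langle f(\mathbb{B})x,y\rangle_H$ is represented by a regular measure $\mu_{x,y}$ (Riesz--Markov), and $E_\lambda:=\mathbf{1}_{(-\infty,\lambda]}(\mathbb{B})$ is then checked to be a spectral family with $d(E_\lambda x,y)=d\mu_{x,y}$.

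To reach an arbitrary, possibly unbounded, self-adjoint $\mathbb{A}$, I would invoke the Cayley transform $U=(\mathbb{A}-iI)(\mathbb{A}+iI)^{-1}$, which is unitary with $\ker(I-U)=\{0\}$ and $\mathbb{A}=i(I+U)(I-U)^{-1}$; applying the bounded construction to $U$ (whose real and imaginary parts are commuting bounded self-adjoint operators) yields a spectral family on $\sigma(U)\subset\mathbb{T}$ with no jump at $1$, which I transport to the line through the decreasing bijection $\lambda=-\cot(\theta/2)$ to obtain $\{E_\lambda\}$ for $\mathbb{A}$. The hard part will be precisely here: one must check that the operator reconstructed from this $\{E_\lambda\}$ equals $\mathbb{A}$ exactly — neither a proper extension nor a proper restriction — which I would do by verifying the resolvent identity $\int_\Lambda(\lambda-z)^{-1}\,dE_\lambda=(\mathbb{A}-zI)^{-1}$ for $\operatorname{Im}z\neq0$ and invoking maximality of self-adjoint operators. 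Finally, injectivity of $\widehat{\sigma}$ follows because a self-adjoint operator determines its resolvent, and the resolvent recovers the spectral family through the Stieltjes--Perron inversion formula
\[
\tfrac12\bigl(E_\lambda+E_{\lambda^-}\bigr)-\tfrac12\bigl(E_\mu+E_{\mu^-}\bigr)=\lim_{\varepsilon\downarrow0}\frac{1}{2\pi i}\int_\mu^\lambda\bigl[(\mathbb{A}-tI-i\varepsilon I)^{-1}-(\mathbb{A}-tI+i\varepsilon I)^{-1}\bigr]\,dt,
\]
understood in the strong operator topology, so that two spectral families inducing the same operator must coincide.
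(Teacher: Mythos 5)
The paper offers no proof of this statement to compare yours against: Theorem \ref{theorem:1} is quoted verbatim from Dautray and Lions \cite{Dautray90} and used as a black box in the sequel (only identity (\ref{5}) is actually invoked, in Corollary \ref{cor1} and in Theorem \ref{thfg}). Your sketch is the classical proof of the spectral theorem and is correct in outline, and it is essentially the route taken in the cited reference and in the standard texts: direct integration against a given spectral family (with the Cauchy--Schwarz bound $|\mu_{x,y}(\Delta)|\leq \mu_{x}(\Delta)^{1/2}\mu_{y}(\Delta)^{1/2}$, truncations $\mathbb{A}_{N}$, and closedness) for one direction; Stone--Weierstrass plus Riesz--Markov for the bounded continuous calculus; the Cayley transform to pass to unbounded $\mathbb{A}$, with the resolvent identity and maximality of self-adjoint operators guaranteeing that the reconstructed operator is exactly $\mathbb{A}$; and Stieltjes--Perron inversion for injectivity of $\widehat{\sigma}$. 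Three small points deserve care when you write it out. First, for the domain identity (\ref{1}) you need both inclusions: the truncation/closedness argument gives that finiteness of $\int_{\Lambda}\lambda^{2k}\,d\mu_{x}$ implies $x\in D(\mathbb{A}^{k})$; conversely, for $x\in D(\mathbb{A}^{k})$ use $\|\mathbb{A}^{k}E_{N}x\|_{H}^{2}=\int_{[-N,N]}\lambda^{2k}\,d\mu_{x}\leq\|\mathbb{A}^{k}x\|_{H}^{2}$ and monotone convergence. Second, identity (\ref{5}) is only a theorem once $f(\mathbb{A})$ has been defined; you implicitly take $D(f(\mathbb{A}))=\{x:\int_{\Lambda}|f(\lambda)|^{2}\,d\mu_{x}(\lambda)<\infty\}$ and $f(\mathbb{A})=\int_{\Lambda}f(\lambda)\,dE_{\lambda}$, which is the right convention, but it should be stated, since your tail estimate relies on it. Third, for the unitary $U$ it is cleaner to run the Stone--Weierstrass/Riesz--Markov argument directly with trigonometric polynomials on the circle (or the continuous calculus for normal operators) rather than through a joint spectral measure of $\operatorname{Re}U$ and $\operatorname{Im}U$, which would itself require an additional commuting-operators construction; one must also check that $\ker(I-U)=\{0\}$ translates into $E_{\lambda}\to I$ strongly as $\lambda\to\infty$ for the transported family, so that it is a genuine spectral family on the line.
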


\begin{theo}
\label{theorem:2} (Dautray and Lions, 1990, p. 140
\cite{Dautray90}) Let $\mathbb{A}$ be a self-adjoint operator in a
separable Hilbert space $H$. If we denote $\overline{f}$ the complex
conjugate function for $f$, then $D\left(\overline{f}\left(
\mathbb{A} \right)\right) = D\left(f\left(
\mathbb{A}\right)\right)$. Moreover we have
 $\langle f\left(\mathbb{A}\right)x,y\rangle_H = \langle x^{*}, \overline{f}\left(\mathbb{A}\right)y^{*} \rangle,\ $ for all $x,~y \in D\left(f\left(\mathbb{A}\right)\right)$.

 For $x \in D\left(f\left(\mathbb{A}\right)\right),$ and $y \in D\left(g\left(\mathbb{A}\right)\right),$ then

\begin{equation}
\langle f\left(\mathbb{A}\right)x,g\left(\mathbb{A}\right)y\rangle_H
= \displaystyle \int_{\Lambda} f\left(\lambda\right)
\overline{g}\left(\lambda\right) d\left(E_\lambda x,y\right).
\label{6}
\end{equation}

\noindent Furthermore,  $\left(f + g\right)\left(\mathbb{A}\right)x
= f\left(\mathbb{A}\right)x + g\left(\mathbb{A}\right)x,\ $ for all
$x \in D\left(f\left(\mathbb{A}\right)\right) \cap
D\left(g\left(\mathbb{A}\right)\right).$

\noindent Finally,  if $x \in
D\left(f\left(\mathbb{A}\right)\right)$, with $\left(g \circ
f\right)\left(\lambda \right) =
g\left(\lambda\right)f\left(\lambda\right)$, then $\left[
g\left(\mathbb{A}\right)f\left(\mathbb{A}\right)\right] x = \left( g
\circ f \right) \left(\mathbb{A}\right)x$.

\end{theo}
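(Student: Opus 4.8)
The plan is to push everything through the scalar spectral measures $\mu_{x,y}(\cdot)=(E_{(\cdot)}x,y)$ and the representation $\langle f(\mathbb{A})x,y\rangle_H=\int_{\Lambda}f(\lambda)\,d\mu_{x,y}(\lambda)$ already recorded in Theorem \ref{theorem:1}, using only two structural facts about a spectral family: (i) each $E_\lambda$ is an orthogonal projection, so $\mu_{x,y}(B)=\overline{\mu_{y,x}(B)}$ and, by Cauchy--Schwarz for the semidefinite forms $(E_B\,\cdot\,,\cdot)$, the total variation obeys $|\mu_{x,y}|(\Lambda)\le\|x\|_H\,\|y\|_H$; and (ii) multiplicativity, $E_\lambda E_\mu=E_{\lambda\wedge\mu}$, equivalently $E_BE_C=E_{B\cap C}$ for Borel sets $B,C\subset\Lambda$.

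First I would settle the domain equality and the adjoint relation. By the definition in $(\ref{1})$, $x\in D(f(\mathbb{A}))$ iff $\int_{\Lambda}|f(\lambda)|^2\,d\mu_{x,x}(\lambda)<\infty$, and since $|\overline{f}(\lambda)|=|f(\lambda)|$ pointwise, this is verbatim the defining condition for $D(\overline{f}(\mathbb{A}))$; hence $D(\overline{f}(\mathbb{A}))=D(f(\mathbb{A}))$. For the adjoint identity, take $x\in D(f(\mathbb{A}))=D(\overline{f}(\mathbb{A}))$ and $y\in H$, and compute $\langle f(\mathbb{A})x,y\rangle_H=\int_{\Lambda}f\,d\mu_{x,y}=\int_{\Lambda}f\,\overline{d\mu_{y,x}}=\overline{\int_{\Lambda}\overline{f}\,d\mu_{y,x}}=\overline{\langle\overline{f}(\mathbb{A})y,x\rangle_H}=\langle x,\overline{f}(\mathbb{A})y\rangle_H$, the middle steps being legitimate because $x\in D(\overline{f}(\mathbb{A}))$ forces $\int_{\Lambda}|\overline{f}|\,d|\mu_{y,x}|<\infty$ by (i).

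Additivity $(f+g)(\mathbb{A})x=f(\mathbb{A})x+g(\mathbb{A})x$ on $D(f(\mathbb{A}))\cap D(g(\mathbb{A}))$ is then immediate: the estimate $|f+g|^2\le 2|f|^2+2|g|^2$ shows this intersection lies in $D((f+g)(\mathbb{A}))$, and testing against arbitrary $y\in H$ and invoking linearity of $\int(\cdot)\,d\mu_{x,y}$ together with uniqueness of the representing vector gives the claim. The product rule is the real content. I would first prove $f(\mathbb{A})g(\mathbb{A})=(fg)(\mathbb{A})$ for bounded Borel $f,g$: approximate $f,g$ uniformly by simple functions $f_n,g_n$ built on a common refining sequence of partitions of $\Lambda$, for which $f_n(\mathbb{A})g_n(\mathbb{A})=(f_ng_n)(\mathbb{A})$ is a finite algebraic computation using $E_BE_C=E_{B\cap C}$, and then pass to the limit via $\|h(\mathbb{A})\|\le\sup_{\Lambda}|h|$ for bounded $h$. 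Finally, for the stated (generally unbounded) $f$ with $x\in D(f(\mathbb{A}))$, introduce the truncations $f^{(k)}=f\,\mathbf{1}_{\{|f|\le k\}}$, so that $f^{(k)}(\mathbb{A})x\to f(\mathbb{A})x$ in $H$; since $d\mu_{f(\mathbb{A})x,f(\mathbb{A})x}=|f|^2\,d\mu_{x,x}$, the hypothesis that makes $g(\mathbb{A})f(\mathbb{A})x$ meaningful is exactly $x\in D((g\circ f)(\mathbb{A}))$, i.e. $\int_{\Lambda}|g(\lambda)f(\lambda)|^2\,d\mu_{x,x}<\infty$, and under it one applies the bounded case at the truncated level, $g(\mathbb{A})f^{(k)}(\mathbb{A})x=(g f^{(k)})(\mathbb{A})x$, and lets $k\to\infty$ using dominated convergence for the complex measures $\mu_{x,y}$ to obtain $[g(\mathbb{A})f(\mathbb{A})]x=(g\circ f)(\mathbb{A})x$.

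Identity $(\ref{6})$ then drops out by combining the adjoint relation with the product rule: for $x\in D(f(\mathbb{A}))$ and $y\in D(g(\mathbb{A}))$, $\langle f(\mathbb{A})x,g(\mathbb{A})y\rangle_H=\langle\overline{g}(\mathbb{A})f(\mathbb{A})x,y\rangle_H=\langle(\overline{g}\,f)(\mathbb{A})x,y\rangle_H=\int_{\Lambda}f(\lambda)\overline{g}(\lambda)\,d\mu_{x,y}(\lambda)$, with the Cauchy--Schwarz bound $\bigl(\int_{\Lambda}|f|^2\,d\mu_{x,x}\bigr)^{1/2}\bigl(\int_{\Lambda}|g|^2\,d\mu_{y,y}\bigr)^{1/2}$ guaranteeing integrability of $f\overline{g}$ against $\mu_{x,y}$. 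The main obstacle I anticipate is the last transition in the product rule: bookkeeping the domains so that the left-hand side is well defined precisely on $D(f(\mathbb{A}))\cap D((g\circ f)(\mathbb{A}))$, and verifying that the truncation limits for $f(\mathbb{A})$ and for $g(\mathbb{A})f(\mathbb{A})$ are compatible; everything else is routine measure theory once the bounded multiplicativity $E_BE_C=E_{B\cap C}$ is in hand.
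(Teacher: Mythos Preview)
The paper does not prove this theorem at all: it is quoted verbatim from Dautray and Lions \cite{Dautray90}, p.~140, as background for the functional calculus used in Corollary~\ref{cor1}, and no proof is supplied. So there is no ``paper's own proof'' to compare against; your sketch is essentially the standard textbook argument one would find in Dautray--Lions or Reed--Simon, and is in spirit exactly what those references do.

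One genuine gap to flag in your derivation of \eqref{6}: the step $\langle f(\mathbb{A})x,\,g(\mathbb{A})y\rangle_H=\langle\overline{g}(\mathbb{A})f(\mathbb{A})x,\,y\rangle_H$ tacitly assumes $f(\mathbb{A})x\in D(\overline{g}(\mathbb{A}))$, i.e.\ $\int_{\Lambda}|g|^2|f|^2\,d\mu_{x,x}<\infty$. The hypotheses give only $\int|f|^2\,d\mu_{x,x}<\infty$ and $\int|g|^2\,d\mu_{y,y}<\infty$, which do not imply this (take $f=g$ unbounded and $x=y$ with $f(\mathbb{A})x\notin D(f(\mathbb{A}))$). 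The clean fix is the one you already set up for the product rule: prove \eqref{6} first for bounded $f,g$ via $E_BE_C=E_{B\cap C}$, then pass to the limit with truncations $f^{(k)},g^{(k)}$, using that $f^{(k)}(\mathbb{A})x\to f(\mathbb{A})x$ and $g^{(k)}(\mathbb{A})y\to g(\mathbb{A})y$ in $H$ together with the Cauchy--Schwarz bound you state at the end to control the right-hand side. With that reordering your argument is complete.
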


Theorem \ref{theorem:1} is now applied to derive the asymptotic
order of the eigenvalues of operator $ (-\Delta)_{D}^{\alpha
/2}(I-\Delta_{D} )^{\gamma /2},$ with, as before,  $(-\Delta)_{D}$
representing the Dirichlet negative Laplacian operator on regular
bounded open domain $D.$

\begin{cor}
\label{cor1} The following asymptotic order holds for the
eigenvalues of $(I-\Delta_{D} )^{\gamma /2}(-\Delta_{D})^{\alpha
/2}:$

\begin{equation}
\lim_{k\longrightarrow \infty }\frac{\lambda _{k}\left(
(-\Delta_{D})^{\alpha /2}(I-\Delta_{D} )^{\gamma /2}\right)
}{k^{\alpha +\gamma /n}}=\widetilde{c}(n,\alpha +\gamma
)|D|^{-(\gamma +\alpha )/n}, \label{eqfa2bbtt}
\end{equation}
\noindent where $\widetilde{c}(n,\alpha +\gamma )$ is a positive
constant depending on $n,$ $\alpha $ and $\gamma.$

Futhermore, for $\{\phi_{k}\}_{k\geq 1}$ being the eigenvector
system  of the Dirichlet negative Laplacian operator $(-\Delta_{D}
)$ on domain $D,$ the following equality holds:
\begin{equation}
(-\Delta_{D})^{\alpha /2}(I-\Delta_{D} )^{\gamma /2}\phi_{k}=\lambda
_{k}\left((-\Delta_{D})^{\alpha /2}(I-\Delta_{D} )^{\gamma
/2}\right)\phi_{k},\quad k\geq 1. \label{eqeivectorL}
\end{equation}

\end{cor}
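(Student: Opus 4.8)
The plan is to reduce both assertions to the spectral-calculus identities of Theorems \ref{theorem:1} and \ref{theorem:2} together with the classical Weyl asymptotics for the Dirichlet negative Laplacian. Recall that on $H=L^{2}(D)$ the operator $(-\Delta_{D})$ is positive self-adjoint with compact resolvent, hence has a discrete spectrum $0<\mu_{1}\le\mu_{2}\le\cdots$, $\mu_{k}\to\infty$, and the system $\{\phi_{k}\}_{k\ge 1}$ is a complete orthonormal basis with $(-\Delta_{D})\phi_{k}=\mu_{k}\phi_{k}$. Its spectral family is $E_{\lambda}=\sum_{\mu_{k}\le\lambda}\langle\cdot,\phi_{k}\rangle_{H}\,\phi_{k}$, so that $d(E_{\lambda}\phi_{k},\phi_{k})$ is the Dirac mass at $\mu_{k}$. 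Consequently, for every function $f$ continuous on the spectrum $[\mu_{1},\infty)$ one has $\int_{[\mu_{1},\infty)}|f(\lambda)|^{2}\,d(E_{\lambda}\phi_{k},\phi_{k})=|f(\mu_{k})|^{2}<\infty$, i.e. $\phi_{k}\in D(f(-\Delta_{D}))$, and by (\ref{5}), $f(-\Delta_{D})\phi_{k}=f(\mu_{k})\phi_{k}$.

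I would apply this first with $f(\mu)=\mu^{\alpha/2}$ and with $g(\mu)=(1+\mu)^{\gamma/2}$, both continuous on $[\mu_{1},\infty)$ since $\mu_{1}>0$: each $\phi_{k}$ lies in the domains of $(-\Delta_{D})^{\alpha/2}$ and of $(I-\Delta_{D})^{\gamma/2}$, which act on $\phi_{k}$ as multiplication by $\mu_{k}^{\alpha/2}$ and $(1+\mu_{k})^{\gamma/2}$, respectively. By the composition rule in Theorem \ref{theorem:2} (namely $[g(\mathbb{A})f(\mathbb{A})]x=(g\circ f)(\mathbb{A})x$ with $(g\circ f)(\mu)=g(\mu)f(\mu)$), the operator $(-\Delta_{D})^{\alpha/2}(I-\Delta_{D})^{\gamma/2}$ acts on $\phi_{k}$ as multiplication by $h(\mu_{k})$, where $h(\mu)=\mu^{\alpha/2}(1+\mu)^{\gamma/2}$. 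This establishes (\ref{eqeivectorL}) with
$$\lambda_{k}\big((-\Delta_{D})^{\alpha/2}(I-\Delta_{D})^{\gamma/2}\big)=\mu_{k}^{\alpha/2}(1+\mu_{k})^{\gamma/2},$$
and, since $h$ is strictly increasing on $(0,\infty)$ for $\alpha,\gamma\ge 0$, these are precisely the eigenvalues listed in nondecreasing order.

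For the asymptotic order (\ref{eqfa2bbtt}) I would combine this identity with Weyl's law for the Dirichlet Laplacian on a bounded open domain of finite Lebesgue measure $|D|$,
$$\lim_{k\to\infty}\frac{\mu_{k}}{k^{2/n}}=(2\pi)^{2}\,\omega_{n}^{-2/n}\,|D|^{-2/n}=:C(n)\,|D|^{-2/n},$$
with $\omega_{n}$ the volume of the unit ball in $\mathbb{R}^{n}$. Because $\mu_{k}\to\infty$ we have $(1+\mu_{k})^{\gamma/2}\sim\mu_{k}^{\gamma/2}$, hence $\lambda_{k}\sim\mu_{k}^{(\alpha+\gamma)/2}$, and therefore
$$\lim_{k\to\infty}\frac{\lambda_{k}}{k^{(\alpha+\gamma)/n}}=\Big(\lim_{k\to\infty}\frac{\mu_{k}}{k^{2/n}}\Big)^{(\alpha+\gamma)/2}=C(n)^{(\alpha+\gamma)/2}\,|D|^{-(\alpha+\gamma)/n},$$
which is (\ref{eqfa2bbtt}) with $\widetilde{c}(n,\alpha+\gamma)=C(n)^{(\alpha+\gamma)/2}$.

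I do not expect a genuine obstacle here: the only point requiring care is the spectral-calculus bookkeeping — verifying that $h$ is continuous on $[\mu_{1},\infty)$, that each $\phi_{k}$ belongs to $D(h(-\Delta_{D}))$, and that the product of operators on the left of (\ref{eqeivectorL}) is the composition covered by the last assertion of Theorem \ref{theorem:2}. All the remaining content is the quotation of the Weyl asymptotics and the identification of the constant, for which I would simply cite the standard reference (e.g. via the counting function $N(\lambda)=\#\{k:\mu_{k}\le\lambda\}\sim\omega_{n}(2\pi)^{-n}|D|\,\lambda^{n/2}$ and inversion).
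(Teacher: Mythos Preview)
Your proof is correct and follows essentially the same route as the paper: apply the functional calculus of Theorems \ref{theorem:1}--\ref{theorem:2} with $h(\mu)=\mu^{\alpha/2}(1+\mu)^{\gamma/2}$ to identify $\lambda_{k}=h(\mu_{k})$, and then plug in Weyl's asymptotics for $\mu_{k}$. The only cosmetic difference is that the paper applies (\ref{5}) directly to the single function $h$, whereas you factor $h=g\cdot f$ and invoke the composition rule from Theorem \ref{theorem:2}; your added remarks on domains and on the monotonicity of $h$ (preserving the ordering of eigenvalues) are details the paper leaves implicit.
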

\begin{proof}
It is well-known that the eigenvalues $\{\gamma_{k}(-\Delta_{D}
)\}_{k\geq 1}$ of the Dirichlet negative Laplacian operator on
domain $D\subset \mathbb{R}^{n},$ arranged in decreasing order of
their modulus magnitude satisfy (see, for example,  Chen and  Song
\cite{ChenSong05}):
\begin{equation}
\gamma_{k}(-\Delta_{D} )\sim 4\pi\frac{\left(\Gamma \left( 1+\frac{n}{2}%
\right)\right)^{2/n}}{|D|^{2/n}}k^{2/n},\quad k\longrightarrow
\infty , \label{tseLapac}
\end{equation}
\noindent where $f(k)\sim g(k)$ means that $\lim_{k\rightarrow
\infty}f(k)/g(k)=C,$ for certain positive constant $C.$ In
particular, $C=1$ in (\ref{tseLapac}).

From equation (\ref{5}) in Theorem \ref{theorem:1}, considering
$f(u)= u^{\alpha /2}(1+u)^{\gamma /2},$ we  obtain
\begin{equation} \lambda _{k}\left((-\Delta_{D})^{\alpha
/2}(I-\Delta_{D} )^{\gamma /2}\right)= \left(\gamma_{k}(-\Delta_{D}
)\right)^{\alpha /2}(1+\gamma_{k}(-\Delta_{D} ))^{\gamma /2}.
\label{fs}
\end{equation}

 Equation (\ref{eqfa2bbtt}) then follows from equations (\ref{tseLapac}) and (\ref{fs}).

Equation (\ref{eqeivectorL}) is straightforwardly obtained from
equation (\ref{5}) in Theorem \ref{theorem:1}, since in our case,
i.e., for $f(u)= u^{\alpha /2}(1+u)^{\gamma /2},$ for all $x,y\in
H=L^{2}(D),$ with  $L^{2}(D)$ denoting the space of square
integrable functions on $D,$
\begin{eqnarray}
\int_{\Lambda }f(\lambda )d\left(E_\lambda x,y
\right)&=&\sum_{k=1}^{\infty}\left(\gamma_{k}(-\Delta_{D}
)\right)^{\alpha /2}(1+\gamma_{k}(-\Delta_{D} ))^{\gamma /2} \nonumber\\
& &\times \int_{D\times
D}\phi_{k}(\mathbf{u})\phi_{k}(\mathbf{v})x(\mathbf{u})y(\mathbf{v})d\mathbf{u}d\mathbf{v}
\nonumber\\
&=&\sum_{k=1}^{\infty}\left(\gamma_{k}(-\Delta_{D} )\right)^{\alpha
/2}(1+\gamma_{k}(-\Delta_{D} ))^{\gamma /2}x_{k}y_{k},\nonumber\\
& & \label{ri}
\end{eqnarray}
\noindent with, as before,  $\{\phi_{k}\}_{k\geq 1}$ being the
eigenvector system  of the Dirichlet negative Laplacian operator on
$D.$ Specifically, our spectral family is defined in terms of the
spectral kernel
$\sum_{k=1}^{\infty}\phi_{k}(\mathbf{u})\phi_{k}(\mathbf{v}),$ and
the spectral measure is given by a point or counting measure with
atoms located at the eigenvalues.
 \hfill $\blacksquare$
\end{proof}

\subsection{Mittag-Leffler function}
\label{MLF} The weak-sense solution derived in the next section
involves the Mittag-Leffler function. The  definition  of  the
Mittag-Leffler function, and a two-sided uniform inequality are now
considered.
\begin{definition}
\label{lem0} The Mittag-Leffler function is given by
\begin{equation}
E_{\beta }(\mathbf{z})=\sum_{j=0}^{\infty
}\frac{(\mathbf{z})^{j}}{\Gamma (j\beta+1)},\quad
\mathbf{z}\in\mathbb{C},\quad 0<\beta \leq 1 \label{mlf}
\end{equation}

\noindent (see Erd\'elyi \emph{et al.} \cite{Erdely55}; Haubold,
 Mathai and  Saxena \cite{Haubold11}, for a more detailed
description of this function and its properties).
\end{definition}

\begin{lem}
\label{th4s} For every $\beta \in (0,1),$ the uniform estimate
\begin{equation*}
\frac{1}{1+\Gamma (1-\beta )x}\leq E_{\beta }(-x)\leq
\frac{1}{1+[\Gamma (1+\beta )]^{-1}x}
\end{equation*}%
\noindent holds over $\mathbb{R}_{+}$ with optimal constants (see
Simon \cite{Simon14}, Theorem 4).

\end{lem}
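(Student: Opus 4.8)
The plan is to obtain both estimates from a single convexity property of the reciprocal $g(x):=1/E_\beta(-x)$, $x\ge 0$. It is classical that, for $\beta\in(0,1)$, the map $x\mapsto E_\beta(-x)$ is completely monotone on $[0,\infty)$; in particular it is strictly positive and real-analytic there, so $g$ is a well-defined, smooth, positive function. Two elementary expansions pin down the two constants. From the power series (\ref{mlf}) one has $E_\beta(-x)=1-x/\Gamma(1+\beta)+x^{2}/\Gamma(1+2\beta)-\cdots$ as $x\downarrow 0$, whence $g(0)=1$ and $g'(0)=1/\Gamma(1+\beta)$. From the standard asymptotic expansion of the Mittag--Leffler function (Erd\'elyi \emph{et al.} \cite{Erdely55}; Haubold, Mathai and Saxena \cite{Haubold11}), $E_\beta(-x)=1/(\Gamma(1-\beta)x)+O(x^{-2})$ as $x\to\infty$, whence $g(x)=\Gamma(1-\beta)\,x+O(1)$. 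These two expansions already prove the optimality asserted in the statement: near $0$ the upper bound forces a constant $\le 1/\Gamma(1+\beta)$, and near $\infty$ the lower bound forces a constant $\ge\Gamma(1-\beta)$.

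Granting that $g$ is convex on $[0,\infty)$, the two estimates follow immediately. Since a convex function lies above each of its tangent lines, $g(x)\ge g(0)+g'(0)x=1+x/\Gamma(1+\beta)$, and inverting gives $E_\beta(-x)\le (1+\Gamma(1+\beta)^{-1}x)^{-1}$. For the other direction, set $G(x):=g(x)-\Gamma(1-\beta)\,x$, which is convex (a convex function minus an affine one) and, by the $x\to\infty$ expansion above, bounded on $[0,\infty)$. A convex function that is bounded above on a half-line is nonincreasing, so $G(x)\le G(0)=1$, i.e. $g(x)\le 1+\Gamma(1-\beta)\,x$, i.e. $E_\beta(-x)\ge (1+\Gamma(1-\beta)x)^{-1}$. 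Thus the whole lemma reduces to the convexity of $g$ on $\mathbb R_+$.

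The convexity of $g=1/E_\beta(-\cdot)$ is the heart of the matter and the step I expect to be the main obstacle. Complete monotonicity by itself is not enough: writing $\phi:=E_\beta(-\cdot)$ one computes $g''=\phi^{-3}\bigl(2(\phi')^{2}-\phi\,\phi''\bigr)$, so $g''\ge 0$ is equivalent to $2(\phi')^{2}\ge\phi\,\phi''$, whereas complete monotonicity (Cauchy--Schwarz against the representing measure) only yields $(\phi')^{2}\le\phi\,\phi''$, i.e. log-convexity of $\phi$, which is the reverse inequality. One must therefore exploit the finer structure of the Mittag--Leffler function. The route I would take is to use its representation as the Laplace transform of a probability density, $E_\beta(-x)=\int_{0}^{\infty}e^{-xu}M_\beta(u)\,du$, where $M_\beta$ is the M--Wright (Mainardi) density, equivalently the density of $W_\beta=Y_\beta^{-\beta}$ with $Y_\beta$ the standard positive $\beta$-stable variable; under this representation the inequality $2(\phi')^{2}\ge\phi\,\phi''$ becomes the statement that every exponentially tilted version of $M_\beta$ has squared coefficient of variation at most one, and this can be reduced, via the change of variables $u=s^{-\beta}$ linking $M_\beta$ to the one-sided stable density, to an inequality for the stable density and its tiltings. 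An equivalent, more transparent phrasing: since $\mathbb E[W_\beta]=1/\Gamma(1+\beta)$, the upper bound is exactly the assertion that $\Gamma(1+\beta)W_\beta$ is dominated in the convex order by a standard exponential law, reflecting that the Mittag--Leffler family degenerates to a unit point mass as $\beta\uparrow 1$ and to the unit-mean exponential as $\beta\downarrow 0$. Establishing this convex-order (equivalently, tilted-dispersion) comparison is the only genuinely nontrivial ingredient; the remaining steps are the bookkeeping indicated above.
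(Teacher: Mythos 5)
Your argument is a reduction, not a proof: everything hinges on the convexity of $g(x)=1/E_{\beta }(-x)$ on $\mathbb{R}_{+}$ (equivalently, the inequality $2(E_{\beta }'(-x))^{2}\geq E_{\beta }(-x)E_{\beta }''(-x)$, i.e.\ that every exponential tilt of the M--Wright density has squared coefficient of variation at most one), and this is exactly the step you leave unproven, offering only a sketched route through the one-sided stable representation. As you yourself note, complete monotonicity gives the \emph{reverse} inequality (log-convexity of $E_{\beta }(-\cdot)$), so no soft argument closes the gap; the convexity claim is the entire content of the lemma, and the surrounding steps you do carry out (the series expansion at $0$, the asymptotics $E_{\beta }(-x)\sim 1/(\Gamma (1-\beta )x)$, the tangent-line and ``convex and bounded above, hence nonincreasing'' arguments, and the optimality of the two constants) are routine bookkeeping once that claim is granted. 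There is also a smaller slip in the final paragraph: the upper bound is \emph{implied by}, but not equivalent to, the convex-order domination of $\Gamma (1+\beta )W_{\beta }$ by a standard exponential, and that convex-order statement by itself would not deliver the lower bound; what your scheme actually requires is the tilted-variance inequality above, which is strictly what you have not established.

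For comparison, the paper does not prove the lemma at all: it quotes it from Simon (Theorem 4 of \cite{Simon14}), where the two-sided bound with optimal constants is obtained by stochastic comparisons between positive $\beta$-stable and Fr\'echet (and exponential) laws via the representation $E_{\beta }(-x)=\mathbb{E}\bigl[\exp \bigl(-x\,\mathbf{Z}_{\beta }^{-\beta }\bigr)\bigr]$, rather than by convexity of the reciprocal. So your proposed route is genuinely different in outline, but as it stands it would only be acceptable if you either proved the convexity of $1/E_{\beta }(-\cdot)$ (or the equivalent tilted-dispersion inequality) in detail, or replaced that step by an appeal to the literature -- at which point you may as well cite Simon's theorem directly, as the paper does.
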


\subsection{Fractional Sobolev spaces on regular bounded open domains}
\label{fssrbod} The scale of fractional Sobolev spaces is introduced
within the
 spaces $\mathcal{S\,}\left( \mathbb{R}%
^{n}\right) ,$ the space of $C^{\infty }$-functions
with rapid decay at infinity, and $\mathcal{D}\left( \mathbb{R}%
^{n}\right),$ the space of $C^{\infty }$-functions with compact
support contained in $\mathbb{R}^{n}.$ The dual of these spaces are
respectively the
space of tempered distributions, $\mathcal{S}^{\prime }\left( \mathbb{R}%
^{n}\right) $, and the space of distributions, $\mathcal{D}^{\prime
}\left( \mathbb{R}^{n}\right).$

For $s\in \mathbb{R}$, we denote by $H^{s}\left(
\mathbb{R}^{n}\right) $ the space of tempered distributions $u$ such
that $\left( 1+\|\boldsymbol{\lambda
}\|^{2}\right)^{s/2}\widehat{u}\in L_{2}\left( \mathbb{R}^{n}\right)
,~\boldsymbol{\lambda }\in \mathbb{R}^{n}.$ For a regular bounded
open domain $D$ in $\mathbb{R}^{n}$, we denote
\begin{equation}
\overline{H}^{s}\left( D\right) =\left\{ u\in H^{s}\left( \mathbb{R}%
^{n}\right) :\mathrm{supp}\,\,u\subseteq
\overline{D}\right\},\label{eqsob1}
\end{equation}
\begin{equation}
H^{s}\left( D\right) =\left\{ f\in \mathcal{D}^{\prime }\left(
D\right) :\exists F\in H^{s}\left( \mathbb{R}^{n}\right) \text{ such
that }f=F_{D}\right\}, \label{eqfss}
\end{equation}%
\noindent where $F_{D}$ denotes the restriction of $F$ to $D$. With
the quotient norm
\begin{equation*}
\left\Vert f\right\Vert _{H^{s}\left( D\right) }=\underset{\left\{
F;F_{D}=f\right\} }{\inf }\,\left\Vert F\right\Vert _{H^{s}\left( \mathbb{R}%
^{n}\right) },
\end{equation*}%
$H^{s}\left( D\right) $ is a Hilbert space (see Dautray and Lions,
\cite{Dautray90}, p. 118).

\section{The mean-square Gaussian solution in the weak sense}
\label{wmssol}

\setcounter{section}{3}

The preliminaries given in the previous section are now applied in
the derivation of a zero-mean Gaussian solution to the stochastic
boundary value problem  (\ref{1.2})--(\ref{1.2b}), in the
mean-square and weak senses. The following result first establishes
the suitable range of parameter $\alpha $ and $\gamma $ for the
construction of a Green operator in the  trace  class,  with kernel,
the fundamental solution to the deterministic problem corresponding
to (\ref{1.2})--(\ref{1.2b}). Namely, the following proposition
states the  ranges of parameters $\alpha $ and $\gamma $ such that
the sequence
\begin{equation}\left\{E_{\beta }\left( -\lambda _{k}\left( (-\Delta_{D} )^{\alpha
/2}(I-\Delta_{D})^{\gamma /2}\right) t^{\beta }\right),\ k\geq
1\right\}\label{eimlf} \end{equation}\noindent  is in the space
$l^{1}$ of absolute summable sequences, for every $t>0.$
\begin{prop}
\label{pr1} For $n<\alpha +\gamma ,$ \begin{equation}
\sum_{k=1}^{\infty }E_{\beta }\left( -\lambda _{k}\left(
(-\Delta_{D} )^{\alpha /2}(I-\Delta_{D} )^{\gamma /2}\right)
t^{\beta }\right) <\infty ,  \label{tracep0}
\end{equation}

\noindent  for every $t>0.$
\end{prop}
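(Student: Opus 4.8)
The plan is to combine the asymptotic order of the eigenvalues $\lambda_k := \lambda_k\bigl((-\Delta_D)^{\alpha/2}(I-\Delta_D)^{\gamma/2}\bigr)$ obtained in Corollary \ref{cor1} with the sharp upper bound for the Mittag-Leffler function from Lemma \ref{th4s}, reducing the convergence of the series \eqref{tracep0} to the convergence of a $p$-series. First I would fix $t>0$ and recall from \eqref{eqfa2bbtt} that
\begin{equation*}
\lambda_k \sim \widetilde{c}(n,\alpha+\gamma)\,|D|^{-(\alpha+\gamma)/n}\,k^{(\alpha+\gamma)/n}\quad\text{as }k\to\infty,
\end{equation*}
so in particular there exist a constant $c_0>0$ and an index $k_0$ such that $\lambda_k \geq c_0\, k^{(\alpha+\gamma)/n}$ for all $k\geq k_0$.

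Next I would apply the right-hand inequality of Lemma \ref{th4s} with $x=\lambda_k t^{\beta}>0$, which gives
\begin{equation*}
E_{\beta}\bigl(-\lambda_k t^{\beta}\bigr)\leq \frac{1}{1+[\Gamma(1+\beta)]^{-1}\lambda_k t^{\beta}}\leq \frac{\Gamma(1+\beta)}{\lambda_k\, t^{\beta}}\leq \frac{\Gamma(1+\beta)}{c_0\, t^{\beta}}\,k^{-(\alpha+\gamma)/n}
\end{equation*}
for all $k\geq k_0$. Since by hypothesis $\alpha+\gamma>n$, the exponent $(\alpha+\gamma)/n>1$, and therefore $\sum_{k\geq k_0}k^{-(\alpha+\gamma)/n}<\infty$. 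The finitely many remaining terms $E_{\beta}(-\lambda_k t^{\beta})$ for $1\leq k<k_0$ are each bounded (indeed $E_{\beta}(-x)\in(0,1]$ for $x\geq 0$ by Lemma \ref{th4s}, since $1/(1+\Gamma(1-\beta)x)\leq E_\beta(-x)\leq 1$), so they contribute a finite amount. Summing the two pieces yields \eqref{tracep0} for every $t>0$; one may also note the bound is uniform on compact subsets of $(0,\infty)$ and blows up like $t^{-\beta}$ as $t\downarrow 0$, which is consistent with the trace-class claim for each fixed positive time.

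I do not anticipate a genuine obstacle here: the statement is essentially a bookkeeping consequence of two quoted results. The only point requiring a little care is making sure that the eigenvalues $\lambda_k$ are indexed so that \eqref{eqfa2bbtt} applies (increasing order of magnitude, matching the convention in Corollary \ref{cor1} and \eqref{tseLapac}), and that the $O(\cdot)$ in the asymptotics is converted to an honest lower bound $\lambda_k\geq c_0 k^{(\alpha+\gamma)/n}$ valid for all large $k$ — this is legitimate precisely because the limit in \eqref{eqfa2bbtt} is a strictly positive constant. Everything else is the elementary comparison with the convergent $p$-series $\sum k^{-(\alpha+\gamma)/n}$.
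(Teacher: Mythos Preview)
Your argument is correct and uses the same two inputs as the paper (the eigenvalue asymptotics of Corollary~\ref{cor1} and the Mittag--Leffler upper bound of Lemma~\ref{th4s}), with the same finite-plus-tail split. The only difference is in how the tail is controlled: you bound $1/(1+[\Gamma(1+\beta)]^{-1}\lambda_k t^\beta)$ crudely by $\Gamma(1+\beta)/(\lambda_k t^\beta)$ and compare directly with the convergent $p$-series $\sum k^{-(\alpha+\gamma)/n}$, whereas the paper passes to the integral $\int_0^\infty E_\beta(-t^\beta L_1 x^{(\alpha+\gamma)/n})\,dx$, performs the substitution $u = t^\beta L_1 x^{(\alpha+\gamma)/n}$, and then checks that $\int_0^\infty u^{n/(\alpha+\gamma)-1}/(1+[\Gamma(1+\beta)]^{-1}u)\,du<\infty$. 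Your route is shorter and entirely sufficient for the statement of Proposition~\ref{pr1}; the paper's detour, however, yields the sharper $t$-dependence $t^{-\beta n/(\alpha+\gamma)}$ rather than your $t^{-\beta}$, and that refined exponent is exactly what is reused in the proof of Theorem~\ref{prtqv} to obtain the H\"older exponent $1-\beta n/(\alpha+\gamma)$ there. So nothing is wrong with your proof of this proposition, but be aware that the paper's version is doing a little extra work that pays off downstream.
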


\begin{proof}
From  equation (\ref{eqfa2bbtt}),
\begin{equation}\lim_{k\longrightarrow \infty }\frac{\lambda_{k}\left((-\Delta_{D}
)^{\alpha /2}(I-\Delta _{D})^{\gamma /2}\right)}{k^{(\alpha+\gamma
)/n}}=\widetilde{c}(n,\alpha +\gamma )|D |^{-(\alpha +\gamma )
/n}.\label{jao}
\end{equation}
Therefore, there exists $k_{0}$ such that for $k\geq k_{0},$
\begin{equation}L_{1}k^{(\alpha+\gamma )/n}\leq \lambda_{k}\left((-\Delta_{D} )^{\alpha /2}(I-\Delta_{D} )^{\gamma
/2}\right)\leq L_{2} k^{(\alpha+\gamma )/n},\label{ipp}
\end{equation} \noindent for
certain positive constants $0<L_{1}<L_{2},$ depending on $k_{0},$
and $\alpha +\gamma $ and $n.$  In particular, for $k\geq k_{0},$
\begin{equation}\frac{1}{1+[\Gamma (1+\beta
)]^{-1}\lambda_{k}\left((-\Delta_{D})^{\alpha /2}(I-\Delta_{D}
)^{\gamma /2}\right)t^{\beta }}\leq \frac{1}{1+[\Gamma (1+\beta
)]^{-1} L_{1}k^{(\alpha+\gamma )/n}t^{\beta }}. \label{eivaso3}
\end{equation}

Now, from equations (\ref{ipp}) and (\ref{eivaso3}), applying  Lemma \ref{th4s}, for each fixed $t>0,$

\begin{eqnarray}
& & \sum_{k=1}^{\infty }E_{\beta }\left(-t^{\beta
}\lambda_{k}\left((-\Delta_{D})^{\alpha /2}(I-\Delta_{D} )^{\gamma
/2}\right)\right)\nonumber\\
& &=\sum_{k=1}^{k_{0}}E_{\beta }\left(-t^{\beta
}\lambda_{k}\left((-\Delta _{D})^{\alpha /2}(I-\Delta_{D} )^{\gamma
/2}\right)\right)\nonumber\\
& & +\sum_{k=k_{0}+1}^{\infty }E_{\beta }\left(-t^{\beta
}\lambda_{k}\left((-\Delta_{D} )^{\alpha
/2}(I-\Delta_{D})^{\gamma /2}\right)\right)\nonumber\\
& &= M(\beta,\alpha,\gamma,n)+\sum_{k=k_{0}+1}^{\infty }E_{\beta
}\left(-t^{\beta }\lambda_{k}\left((-\Delta_{D} )^{\alpha
/2}(I-\Delta_{D} )^{\gamma /2}\right)\right)\nonumber\\
& & \leq
 M(\beta,\alpha,\gamma,n)+\sum_{k=k_{0}+1}^{\infty }E_{\beta
}\left(-t^{\beta }L_{1}k^{(\alpha +\gamma )/n}\right)
\nonumber\\
& &\leq M(\beta,\alpha,\gamma,n)+\int_{0}^{\infty} E_{\beta
}\left(-t^{\beta }L_{1}x^{(\alpha +\gamma
)/n}\right)dx\nonumber\\
& &= M(\beta,\alpha,\gamma,n)+\frac{t^{-\beta n/(\alpha
+\gamma)}}{(\alpha +\gamma)/n} \int_{0}^{\infty}E_{\beta
}\left(u\right)u^{\frac{n}{\alpha +\gamma }-1}du\nonumber\\
& &\leq M(\beta,\alpha,\gamma,n)+\frac{t^{-\beta n/(\alpha
+\gamma)}}{(\alpha
+\gamma)/n}\int_{0}^{\infty}\frac{u^{\frac{n}{\alpha +\gamma
}-1}}{1+[\Gamma (1+\beta )]^{-1}u}du<\infty ,\nonumber\\
 \label{pfpjj}
\end{eqnarray}

\noindent since
\begin{equation}M(\beta,\alpha,\gamma,n)=\sum_{k=1}^{k_{0}}E_{\beta
}\left(-t^{\beta }\lambda_{k}\left((-\Delta_{D} )^{\alpha
/2}(I-\Delta_{D} )^{\gamma
/2}\right)\right)<\infty,\label{eqp1bb}\end{equation} \noindent and
$\int_{0}^{\infty}\frac{u^{\frac{n}{\alpha +\gamma }-1}}{1+[\Gamma
(1+\beta )]^{-1}u}du<\infty,$ for $\alpha +\gamma
>n.$

\hfill $\blacksquare$
\end{proof}

\bigskip

A mean-square  Gaussian solution, in the weak sense,  to equations
(\ref{1.2})--(\ref{1.2b}) is formulated in Proposition \ref{pr2},
considering $D$ to be a Dirichlet-regular bounded open domain. Note
that, in the classical theory of  boundary value problems, given an
open set $D$ with compact closure $\overline{D}$ in
$\mathbb{R}^{n},$ the classical Dirichlet problem consists of the
extension of  a given continuous function $\psi:
\partial D\longrightarrow \mathbb{R}$   to a continuous function
$\phi : \overline{D} \longrightarrow \mathbb{R}$ such that $\phi $
is harmonic, that is, satisfies the Laplace equation in $D.$ The set
$D$ is termed regular if the Dirichlet problem has a (necessarily
unique) solution for any continuous boundary function $\psi.$ For
example, every simply connected planar domain is regular, but may
have a \emph{bad} boundary, for instance, a fractal boundary (see
Arendt and Schleich \cite{ArendtSchleich09}, pp. 54-55; Fuglede
\cite{Fuglede05}).
  Dirichlet
regularity implies that all the eigenfunctions of the Dirichlet
Laplacian operator on $D$ are  bounded continuous functions on this
domain that vanish continuously on the boundary. This fact will be
exploited in the examples given in Section \ref{examples}, according
to the conditions required on the eigenfunctions, in the derivation
of the main results of this paper.

In a more general setting,   we consider the following definition of
Dirichlet-regular bounded open domain (see, for example, Brelot
\cite{Brelot60},  p. 137 and Theorem 32, and Fuglede
\cite{Fuglede05}, p. 253).
\begin{definition}
\label{def00}
 For a
connected bounded open domain $D$ with boundary $\partial D$ we say
that $\mathbf{x}_{0}\in
\partial D$ is regular if and only if it has a Green kernel $G^{D}$
such that, for each $\mathbf{x}\in D,$
\begin{equation}
\lim_{\mathbf{x}\rightarrow
\mathbf{x}_{0}}G^{D}(\mathbf{x},\mathbf{y})=0,\quad \forall
\mathbf{y}\in D. \label{eqrddgs}
\end{equation}

The set $D$ is regular if every point of $\partial D$ is regular.
\end{definition}

\noindent See also Chen \emph{et al.} \cite{ChenMeerschaert12} for
alternative characterizations of Dirichlet-regular bounded open
domains in terms of the first exit time in the context of
subordinate processes.

The following result provides a mean-square  zero-mean Gaussian
solution, in the weak sense, to the stochastic pseudodifferential
boundary value problem (\ref{1.2})--(\ref{1.2b})  on a
Dirichlet-regular bounded open domain $D.$
\begin{prop}
\label{pr2} Let $c$ be defined as
\begin{equation}
c(t,\mathbf{x})=\int_{0}^{t}\int_{D}G^{D}(t,\mathbf{x};s,\mathbf{y})\varepsilon
(s,\mathbf{y})dsd\mathbf{y},  \label{ris}
\end{equation}
\noindent where $\varepsilon (s,\mathbf{y})$ is space-time zero-mean
Gaussian white noise as given in equation (\ref{1.2}), and

\begin{eqnarray}
&&G^{D}(t,\mathbf{x};s,\mathbf{y})=\nonumber\\
&& =\sum_{k\geq 1}E_{\beta }\left( -\lambda _{k}\left( (-\Delta_{D}
)^{\alpha /2}(I-\Delta_{D} )^{\gamma /2}\right) (t-s)^{\beta
}\right) \phi _{k}(\mathbf{x})\phi _{k}(\mathbf{y}),\ t\geq
s  \nonumber \\
&&G^{D}(t,\mathbf{x};s,\mathbf{y})=0,\ s>t,  \label{sr}
\end{eqnarray}
\noindent with, as before, for each $k\geq 1$ (see Corollary
\ref{cor1})
\begin{equation}
(-\Delta_{D} )^{\alpha /2}(I-\Delta_{D})^{\gamma /2}\phi
_{k}=\lambda _{k}\left( (-\Delta_{D} )^{\alpha /2}(I-\Delta_{D}
)^{\gamma /2}\right) \phi _{k}.  \label{se}
\end{equation}
\noindent Assume that $\{\phi _{k}\{_{k\geq 1}$  are uniformly
bounded by a constant $C(D),$ depending of the geometrical
characteristics of the domain $D,$ i.e., $$C(D)=\sup_{k\geq 1,\
\mathbf{x}\in D}\phi_{k}(\mathbf{x}).$$  Then, for $n<\alpha
+\gamma,$ $c$ in (\ref{ris}) provides a mean-square zero-mean
Gaussian solution to problem (\ref{1.2})--(\ref{1.2b}) on  $D,$ in
the weak-sense in the space $\overline{H}^{\alpha +\gamma }(D).$
Equivalently,
\begin{eqnarray}
& &\int_{D}\left[\frac{\partial ^{\beta }}{\partial t^{\beta
}}c\left( t,\mathbf{x}\right) +\left( -\Delta_{D} \right) ^{\alpha
/2}\left( I-\Delta_{D} \right) ^{\gamma /2}c\left(
t,\mathbf{x}\right)\right]
\psi(\mathbf{x})d\mathbf{x}\nonumber\\
& &  \underset{\mbox{m.s.}}{=} \int_{D}I^{1-\beta }_{t}\varepsilon
\left( t,\mathbf{x}\right)\psi(\mathbf{x})d\mathbf{x},\quad \forall
 \psi \in \overline{H}^{\alpha +\gamma }(D),
\label{wssol}
\end{eqnarray}
\noindent where $\underset{\mbox{m.s.}}{=}$ means the equality in the mean-square sense.
In addition, $c$ has covariance kernel given by, for all $t,s\in \mathbb{R}_{+},$ and $\mathbf{x},\mathbf{y}\in D,$
\begin{equation}
R(t,\mathbf{x};s,\mathbf{y})=E[c(t,\mathbf{x})c(s,%
\mathbf{y})]=\int_{0}^{t\wedge s}\int_{D}G^{D}(t,\mathbf{x};u,%
\mathbf{z})G^{D}(s,\mathbf{y};u,\mathbf{z})dud\mathbf{z}.
\label{covop}
\end{equation}
\end{prop}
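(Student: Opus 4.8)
The plan is to verify three things in turn: first, that the series \eqref{ris}--\eqref{sr} defines a genuine mean-square limit (i.e.\ $c(t,\mathbf{x})$ exists in $L^2(\Omega)$ for each fixed $t,\mathbf{x}$); second, that it satisfies the weak formulation \eqref{wssol}; and third, that its covariance is \eqref{covop}. For the first step I would expand the stochastic integral \eqref{ris} against the orthonormal system $\{\phi_k\}$ of the Dirichlet Laplacian. Writing $\varepsilon$ formally as $\sum_k \varepsilon_k(s)\phi_k(\mathbf{y})$ with $\{\varepsilon_k\}$ independent spatial white noises in time, one gets $c(t,\mathbf{x}) \underset{\mbox{m.s.}}{=} \sum_{k\geq 1}\phi_k(\mathbf{x})\int_0^t E_\beta\!\left(-\lambda_k(t-s)^\beta\right)\varepsilon_k(s)\,ds$, where I abbreviate $\lambda_k = \lambda_k((-\Delta_D)^{\alpha/2}(I-\Delta_D)^{\gamma/2})$. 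The variance of the $k$-th term is $\phi_k^2(\mathbf{x})\int_0^t E_\beta^2(-\lambda_k(t-s)^\beta)\,ds$, and using the uniform bound $|\phi_k(\mathbf{x})|\leq C(D)$ together with the substitution $r = t-s$ and the crude estimate $E_\beta(-x)\leq 1$ on $\mathbb{R}_+$, this is bounded by $C(D)^2\int_0^t E_\beta(-\lambda_k r^\beta)\,dr$. A further change of variable $u = \lambda_k r^\beta$ and Lemma~\ref{th4s} give a bound of the form $\mathrm{const}\cdot\lambda_k^{-1/\beta}$; summability of $\sum_k \lambda_k^{-1/\beta}$ follows from the asymptotics $\lambda_k \sim \widetilde{c}\,|D|^{-(\alpha+\gamma)/n}k^{(\alpha+\gamma)/n}$ in \eqref{eqfa2bbtt} once $(\alpha+\gamma)/(n\beta) > 1$; but in fact one can argue more cheaply by noting that $\int_0^t E_\beta^2(-\lambda_k(t-s)^\beta)\,ds \leq t\, E_\beta(-\lambda_k\cdot 0^\beta)$ is not quite right, so I would instead directly invoke the computation already carried out in Proposition~\ref{pr1}, which shows $\sum_k E_\beta(-\lambda_k t^\beta)<\infty$; a Fubini-type rearrangement of $\int_0^t\sum_k E_\beta^2(\cdots)\,ds$ reduces exactly to an integral of that summable expression, so the orthogonal series converges in $L^2(\Omega\times D)$ uniformly on compact time intervals. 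This also shows $c(t,\cdot)$ lives in $\overline{H}^{\alpha+\gamma}(D)$ almost surely in the required sense, since applying $(-\Delta_D)^{\alpha/2}(I-\Delta_D)^{\gamma/2}$ multiplies the $k$-th coefficient by $\lambda_k$ and the relevant series still converges by the same $l^1$-type estimate.

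For the weak formulation \eqref{wssol}, I would test \eqref{ris} against $\psi\in\overline{H}^{\alpha+\gamma}(D)$, expand $\psi = \sum_k \psi_k\phi_k$, and use the fact that $G^D(t,\mathbf{x};s,\mathbf{y})$ as a function of $\mathbf{x}$ is, mode by mode, the Green's function of the deterministic Caputo problem. Concretely, the function $g_k(t) = E_\beta(-\lambda_k t^\beta)$ is the classical solution of $\partial_t^\beta g_k = -\lambda_k g_k$, $g_k(0)=1$, so the convolution $h_k(t) = \int_0^t g_k(t-s)\,d W_k(s)$ (with $W_k$ the Brownian motion driving the $k$-th mode) satisfies, in the mean-square sense, $\partial_t^\beta h_k(t) = -\lambda_k h_k(t) + I_t^{1-\beta}\dot W_k(t)$ — this is precisely the scalar analogue of \eqref{1.2}, and is where the fractional-integrated noise $I_t^{1-\beta}\varepsilon$ on the right-hand side is needed to cancel the singular initial-layer term in the Caputo derivative \eqref{1.3}. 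Multiplying by $\phi_k(\mathbf{x})$, summing against $\psi_k$, and integrating over $D$ using orthonormality of $\{\phi_k\}$ gives \eqref{wssol} directly. The interchange of $\partial_t^\beta$, $(-\Delta_D)^{\alpha/2}(I-\Delta_D)^{\gamma/2}$, and the infinite sum is justified by the uniform (in $t$ on compacta) $L^2$-convergence established in the first step, together with Theorem~\ref{theorem:1} and Theorem~\ref{theorem:2} which license applying the operator termwise to the eigenfunction expansion.

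Finally, \eqref{covop} follows from the Itô isometry for the space-time white noise $\varepsilon$: since $E[\varepsilon(s,\mathbf{y})\varepsilon(u,\mathbf{z})] = \delta(s-u)\delta(\mathbf{y}-\mathbf{z})$, we get $E[c(t,\mathbf{x})c(s,\mathbf{y})] = \int_0^{t\wedge s}\int_D G^D(t,\mathbf{x};u,\mathbf{z})G^D(s,\mathbf{y};u,\mathbf{z})\,d\mathbf{z}\,du$, the upper limit $t\wedge s$ coming from the support conditions $G^D(t,\cdot;u,\cdot)=0$ for $u>t$. Gaussianity of $c$ is inherited from that of $\varepsilon$ because $c$ is an $L^2$-limit of linear (stochastic-integral) functionals of $\varepsilon$. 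I expect the main obstacle to be the rigorous justification of step two — namely that the scalar mode equation $\partial_t^\beta h_k = -\lambda_k h_k + I_t^{1-\beta}\dot W_k$ holds in the mean-square sense with the Caputo derivative interpreted as in \eqref{1.3}, including the correct treatment of the $-u(0,\mathbf{x})/t^\beta$ term and the exchange of the mean-square (stochastic) integration with the fractional operators; the convergence bookkeeping in step one, by contrast, is essentially a rerun of the estimate already done in Proposition~\ref{pr1}.
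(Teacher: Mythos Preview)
Your proposal is correct and follows essentially the same strategy as the paper: expand in the Dirichlet eigenbasis, use that $E_\beta(-\lambda_k t^\beta)$ solves the Caputo eigenvalue equation, and read off the covariance from the white-noise isometry. The one substantive difference is in step two. You reduce to the scalar mode identity $\partial_t^\beta h_k = -\lambda_k h_k + I_t^{1-\beta}\dot W_k$ and flag its mean-square justification as the main gap; the paper instead computes $\int_D \partial_t^\beta c(t,\mathbf{x})\psi(\mathbf{x})\,d\mathbf{x}$ directly from the definition \eqref{1.3}, substitutes $u=t-\tau$, differentiates under the integral (Leibniz), and uses the equal-time identity $G^D(u,\mathbf{x};u,\mathbf{y})=\sum_k\phi_k(\mathbf{x})\phi_k(\mathbf{y})=\delta(\mathbf{x}-\mathbf{y})$ to produce the $I_t^{1-\beta}\varepsilon$ term explicitly. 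That computation is exactly the content of your asserted scalar identity, so the two arguments coincide once unpacked; the paper's version has the advantage of making the mechanism that generates the fractional-integrated noise visible, while yours is cleaner to state. Your additional step one (explicit $L^2(\Omega)$ convergence of the series) is something the paper handles only implicitly through the Cauchy--Schwarz bound it uses to justify the termwise derivative.
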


\begin{proof}
It is well-known that the solution to the eigenvalue equation
\begin{equation}
\frac{d^{\beta }}{dt^{\beta }}T(t) =-\mu T(t),\quad 0<t\leq T,
 \label{eq2}
\end{equation}
\noindent is given by  the Mittag-Leffler function $E_{\beta }(-\mu
t^{\beta }),$  for any $\mu >0,$  with $E_{\beta }$ being introduced
in equation  (\ref{mlf}). Then, for $\beta \in (0,1),$
  from definition of $G^{D}$ in equations (\ref{sr})--(\ref{se}),
and the definition of the regularized fractional derivative in time
(\ref{1.3}),

\begin{eqnarray}
&& \int_{D}\frac{\partial ^{\beta }}{\partial t^{\beta }}G^{D}\left(
t,\mathbf{x};0,\mathbf{y}\right)\psi(\mathbf{y})d\mathbf{y}=-\int_{D}\sum_{k=1}^{\infty}
\lambda_{k}\left( (-\Delta_{D} )^{\alpha /2}(I-\Delta_{D} )^{\gamma
/2}\right)
\nonumber\\
& & \hspace*{1.5cm}\times  E_{\beta }\left( - \lambda_{k}\left(
(-\Delta_{D} )^{\alpha /2}(I-\Delta_{D} )^{\gamma /2}\right)t^{\beta
}\right) \phi _{k}(\mathbf{x})
\phi_{k}(\mathbf{y})\psi(\mathbf{y})d\mathbf{y}\nonumber\\
& &=-\sum_{k=1}^{\infty} \lambda_{k}\left( (-\Delta_{D} )^{\alpha
/2}(I-\Delta_{D} )^{\gamma /2}\right)E_{\beta }\left( -
\lambda_{k}\left( (-\Delta _{D})^{\alpha /2}(I-\Delta_{D})^{\gamma
/2}\right)t^{\beta }\right)
\nonumber\\
& & \hspace*{2cm}\times \phi
_{k}(\mathbf{x})\int_{D}\phi_{k}(\mathbf{y})\psi(\mathbf{y})d\mathbf{y}
\nonumber\\
& & = -(-\Delta_{D} )^{\alpha /2}(I-\Delta_{D} )^{\gamma
/2}\sum_{k=1}^{\infty} E_{\beta }\left( - \lambda_{k}\left(
(-\Delta_{D} )^{\alpha /2}(I-\Delta _{D})^{\gamma /2}\right)t^{\beta
}\right)\nonumber\\
& & \hspace*{2cm}\times
 \phi_{k}(\mathbf{x}) \psi_{k}=-(-\Delta_{D} )^{\alpha /2}(I-\Delta_{D} )^{\gamma
/2}\mathcal{G}^{D}_{t}\left(\psi \right), \label{vsfe}
\end{eqnarray}
\noindent where $\mathcal{G}^{D}_{t}$ denotes the integral operator
on $L^{2}(D)$ with kernel  $G^{D}\left(
t,\mathbf{x};0,\mathbf{y}\right),$ for each $t>0.$ Note that, from triangle and Cauchy–-Schwarz inequalities,
\begin{eqnarray}
& & \left|\sum_{k=1}^{\infty} \frac{\partial ^{\beta }}{\partial t^{\beta
}} E_{\beta }\left( - \lambda_{k}\left( (-\Delta_{D} )^{\alpha
/2}(I-\Delta_{D})^{\gamma /2}\right)t^{\beta }\right) \phi
_{k}(\mathbf{x})\psi_{k}\right|\nonumber\\ & & \leq  \sum_{k=1}^{\infty}
\lambda_{k}\left( (-\Delta_{D} )^{\alpha /2}(I-\Delta_{D} )^{\gamma
/2}\right)\nonumber\\
& & \hspace*{1.5cm}\times E_{\beta }\left( - \lambda_{k}\left(
(-\Delta_{D} )^{\alpha /2}(I-\Delta _{D})^{\gamma /2}\right)t^{\beta
}\right) |\phi _{k}(\mathbf{x})|\psi_{k}\nonumber\\ & & \leq
C(D)\sum_{k=1}^{\infty} \lambda_{k}\left( (-\Delta _{D})^{\alpha
/2}(I-\Delta_{D} )^{\gamma
/2}\right)\nonumber\\
& & \hspace*{1.5cm}\times E_{\beta }\left( - \lambda_{k}\left(
(-\Delta_{D} )^{\alpha /2}(I-\Delta_{D} )^{\gamma /2}\right)t^{\beta
}\right)\psi_{k}\nonumber\\
& &\leq C(D)\sqrt{\sum_{k=1}^{\infty} [\lambda_{k}\left( (-\Delta _{D})^{\alpha
/2}(I-\Delta_{D} )^{\gamma
/2}\right)\psi_{k}]^{2}}
\nonumber\\
& &
\times \sqrt{\sum_{k=1}^{\infty}[E_{\beta }\left( - \lambda_{k}\left(
(-\Delta_{D} )^{\alpha /2}(I-\Delta_{D} )^{\gamma /2}\right)t^{\beta
}\right)]^{2}}
<\infty,\label{fsd}
\end{eqnarray}
\noindent where, as before
$\psi_{k}=\int_{D}\phi_{k}(\mathbf{y})\psi(\mathbf{y})d\mathbf{y}.$ Here,
$$\sqrt{\sum_{k=1}^{\infty} [\lambda_{k}\left( (-\Delta _{D})^{\alpha
/2}(I-\Delta_{D} )^{\gamma
/2}\right)\psi_{k}]^{2}}<\infty,$$
\noindent since
$\psi \in \overline{H}^{\alpha +\gamma }(D),$   and  $$\sqrt{\sum_{k=1}^{\infty}[E_{\beta }\left( - \lambda_{k}\left(
(-\Delta_{D} )^{\alpha /2}(I-\Delta_{D} )^{\gamma /2}\right)t^{\beta
}\right)]^{2}}<\infty $$ from  Proposition \ref{pr1}, since $\alpha
+\gamma>n.$

 Applying the regularized fractional derivative in
time (\ref{1.3}), from equation  (\ref{vsfe}), we obtain

 \begin{eqnarray}
& &\int_{D} \frac{\partial ^{\beta }}{\partial t^{\beta }}c\left(
t,\mathbf{x}\right)\psi(\mathbf{x})d\mathbf{x}=
\int_{D}\frac{1}{\Gamma (1-\beta )}\frac{d}{dt}\int_{0}^{t}(t-\tau)^{-\beta
}\int_{0}^{\tau}\int_{D}G^{D}\left(
\tau,\mathbf{x};s,\mathbf{y}\right)\nonumber\\
& & \hspace*{5cm}\times \varepsilon
(s,\mathbf{y})\psi(\mathbf{x})d\mathbf{y}dsd\tau d\mathbf{x}\nonumber\\
& & =\int_{D}\frac{1}{\Gamma (1-\beta )}\frac{d}{dt}\int_{0}^{t} u^{-\beta
}\int_{0}^{t-u}\int_{D}G^{D}\left(
t-u,\mathbf{x};s,\mathbf{y}\right)\varepsilon
(s,\mathbf{y})\psi(\mathbf{x})d\mathbf{y} dsdu d\mathbf{x}\nonumber\\
& & =\int_{D}\left(\frac{1}{\Gamma (1-\beta )}\int_{0}^{t}u^{-\beta }\left[\int_{D}G^{D}\left(
t-u,\mathbf{x};t-u,\mathbf{y}\right)\varepsilon
(t-u,\mathbf{y})d\mathbf{y}\right]du\right) \psi(\mathbf{x})d\mathbf{x}\nonumber\\
& &+
\int_{D}\left[\int_{0}^{t-u}\frac{1}{\Gamma (1-\beta )}\int_{D}\left[\frac{d}{dt}\int_{0}^{t}u^{-\beta
}G^{D}\left( t-u,\mathbf{x};s,\mathbf{y}\right)du\right]\varepsilon
(s,\mathbf{y})d\mathbf{y}ds \right]\psi(\mathbf{x})d\mathbf{x}\nonumber\\
& & =\int_{D}\left[\frac{1}{\Gamma (1-\beta )}\int_{0}^{t}u^{-\beta }\varepsilon
(t-u,\mathbf{x})du\right]\psi(\mathbf{x})d\mathbf{x}\nonumber\\
& &+\int_{D}\left[\int_{0}^{t}\int_{D}\frac{\partial ^{\beta
}}{\partial t^{\beta }}G^{D}\left(
t,\mathbf{x};s,\mathbf{y}\right)\varepsilon
(s,\mathbf{y})d\mathbf{y}ds \right]\psi(\mathbf{x})d\mathbf{x}\nonumber\\
& &=\int_{D}I_{t}^{1-\beta }\varepsilon
(t,\mathbf{x})\psi(\mathbf{x})d\mathbf{x}
-\int_{D}\left[(-\Delta_{D} )^{\alpha /2}(I-\Delta _{D})^{\gamma
/2}\int_{0}^{t}\int_{D}G^{D}\left(
t,\mathbf{x};s,\mathbf{y}\right)\right.\nonumber\\
& & \hspace*{5cm} \left.\times \varepsilon
(s,\mathbf{y})d\mathbf{y}ds\right] \psi(\mathbf{x})d\mathbf{x},
\end{eqnarray}
\noindent as we wanted to prove. Here, we have applied that
$$G^{D}\left(
u,\mathbf{x};u,\mathbf{y}\right)=\sum_{k=1}^{\infty}\phi
_{k}(\mathbf{x}) \phi_{k}(\mathbf{y})=\delta
(\mathbf{x}-\mathbf{y}),\quad \forall u>0,$$ \noindent with $\delta
(\mathbf{x}-\mathbf{y})$ denoting the Dirac Delta distribution on
$L^{2}(D)$ such that $$\int_{D}\delta
(\mathbf{x}-\mathbf{y})\varepsilon
(t,\mathbf{y})d\mathbf{y}=\varepsilon (t,\mathbf{x}),$$ \noindent in
the mean-square sense, and in the $L^{2}(D)$-weak sense.

Finally, equation (\ref{covop}) is obtained from straightforward
computation of the covariance function of $c$ in equation
(\ref{ris}), since for $n/2<\alpha +\gamma ,$ $G^{D}$ defines a
Hilbert-Schmidt operator. Consequently, its self-convolution defines
a covariance operator $\mathcal{R}$ in the trace class. Thus, its
covariance kernel $R$ is continuous, and it can be defined pointwise
from equation  (\ref{covop}).
 \hfill $\blacksquare $
\end{proof}

\section{Mean-quadratic local variation in time}
\label{Sec3} \setcounter{section}{4}

 This section provides an upper bound for the mean-quadratic local
 variation of the temporal increments of the mean-square solution
 $c$ defined  in  equation (\ref{ris}) of Proposition
\ref{pr2}. Note that although we have showed in Proposition
\ref{pr2} that $c$ satisfies, in the mean-square sense,  equation
(\ref{1.2}) over the test functions in $\overline{H}^{\alpha +\gamma
}(D),$
 for $\alpha +\gamma >n,$ as we prove, in the following result, equation (\ref{ris}) defines a H\"older continuous, in time,  spatiotemporal
 random field $c,$ under a wider range of parameter $\alpha +\gamma.$ Namely, Theorem \ref{prtqv} below  holds for $\frac{n}{2} <\alpha +\gamma ,$
 and $\beta
<1/2.$

As before,  we will consider the sequence of eigenvalues $$\lambda
_{k}\left( (-\Delta_{D} )^{\alpha /2}(I-\Delta_{D} )^{\gamma
/2}\right) ,\quad k\geq 1,$$\noindent  arranged in
increasing order of their modulus magnitude, with the associated eigenvectors $%
\phi _{k},$ $k\geq 1,$ in the same order.

\begin{theo}
\label{prtqv} Let $c$ be defined as in  (\ref{ris})--(\ref{se}) of
Proposition \ref{pr2}, under the assumption that $C(D)=\sup_{k\geq
1,\ \mathbf{x}\in D}\phi_{k}(\mathbf{x})<\infty.$ Then, for $\beta
<1/2,$ and  $\frac{n}{2} <\alpha +\gamma ,$ the following inequality
holds:
\begin{equation}
E[c(t,\mathbf{x})-c(s,\mathbf{x})]^{2}\leq [C(D)]^{2} g(t-s),
\label{eineqti3}
\end{equation}
\noindent where
\begin{equation}
g(t-s)=\mathcal{O}\left((t-s)^{\left(1-\frac{\beta n}{\alpha
+\gamma}\right)\wedge (1-\beta )}\right), \quad s\rightarrow t, \
0<s<t,\label{eqth1}
\end{equation}

\noindent with $x\wedge y$ denoting the minimum of $x$ and $y,$ for
$x, y\in \mathbb{R}.$
\end{theo}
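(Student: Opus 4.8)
The plan is to use the covariance representation (\ref{covop}) together with the eigenfunction expansion (\ref{sr}) of $G^{D}$ to write the temporal increment variance as an explicit series in the eigenvalues $\lambda_{k}:=\lambda _{k}\left( (-\Delta_{D} )^{\alpha /2}(I-\Delta_{D} )^{\gamma /2}\right)$, and then to bound each term using the Mittag-Leffler estimate of Lemma \ref{th4s} and the asymptotic order (\ref{eqfa2bbtt}) from Corollary \ref{cor1}. First I would compute
\[
E[c(t,\mathbf{x})-c(s,\mathbf{x})]^{2}
= E[c(t,\mathbf{x})]^{2}+E[c(s,\mathbf{x})]^{2}-2E[c(t,\mathbf{x})c(s,\mathbf{x})] ,
\]
and, inserting (\ref{sr}) into (\ref{covop}) and using orthonormality of $\{\phi_{k}\}$ in $L^{2}(D)$ to integrate out the space variable $\mathbf{z}$, reduce this to
\[
E[c(t,\mathbf{x})-c(s,\mathbf{x})]^{2}
=\sum_{k\geq 1}\phi_{k}^{2}(\mathbf{x})\int_{0}^{s}\Bigl[E_{\beta}(-\lambda_{k}(t-u)^{\beta})-E_{\beta}(-\lambda_{k}(s-u)^{\beta})\Bigr]^{2}du
+\sum_{k\geq 1}\phi_{k}^{2}(\mathbf{x})\int_{s}^{t}E_{\beta}^{2}(-\lambda_{k}(t-u)^{\beta})\,du .
\]
Then $\phi_{k}^{2}(\mathbf{x})\leq [C(D)]^{2}$ pulls the constant out front, leaving two scalar sums to estimate; this is where the factor $[C(D)]^{2}$ in (\ref{eineqti3}) comes from.

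Next I would handle the two pieces separately. For the second ("diagonal") sum, a change of variable $u\mapsto t-u$ gives $\int_{0}^{t-s}E_{\beta}^{2}(-\lambda_{k}v^{\beta})\,dv$; using the upper bound $E_{\beta}(-x)\leq (1+[\Gamma(1+\beta)]^{-1}x)^{-1}\leq 1$ and $E_{\beta}^{2}\le E_{\beta}$, one can bound this by $\min\{t-s,\ \text{(something)}\cdot\lambda_{k}^{-1/\beta}\}$ after a scaling $v=\lambda_{k}^{-1/\beta}w$; summing against $\lambda_{k}\sim c\,k^{(\alpha+\gamma)/n}$ and splitting the sum at $k$ of order $(t-s)^{-n/(\alpha+\gamma)}$ will produce the exponent $1-\beta n/(\alpha+\gamma)$ (the condition $\tfrac{n}{2}<\alpha+\gamma$ being exactly what makes the tail of $\sum_{k}\lambda_{k}^{-2/\beta}$-type sums — or the relevant integral — converge). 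For the first sum, the inner integrand is the squared increment of $v\mapsto E_{\beta}(-\lambda_{k}v^{\beta})$ between $v=s-u$ and $v=t-u$; I would use the elementary Hölder-type bound $|E_{\beta}(-a)-E_{\beta}(-b)|\le C|a-b|^{\beta}\wedge 1$ together with $|(t-u)^{\beta}-(s-u)^{\beta}|\le (t-s)^{\beta}$, or alternatively bound the increment by $C\lambda_{k}(t-s)$ for small argument and by the individual terms for large argument, again scaling in $\lambda_{k}$ and splitting the $k$-sum; the $(t-u)^{\beta}$-vs-$(s-u)^{\beta}$ comparison, valid since $0\le s-u\le t-u$, and the restriction $\beta<1/2$ (ensuring an integral like $\int_{0}^{\cdot}v^{-2\beta}dv$ converges) are the points to be careful about. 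Collecting the two contributions gives the stated $g(t-s)=\mathcal{O}\bigl((t-s)^{(1-\beta n/(\alpha+\gamma))\wedge(1-\beta)}\bigr)$.

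The main obstacle I expect is the careful bookkeeping in the first sum: one must control the Mittag-Leffler increment uniformly in $k$ over the full range of $\lambda_{k}(s-u)^{\beta}$ and $\lambda_{k}(t-u)^{\beta}$ (small-argument regime, where a linear-in-$\lambda_{k}$ bound is available, versus large-argument regime, where each term is individually small), choose the split point in $k$ optimally as a function of $t-s$, and verify that interchanging $\sum_{k}$ with $\int_{0}^{s}du$ is legitimate — this last point follows from Proposition \ref{pr1} and the uniform boundedness of $\phi_{k}$, guaranteeing $G^{D}$ is Hilbert--Schmidt so all the series converge absolutely. The two competing exponents $1-\beta n/(\alpha+\gamma)$ and $1-\beta$ arise respectively from the tail of the eigenvalue sum and from the "near-diagonal" behaviour $u\to t$ of the $\int_{s}^{t}$ term (where even the $k=1$ contribution already gives $\mathcal{O}((t-s)^{1-\beta})$ via $\int_{0}^{t-s}E_{\beta}^{2}(-\lambda_{1}v^{\beta})dv$ and, for $\beta<1$, a crude bound $E_{\beta}(-x)\le 1$ gives only $t-s$, but the sharper $E_{\beta}(-x)\le C/(1+x)$ combined with summation over $k$ yields the $1-\beta n/(\alpha+\gamma)$ part, while for a single low mode one extracts $1-\beta$); taking the minimum is therefore forced.
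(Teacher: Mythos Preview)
Your decomposition into $\int_{0}^{s}$ and $\int_{s}^{t}$ pieces, and your treatment of the diagonal piece $\int_{s}^{t}$ via scaling and the integral test against $\lambda_{k}\sim c\,k^{(\alpha+\gamma)/n}$, coincide with the paper's argument. The paper obtains from that piece $K_{3}(t-s)^{1-\beta}+K_{4}(t-s)^{1-\beta n/(\alpha+\gamma)}$; the exponent $1-\beta$ arises there from the finite head $k\leq k_{0}$ via the crude bound $E_{\beta}(-x)\leq \Gamma(1+\beta)/x$, which gives $\int_{s}^{t}(t-u)^{-\beta}\,du$, rather than from any single-mode refinement as you speculate.

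The genuine divergence is in the cross term $\int_{0}^{s}$. The paper does \emph{not} estimate the Mittag--Leffler increment at all: it simply uses the monotone decrease of $E_{\beta}(-\cdot)$ to bound
\[
\bigl[E_{\beta}(-\lambda_{k}(t-u)^{\beta})-E_{\beta}(-\lambda_{k}(s-u)^{\beta})\bigr]^{2}
\;\leq\;\bigl[E_{\beta}(-\lambda_{k}(s-u)^{\beta})\bigr]^{2},
\]
and then treats $\int_{0}^{s}\sum_{k}[E_{\beta}(-\lambda_{k}(s-u)^{\beta})]^{2}\,du$ by the integral-test method of Proposition~\ref{pr1}, arriving at $K_{1}s^{1-2\beta}+K_{2}s^{1-\beta n/(\alpha+\gamma)}$ (the restriction $\beta<1/2$ enters precisely here, for the integrability of $(s-u)^{-2\beta}$). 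Note that this contribution depends only on $s$, not on $t-s$, and so does not vanish as $s\to t$; the paper's final $\mathcal{O}$ assertion is then simply stated. Your plan of retaining the increment structure and splitting the $k$-sum according to the size of $\lambda_{k}(t-s)^{\beta}$ is therefore a genuinely different and more careful route, and is what one actually needs to produce a bound tending to zero with $t-s$. One caution: the H\"older bound $|E_{\beta}(-a)-E_{\beta}(-b)|\leq C|a-b|^{\beta}$ you suggest is too blunt to sum in $k$ (it yields a divergent $\sum_{k}\lambda_{k}^{2\beta}$); a workable version exploits the complete monotonicity of $t\mapsto E_{\beta}(-\lambda t^{\beta})$, whose convexity gives $E_{\beta}(-\lambda_{k}v^{\beta})-E_{\beta}(-\lambda_{k}(v+h)^{\beta})\leq 1-E_{\beta}(-\lambda_{k}h^{\beta})$ uniformly in $v$, after which your splitting argument goes through cleanly.
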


\begin{proof}
Since $E_{\beta }(-x)$ is a monotone decreasing functions with
values in the interval $[0,1],$ for $x\in \mathbb{R}_{+},$  for
$0<s<t,$ we obtain

\begin{eqnarray}
&
&E[c(t,\mathbf{x})-c(s,\mathbf{x})]^{2}=\int_{0}^{s}\sum_{k=1}^{\infty
} \phi_{k}^{2}(\mathbf{x})\left[E_{\beta
}(-\lambda_{k}\left((-\Delta_{D} )^{\alpha /2}(I-\Delta_{D}
)^{\gamma /2}\right)(t-u)^{\beta })\right. \nonumber\\ &
&\hspace*{1.5cm} \left. -E_{\beta
}\left(-\lambda_{k}\left((-\Delta_{D} )^{\alpha /2}(I-\Delta_{D}
)^{\gamma
/2}\right)(s-u)^{\beta }\right)\right]^{2}du\nonumber\\
& &+\int_{s}^{t}\sum_{k=1}^{\infty }\phi_{k}^{2}(\mathbf{x})E_{\beta
}\left(-2\lambda_{k}\left((-\Delta _{D})^{\alpha /2}(I-\Delta_{D}
)^{\gamma /2}\right)(t-u)^{\beta }\right)du \nonumber\\ &
&\hspace*{1.5cm}\leq [C(D)]^{2}\left[\int_{0}^{s}\sum_{k=1}^{\infty
}\left[E_{\beta }\left(-\lambda_{k}\left((-\Delta_{D} )^{\alpha
/2}(I-\Delta_{D} )^{\gamma /2}\right)(t-u)^{\beta
}\right)\right.\right.
\nonumber\\
& & \left.\left.-E_{\beta }\left((-\lambda_{k}\left((-\Delta_{D}
)^{\alpha /2}(I-\Delta_{D} )^{\gamma /2}\right)(s-u)^{\beta
}\right)\right]^{2}du\right.\nonumber\\ &
&\hspace*{1.5cm}\left.+\int_{s}^{t}\sum_{k=1}^{\infty
}\left[E_{\beta }\left(-\lambda_{k}\left((-\Delta_{D} )^{\alpha
/2}(I-\Delta_{D} )^{\gamma
/2}\right)(t-u)^{\beta }\right)\right]^{2}du\right]\nonumber
\end{eqnarray}
\begin{eqnarray}
& & = [C(D)]^{2}\left[\int_{0}^{s}\sum_{k=1}^{\infty }\left[E_{\beta
}\left(-\lambda_{k}\left((-\Delta_{D} )^{\alpha
/2}(I-\Delta_{D})^{\gamma /2}\right)(t-u)^{\beta
}\right)\right]^{2}\right.\nonumber\\
& &\hspace*{1.5cm}\left.+\left[E_{\beta
}\left((-\lambda_{k}\left((-\Delta_{D} )^{\alpha /2}(I-\Delta_{D}
)^{\gamma /2}\right)(s-u)^{\beta }\right)\right]^{2}\right.\nonumber\\
& &\left. -2E_{\beta }\left(-\lambda_{k}\left((-\Delta_{D} )^{\alpha
/2}(I-\Delta_{D} )^{\gamma /2}\right)(t-u)^{\beta }\right)\right.
\nonumber\\
& &\hspace*{1.5cm} \left.\times E_{\beta
}\left((-\lambda_{k}\left((-\Delta_{D} )^{\alpha
/2}(I-\Delta_{D} )^{\gamma /2}\right)(s-u)^{\beta }\right)du\right.\nonumber\\
& &\left.+\int_{s}^{t}\sum_{k=1}^{\infty }E_{\beta
}\left(-2\lambda_{k}\left((-\Delta_{D} )^{\alpha /2}(I-\Delta_{D}
)^{\gamma /2}\right)(t-u)^{\beta }\right)du\right]\nonumber\\ & &
\leq [C(D)]^{2}\left[\int_{0}^{s}\sum_{k=1}^{\infty }\left[E_{\beta
}\left(-\lambda_{k}\left((-\Delta_{D} )^{\alpha /2}(I-\Delta_{D}
)^{\gamma /2}\right)(t-u)^{\beta }\right)\right]^{2}
\right.\nonumber\\& & \left. +\left[E_{\beta
}\left((-\lambda_{k}\left((-\Delta_{D} )^{\alpha /2}(I-\Delta_{D}
)^{\gamma /2}\right)(s-u)^{\beta }\right)\right]^{2}\right.
\nonumber\\
& & \left.-2\left[E_{\beta }\left(-\lambda_{k}\left((-\Delta_{D}
)^{\alpha /2}(I-\Delta_{D} )^{\gamma /2}\right)(t-u)^{\beta
}\right)\right]^{2}du\right.\nonumber\end{eqnarray}
\begin{eqnarray}
& & \left.+\int_{s}^{t}\sum_{k=1}^{\infty }E_{\beta
}\left(-2\lambda_{k}\left((-\Delta _{D})^{\alpha /2}(I-\Delta_{D}
)^{\gamma /2}\right)(t-u)^{\beta }\right)du\right]\nonumber\\ &
&\leq [C(D)]^{2}\left[\int_{0}^{s}\sum_{k=1}^{\infty }\left[E_{\beta
}\left(-\lambda_{k}\left((-\Delta_{D} )^{\alpha /2}(I-\Delta_{D}
)^{\gamma /2}\right)(s-u)^{\beta
}\right)\right]^{2}du\right.\nonumber \\&
&\left.+\int_{s}^{t}\sum_{k=1}^{\infty }E_{\beta
}\left(-2\lambda_{k}\left((-\Delta_{D} )^{\alpha /2}(I-\Delta_{D}
)^{\gamma /2}\right)(t-u)^{\beta }\right)du\right]. \label{eq11ttt}
\end{eqnarray}

In a similar way to equation (\ref{pfpjj}), from equation
(\ref{eq11ttt}), we obtain
\begin{eqnarray}
& &E[c(t,\mathbf{x})-c(s,\mathbf{x})]^{2}\leq [C(D)]^{2}
\int_{0}^{s}\left[ \widetilde{M}(\beta,\alpha,\gamma,n, (s-u)^{\beta
}) \right.\nonumber\\ & &\left.\hspace*{2cm}+\frac{(s-u)^{-\beta
n/(\alpha +\gamma)}}{(\alpha +\gamma)/n}
\int_{0}^{\infty}\left[E_{\beta
}\left(x\right)\right]^{2}x^{\frac{n}{\alpha +\gamma }-1}dx\right]du \nonumber\\
& & + [C(D)]^{2}\int_{s}^{t}\left[M(\beta,\alpha,\gamma,n,
(t-u)^{\beta})\right. \nonumber\\
& & \hspace*{2cm}\left.+\frac{(t-u)^{-\beta n/(\alpha
+\gamma)}}{(\alpha +\gamma)/n} \int_{0}^{\infty}E_{\beta
}\left(2x\right)x^{\frac{n}{\alpha +\gamma }-1}dx\right]du \nonumber
\end{eqnarray}
\begin{eqnarray}
& & \leq [C(D)]^{2}\int_{0}^{s}\left[
\widetilde{M}(\beta,\alpha,\gamma,n,(s-u)^{\beta })
\right.\nonumber\\& & \hspace*{2cm}\left.+\frac{(s-u)^{-\beta
n/(\alpha +\gamma)}}{(\alpha
+\gamma)/n}\int_{0}^{\infty}\frac{x^{\frac{n}{\alpha +\gamma
}-1}}{(1+[\Gamma (1+\beta )]^{-1}x)^{2}}dx\right]du\nonumber\\
& & +[C(D)]^{2}\int_{s}^{t}\left[M(\beta,\alpha,\gamma,n,
(t-u)^{\beta})\right.\nonumber\\& &
\hspace*{2cm}\left.+\frac{(t-u)^{-\beta n/(\alpha +\gamma)}}{(\alpha
+\gamma)/n} \int_{0}^{\infty}\frac{x^{\frac{n}{\alpha +\gamma
}-1}}{1+[\Gamma (1+\beta )]^{-1}2x}dx\right]du,\nonumber\\
\label{ineqfund}
\end{eqnarray}

\noindent where
\begin{eqnarray}
& &M(\beta,\alpha,\gamma,n,
(s-u)^{\beta})=\sum_{k=1}^{k_{0}}E_{\beta }\left(-2(s-u)^{\beta
}\lambda_{k}\left((-\Delta_{D} )^{\alpha /2}(I-\Delta_{D} )^{\gamma
/2}\right)\right)
\nonumber\\
& & \widetilde{M}(\beta,\alpha,\gamma,n, (s-u)^{\beta
})=\sum_{k=1}^{k_{0}}\left[E_{\beta }\left(-(s-u)^{\beta
}\lambda_{k}\left((-\Delta_{D} )^{\alpha /2}(I-\Delta_{D} )^{\gamma
/2}\right)\right)\right]^{2}.\nonumber\\\label{eqp1bbcc}\end{eqnarray}

Hence,  \begin{eqnarray}&
&E[c(t,\mathbf{x})-c(s,\mathbf{x})]^{2}\leq [C(D)]^{2}\left[
K_{1}(\beta,\alpha,\gamma,n)s^{1-2\beta}+K_{2}(\beta,\alpha,\gamma,n)s^{1-\beta
n/(\alpha +\gamma )}\right. \nonumber\\
& &
\left.+K_{3}(\beta,\alpha,\gamma,n)(t-s)^{1-\beta}+K_{4}(\beta,\alpha,\gamma,n)(t-s)^{1-\beta
n/(\alpha +\gamma )}\right]. \label{ineqfundb}
\end{eqnarray}

Thus, when $s\rightarrow t,$ $s<t,$ we have
$$E[c(t,\mathbf{x})-c(s,\mathbf{x})]^{2}\leq [C(D)]^{2}g(t-s),$$

\noindent with \begin{eqnarray} g(t-s)&=& \left[
K_{1}(\beta,\alpha,\gamma,n)s^{1-2\beta}+K_{2}(\beta,\alpha,\gamma,n)s^{1-\beta
n/(\alpha +\gamma )}\right. \nonumber\\
& &
\left.+K_{3}(\beta,\alpha,\gamma,n)(t-s)^{1-\beta}+K_{4}(\beta,\alpha,\gamma,n)(t-s)^{1-\beta
n/(\alpha +\gamma )}\right]\nonumber\\
& &= \mathcal{O}\left((t-s)^{1-\left(\beta n/(\alpha +\gamma
)\right)\wedge (1-\beta )}\right).\nonumber\end{eqnarray}

\hfill $\blacksquare $
\end{proof}
\section{Mean-quadratic local variation in space}
\label{Sec4}

\setcounter{section}{5}

 The fractional local asymptotic exponent, in the mean-square sense,  of the spatial increments of the solution $c,$ derived in Proposition  \ref{pr2}, is obtained
in the following result.
\begin{theo}
\label{th2} Let $c$ be as given in equations
(\ref{ris})--(\ref{se}), for  $n<\alpha +\gamma.$ Assume that for
every $k\geq 1,$
$$|\phi_{k}(\mathbf{x}+\mathbf{h})-\phi_{k}(\mathbf{x})|=\mathcal{O}(\|\mathbf{h}\|^{\Upsilon
}),\quad \|\mathbf{h}\|\rightarrow 0,\quad \Upsilon>0.$$ In
particular, for each $k\geq 1,$ and for $\|\mathbf{h}\|$  small,
$$|\phi_{k}(\mathbf{x}+\mathbf{h})-\phi_{k}(\mathbf{x})|\leq
C_{k}\|\mathbf{h}\|^{\Upsilon},$$ \noindent for certain positive
constant $C_{k}.$ If $\sup_{k}C_{k}=C<\infty,$ then, for each $t>0,$
\begin{equation}
E[c(t,\mathbf{x})-c(t,\mathbf{y})]^{2}=\mathcal{O}(\|\mathbf{x}-\mathbf{y}\|^{2\Upsilon
}),\quad \|\mathbf{x}-\mathbf{y}\|\rightarrow 0,\quad \Upsilon>0.
 \label{sicc}
\end{equation}

\noindent Thus, for  $\Vert \mathbf{x}-\mathbf{y}\Vert $
sufficiently small,
\begin{equation}
E[c(t,\mathbf{x})-c(t,\mathbf{y})]^{2}\leq
Cg(t)\|\mathbf{x}-\mathbf{y}\|^{2\Upsilon }, \label{sicc2}
\end{equation}%
\noindent where
\begin{eqnarray}
g(t)&=&t^{1-\beta}\sum_{k=1}^{\infty }\frac{\Gamma (1+\beta
)}{\lambda_{k}\left((-\Delta )^{\alpha /2}_{D}(I-\Delta )^{\gamma
/2}_{D}\right)} ,\quad t>0.\nonumber
\end{eqnarray}

\end{theo}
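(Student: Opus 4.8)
The plan is to use the spectral (eigenfunction) representation of the covariance kernel given in Proposition \ref{pr2}, expand the spatial increment $c(t,\mathbf{x})-c(t,\mathbf{y})$ in this basis, and estimate the resulting series term by term using the hypothesized H\"older continuity of each eigenvector together with the uniform constant $C=\sup_{k}C_{k}$. Concretely, from equation (\ref{covop}) with $s=t$ and $\mathbf{y}$ replaced appropriately, one obtains
\begin{equation*}
E[c(t,\mathbf{x})-c(t,\mathbf{y})]^{2}=\int_{0}^{t}\sum_{k\geq 1}\bigl[\phi_{k}(\mathbf{x})-\phi_{k}(\mathbf{y})\bigr]^{2}\Bigl[E_{\beta}\bigl(-\lambda_{k}(t-u)^{\beta}\bigr)\Bigr]^{2}\,du,
\end{equation*}
where I abbreviate $\lambda_{k}=\lambda_{k}\bigl((-\Delta_{D})^{\alpha/2}(I-\Delta_{D})^{\gamma/2}\bigr)$. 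First I would bound $\bigl[\phi_{k}(\mathbf{x})-\phi_{k}(\mathbf{y})\bigr]^{2}\leq C^{2}\|\mathbf{x}-\mathbf{y}\|^{2\Upsilon}$ uniformly in $k$ for $\|\mathbf{x}-\mathbf{y}\|$ small, pull this factor out of the sum, and be left with controlling $\int_{0}^{t}\sum_{k\geq 1}\bigl[E_{\beta}(-\lambda_{k}(t-u)^{\beta})\bigr]^{2}\,du$.

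Next I would estimate that remaining double sum/integral. Using the upper bound from Lemma \ref{th4s}, $E_{\beta}(-x)\leq \tfrac{1}{1+[\Gamma(1+\beta)]^{-1}x}\leq \Gamma(1+\beta)/x$ for $x>0$, so $\bigl[E_{\beta}(-\lambda_{k}(t-u)^{\beta})\bigr]^{2}\leq \Gamma(1+\beta)\cdot\dfrac{E_{\beta}(-\lambda_{k}(t-u)^{\beta})}{\lambda_{k}(t-u)^{\beta}}$; more simply, bounding one factor by $\Gamma(1+\beta)/(\lambda_{k}(t-u)^{\beta})$ and the other by $1$ gives $\bigl[E_{\beta}(-\lambda_{k}(t-u)^{\beta})\bigr]^{2}\leq \Gamma(1+\beta)/(\lambda_{k}(t-u)^{\beta})$. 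Integrating in $u$ over $(0,t)$ yields $\int_{0}^{t}(t-u)^{-\beta}\,du=t^{1-\beta}/(1-\beta)$, and since $\alpha+\gamma>n$ Corollary \ref{cor1} (via the asymptotics $\lambda_{k}\sim c\,k^{(\alpha+\gamma)/n}$) guarantees $\sum_{k\geq 1}\lambda_{k}^{-1}<\infty$. Collecting the constants gives the bound $E[c(t,\mathbf{x})-c(t,\mathbf{y})]^{2}\leq C\,g(t)\,\|\mathbf{x}-\mathbf{y}\|^{2\Upsilon}$ with
$g(t)=t^{1-\beta}\sum_{k\geq 1}\dfrac{\Gamma(1+\beta)}{\lambda_{k}}$ (absorbing the harmless $1/(1-\beta)$ into $C$, or adjusting $g$ accordingly), which is exactly (\ref{sicc2}); then (\ref{sicc}) follows since $g(t)<\infty$ for each fixed $t$.

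The main obstacle is twofold. First, one must be careful that the interchange of summation and integration (and the term-by-term bounding of the increment) is legitimate; this is justified because all terms are nonnegative and the resulting majorant series converges, so Tonelli's theorem applies. Second, and more delicately, the hypothesis $\sup_{k}C_{k}=C<\infty$ is a genuine uniformity assumption on the moduli of continuity of the eigenfunctions — the pointwise estimate $|\phi_{k}(\mathbf{x}+\mathbf{h})-\phi_{k}(\mathbf{x})|=\mathcal{O}(\|\mathbf{h}\|^{\Upsilon})$ for each $k$ is not enough by itself, and one must invoke this hypothesis precisely at the step where the increment factor is pulled out of the sum uniformly in $k$. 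The summability $\sum_{k}\lambda_{k}^{-1}<\infty$ under $\alpha+\gamma>n$ is the other place the parameter restriction enters, and it comes directly from the eigenvalue asymptotics established in Corollary \ref{cor1}; no further work is needed there.
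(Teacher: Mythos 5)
Your proposal is correct and follows essentially the same route as the paper: the spectral representation of the increment, the uniform H\"older bound $[\phi_{k}(\mathbf{x})-\phi_{k}(\mathbf{y})]^{2}\leq C\|\mathbf{x}-\mathbf{y}\|^{2\Upsilon}$ pulled out of the sum, the bound $[E_{\beta}(-\lambda_{k}\nu^{\beta})]^{2}\leq \Gamma(1+\beta)/(\lambda_{k}\nu^{\beta})$ from Lemma \ref{th4s}, integration in time, and summability of $\sum_{k}\lambda_{k}^{-1}$ for $\alpha+\gamma>n$ via Corollary \ref{cor1}. Your remark about the factor $1/(1-\beta)$ from $\int_{0}^{t}\nu^{-\beta}d\nu$ (which the paper silently omits and you absorb into the constant) and the Tonelli justification are harmless refinements of the same argument.
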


\begin{proof}

  Applying   H\"older continuity of the eigenvectors,
 from Lemma
\ref{th4s},  we have, for every $t>0,$
\begin{eqnarray}
& &E[c(t,\mathbf{x})-c(t,\mathbf{y})]^{2}
\nonumber\\
& & =\int_{0}^{t}\sum_{k=1}^{\infty } \left[E_{\beta
}(-\lambda_{k}\left((-\Delta )^{\alpha /2}_{D}(I-\Delta )^{\gamma
/2}_{D}\right)(t-u)^{\beta
})\phi_{k}(\mathbf{x})\right.\nonumber\\
& & \left. -E_{\beta }(-\lambda_{k}\left((-\Delta )^{\alpha
/2}_{D}(I-\Delta )^{\gamma /2}_{D}\right)(t-u)^{\beta
})\phi_{k}(\mathbf{y})\right]^{2}du\nonumber\\
& &=\int_{0}^{t}\sum_{k=1}^{\infty }
[\phi_{k}(\mathbf{x})-\phi_{k}(\mathbf{y})]^{2}\left[E_{\beta
}(-\lambda_{k}\left((-\Delta )^{\alpha /2}_{D}(I-\Delta )^{\gamma
/2}_{D}\right)(t-u)^{\beta })\right]^{2}du\nonumber\\
& & \leq C\|\mathbf{x}-\mathbf{y}\|^{2\Upsilon
}\int_{0}^{t}\sum_{k=1}^{\infty }\left[E_{\beta
}(-\lambda_{k}\left((-\Delta )^{\alpha /2}_{D}(I-\Delta )^{\gamma
/2}_{D}\right)(t-u)^{\beta })\right]^{2}du\nonumber\\
& & \leq C\|\mathbf{x}-\mathbf{y}\|^{2\Upsilon
}\int_{0}^{t}\sum_{k=1}^{\infty }\frac{\Gamma (1+\beta
)}{\lambda_{k}\left((-\Delta )^{\alpha /2}_{D}(I-\Delta )^{\gamma
/2}_{D}\right)\nu^{\beta }}d\nu \nonumber\\
& & =C\|\mathbf{x}-\mathbf{y}\|^{2\Upsilon
}t^{1-\beta}\sum_{k=1}^{\infty }\frac{\Gamma (1+\beta
)}{\lambda_{k}\left((-\Delta )^{\alpha /2}_{D}(I-\Delta )^{\gamma
/2}_{D}\right)}
=Cg(t)\|\mathbf{x}-\mathbf{y}\|^{2\Upsilon},\nonumber\\
\end{eqnarray}
\noindent \noindent as we wanted to prove.  Note that,  from
Corollary \ref{cor1}, for each  fixed $t>0,$
$$g(t)=t^{1-\beta}\sum_{k=1}^{\infty }\frac{\Gamma (1+\beta
)}{\lambda_{k}\left((-\Delta )^{\alpha /2}_{D}(I-\Delta )^{\gamma
/2}_{D}\right)} <\infty ,$$ \noindent for $\alpha +\gamma >n.$
\end{proof}

\section{Mean quadratic local variation in time and space}
\label{Sec5}

\setcounter{section}{6}

In this section we apply the results derived in Theorems \ref{prtqv}
and \ref{th2} to obtain the mean-quadratic local variation
properties of the spatiotemporal increments of the weak-sense
solution $c$ to equations (\ref{1.2})--(\ref{1.2b}).

\begin{theo}
\label{stinc} Under conditions of Theorems \ref{prtqv} and
\ref{th2}, let $c$ be  defined in equations (\ref{ris})--(\ref{se}).
Then,  as $s\rightarrow t,$ $s,t\in (0,T],$ and $\Vert
\mathbf{x}-\mathbf{y}\Vert \rightarrow 0,$
\begin{eqnarray*}
&&E[c(t,\mathbf{x})-c(s,\mathbf{y})]^{2} \\
&\leq &\widetilde{C}(D,T,\beta ,\alpha,\gamma,\Upsilon
,n)\|(t,\mathbf{x})-(s,\mathbf{y})\|^{\left(1-\frac{\beta n}{\alpha
+\gamma}\right)\wedge (1-\beta )\wedge 2\Upsilon} ,
\end{eqnarray*}
\noindent where $$\widetilde{C}(D,T,\beta ,\alpha,\gamma,\Upsilon,
n)=8([C(D)]^{2}\vee
Cg(T))\left(\frac{1}{2}\right)^{1/2\left[\left(1-\frac{\beta
n}{\alpha +\gamma}\right)\wedge (1-\beta )\wedge 2\Upsilon\right].
},$$ \noindent with  $x\vee y$ representing the maximum of $x$ and
$y,$ and, as before,  $x\wedge y$ representing the minimum. Here,
$C(D)$ is introduced in Theorem \ref{prtqv}, and $C$ and $g(T)$ are
given in Theorem \ref{th2}.
\end{theo}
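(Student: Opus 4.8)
The plan is to reduce the spatiotemporal increment to the two increments already estimated in Theorems~\ref{prtqv} and~\ref{th2}, by inserting the intermediate value $c(s,\mathbf{x})$ and applying the triangle inequality in $L^{2}(\Omega)$. Since, by Proposition~\ref{pr2}, $c$ is a zero-mean Gaussian random field with finite second moments, Minkowski's inequality gives
\[
\left(E[c(t,\mathbf{x})-c(s,\mathbf{y})]^{2}\right)^{1/2}\le\left(E[c(t,\mathbf{x})-c(s,\mathbf{x})]^{2}\right)^{1/2}+\left(E[c(s,\mathbf{x})-c(s,\mathbf{y})]^{2}\right)^{1/2},
\]
and squaring with $(a+b)^{2}\le 2a^{2}+2b^{2}$ reduces the problem to estimating $E[c(t,\mathbf{x})-c(s,\mathbf{x})]^{2}$, a pure temporal increment at the fixed point $\mathbf{x}$, and $E[c(s,\mathbf{x})-c(s,\mathbf{y})]^{2}$, a pure spatial increment at the fixed time $s$.

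For the temporal term I would quote Theorem~\ref{prtqv}, whose hypotheses ($\beta<1/2$, $\tfrac{n}{2}<\alpha+\gamma$) are among the assumptions here: $E[c(t,\mathbf{x})-c(s,\mathbf{x})]^{2}\le[C(D)]^{2}g(t-s)$ with $g(t-s)\le \mathrm{const}\cdot(t-s)^{(1-\beta n/(\alpha+\gamma))\wedge(1-\beta)}$ as $s\to t$; here one uses that the $s$-dependent terms in~(\ref{ineqfundb}) stay bounded over $s\in(0,T]$ by their values at $T$, because $1-2\beta>0$ and $1-\beta n/(\alpha+\gamma)>0$ under the standing restrictions. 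For the spatial term I would quote Theorem~\ref{th2} with $t$ replaced by $s$: $E[c(s,\mathbf{x})-c(s,\mathbf{y})]^{2}\le Cg(s)\|\mathbf{x}-\mathbf{y}\|^{2\Upsilon}$, where $g(s)=s^{1-\beta}\sum_{k\ge1}\Gamma(1+\beta)/\lambda_{k}\big((-\Delta_{D})^{\alpha/2}(I-\Delta_{D})^{\gamma/2}\big)$ is finite (as $\alpha+\gamma>n$) and nondecreasing in its argument (as $1-\beta>0$), so that $g(s)\le g(T)$ for $s\in(0,T]$.

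It then remains to merge the two estimates into a single power of $\rho:=\|(t,\mathbf{x})-(s,\mathbf{y})\|$. Put $\theta=(1-\beta n/(\alpha+\gamma))\wedge(1-\beta)\wedge 2\Upsilon$, and note that for $s\to t$ and $\mathbf{x}\to\mathbf{y}$ we may take $|t-s|\le1$ and $\|\mathbf{x}-\mathbf{y}\|\le1$. Since each of the exponents $(1-\beta n/(\alpha+\gamma))\wedge(1-\beta)$ and $2\Upsilon$ is at least $\theta$, we have $(t-s)^{(1-\beta n/(\alpha+\gamma))\wedge(1-\beta)}\le(t-s)^{\theta}$ and $\|\mathbf{x}-\mathbf{y}\|^{2\Upsilon}\le\|\mathbf{x}-\mathbf{y}\|^{\theta}$; combining the two bounds, factoring out $[C(D)]^{2}\vee Cg(T)$, and then using the elementary concavity inequality $u^{\theta}+v^{\theta}\le 2^{1-\theta/2}(u^{2}+v^{2})^{\theta/2}$ with $u=t-s$, $v=\|\mathbf{x}-\mathbf{y}\|$, one collects all the numerical factors into $8([C(D)]^{2}\vee Cg(T))(1/2)^{\theta/2}$ and obtains the stated inequality $E[c(t,\mathbf{x})-c(s,\mathbf{y})]^{2}\le\widetilde{C}(D,T,\beta,\alpha,\gamma,\Upsilon,n)\,\rho^{\theta}$.

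The argument uses no analytic input beyond Theorems~\ref{prtqv} and~\ref{th2}; the only point requiring genuine care is the uniform-in-$s$ control of the constants inherited from Theorem~\ref{prtqv} and the monotonicity of $g$ used in Theorem~\ref{th2}, both of which rest precisely on the standing restrictions $\beta<1/2$ and $\alpha+\gamma>n$ (hence $>n/2$). Everything else is bookkeeping of exponents through the minimum and of constants through the concavity step.
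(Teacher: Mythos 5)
Your proposal is correct and follows essentially the same route as the paper's proof: insert the intermediate value $c(s,\mathbf{x})$, bound the temporal and spatial increments by Theorems \ref{prtqv} and \ref{th2} with $g(s)\leq g(T)$, and merge the two powers via a Jensen/concavity step into a single power of $\|(t,\mathbf{x})-(s,\mathbf{y})\|$. The only difference is bookkeeping — you use Minkowski plus $(a+b)^{2}\leq 2(a^{2}+b^{2})$ where the paper expands the square and applies the Cauchy--Schwarz inequality to the cross term — which in fact yields a slightly smaller constant than the stated $\widetilde{C}$, so the claimed bound follows a fortiori.
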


\begin{proof}
The proof follows from Theorems \ref{prtqv} and \ref{th2}.
Specifically, from equations (\ref{eineqti3})--(\ref{eqth1}), and
(\ref{sicc})--(\ref{sicc2}), as $s\rightarrow t,$ and
$\|\mathbf{x}-\mathbf{y}\|\rightarrow 0,$
\begin{eqnarray}
& &E[c(t,\mathbf{x})-c(s,\mathbf{y})]^{2}
=E[c(t,\mathbf{x})-c(s,\mathbf{x})+c(s,\mathbf{x})-c(s,\mathbf{y})]^{2}\nonumber\\
&
&=E[c(t,\mathbf{x})-c(s,\mathbf{x})]^{2}+E[c(s,\mathbf{x})-c(s,\mathbf{y})]^{2}\nonumber\\
& & \hspace*{2cm}+
2E\left[(c(t,\mathbf{x})-c(s,\mathbf{x}))(c(s,\mathbf{x})-c(s,\mathbf{y}))\right]\nonumber\\
& & \leq
E[c(t,\mathbf{x})-c(s,\mathbf{x})]^{2}+E[c(s,\mathbf{x})-c(s,\mathbf{y})]^{2}\nonumber\\
& & \hspace*{2cm}+
2\left|E\left[(c(t,\mathbf{x})-c(s,\mathbf{x}))(c(s,\mathbf{x})-c(s,\mathbf{y}))\right]\right|\nonumber
\end{eqnarray}
\begin{eqnarray}
& &\leq
E[c(t,\mathbf{x})-c(s,\mathbf{x})]^{2}+E[c(s,\mathbf{x})-c(s,\mathbf{y})]^{2}\nonumber\\
& &
\hspace*{2cm}+2\sqrt{E\left[(c(t,\mathbf{x})-c(s,\mathbf{x}))^{2}\right]}
\sqrt{E\left[(c(s,\mathbf{x})-c(s,\mathbf{y}))^{2}\right]}\nonumber\\
& & \leq [C(D)]^{2}|t-s|^{\left(1-\frac{\beta n}{\alpha
+\gamma}\right)\wedge (1-\beta
)}+Cg(s)\|\mathbf{x}-\mathbf{y}\|^{2\Upsilon
}\nonumber\\
& & \hspace*{2cm}+2\sqrt{[C(D)]^{2}|t-s|^{\left(1-\frac{\beta
n}{\alpha +\gamma}\right)\wedge
(1-\beta )}Cg(s)\|\mathbf{x}-\mathbf{y}\|^{2\Upsilon }}\nonumber\\
& & \leq [C(D)]^{2}|t-s|^{\left(1-\frac{\beta n}{\alpha
+\gamma}\right)\wedge (1-\beta
)}+Cg(s)\|\mathbf{x}-\mathbf{y}\|^{2\Upsilon
}\nonumber\\
& & \hspace*{2cm}+2\sqrt{\left([C(D)]^{2}|t-s|^{\left(1-\frac{\beta
n}{\alpha +\gamma}\right)\wedge (1-\beta
)}\right)^{2}+\left(Cg(s)\|\mathbf{x}-\mathbf{y}\|^{2\Upsilon
}\right)^{2}} \nonumber \\ & & \leq 2([C(D)]^{2}\vee
Cg(T))\left[|t-s|^{\left(1-\frac{\beta n}{\alpha
+\gamma}\right)\wedge (1-\beta )}\right.
\nonumber\\
& & \hspace*{2cm} \left.+\|\mathbf{x}-\mathbf{y}\|^{2\Upsilon
}+\sqrt{\left(|t-s|^{\left(1-\frac{\beta n}{\alpha
+\gamma}\right)\wedge (1-\beta
)}\right)^{2}+\left(\|\mathbf{x}-\mathbf{y}\|^{2\Upsilon
}\right)^{2}}\right]\nonumber\\ & & \leq 4([C(D)]^{2}\vee
Cg(T))\left[\frac{1}{2}\left(|t-s|^{\left(1-\frac{\beta n}{\alpha
+\gamma}\right)\wedge (1-\beta )\wedge 2\Upsilon }\right.\right.
\nonumber\\ & & \hspace*{2cm} \left.\left.
+\|\mathbf{x}-\mathbf{y}\|^{\left(1-\frac{\beta n}{\alpha
+\gamma}\right)\wedge (1-\beta )\wedge 2\Upsilon
}\right)\right.\nonumber\end{eqnarray}
\begin{eqnarray}
& & \left.+\sqrt{\frac{1}{4}\left(|t-s|^{\left(1-\frac{\beta
n}{\alpha +\gamma}\right)\wedge (1-\beta )\wedge 2\Upsilon
}\right)^{2} +\left(\|\mathbf{x}-\mathbf{y}\|^{\left(1-\frac{\beta
n}{\alpha +\gamma}\right)\wedge (1-\beta )\wedge 2\Upsilon
}\right)^{2}}\right].\nonumber\\\label{jensenineq}
\end{eqnarray}

Under the conditions assumed, $0<\left(1-\frac{\beta n}{\alpha
+\gamma}\right)\wedge (1-\beta )\wedge 2\Upsilon <1,$ hence, we can
apply Jensen's inequality for concave function $x^{\xi},$ $0<\xi<1,$
obtaining from the last inequality in (\ref{jensenineq}),
\begin{eqnarray}
& &E[c(t,\mathbf{x})-c(s,\mathbf{y})]^{2} \leq 4([C(D)]^{2}\vee
Cg(T)) \nonumber\\& &
\times\left[\left(\frac{1}{2}\right)^{1/2\left[\left(1-\frac{\beta
n}{\alpha +\gamma}\right)\wedge (1-\beta )\wedge 2\Upsilon\right]
}\left(|t-s|^{2}\right. \right.\nonumber\\& &
\left.\left.\hspace*{4cm}+\|\mathbf{x}-\mathbf{y}\|^{2}\right)^{1/2\left[\left(1-\frac{\beta
n}{\alpha +\gamma}\right)\wedge
(1-\beta )\wedge 2\Upsilon \right]}\right.\nonumber\\
& & \left.+\sqrt{\left(\frac{1}{4}\right)^{\left(1-\frac{\beta
n}{\alpha +\gamma}\right)\wedge (1-\beta )\wedge
2\Upsilon}\left(|t-s|^{2}
+\|\mathbf{x}-\mathbf{y}\|^{2}\right)^{\left(1-\frac{\beta n}{\alpha
+\gamma}\right)\wedge (1-\beta )\wedge 2\Upsilon}}\right]\nonumber
\end{eqnarray}
\begin{eqnarray}
& &\leq 4([C(D)]^{2}\vee
Cg(T))\left(\frac{1}{2}\right)^{1/2\left[\left(1-\frac{\beta
n}{\alpha +\gamma}\right)\wedge (1-\beta )\wedge 2\Upsilon\right]
}\nonumber\\& & \hspace*{4cm}\times
\left[\left(|t-s|^{2}+\|\mathbf{x}-\mathbf{y}\|^{2}\right)^{1/2\left[\left(1-\frac{\beta
n}{\alpha +\gamma}\right)\wedge
(1-\beta )\wedge 2\Upsilon \right]}\right.\nonumber\\
& & +\left.\sqrt{\left(|t-s|^{2}
+\|\mathbf{x}-\mathbf{y}\|^{2}\right)^{\left(1-\frac{\beta n}{\alpha
+\gamma}\right)\wedge (1-\beta )\wedge 2\Upsilon}}\right]\nonumber\\
& &  =2C(D,T,\beta ,\alpha,\gamma,\Upsilon,n
)\|(t,\mathbf{x})-(s,\mathbf{y})\|^{\left(1-\frac{\beta n}{\alpha
+\gamma}\right)\wedge (1-\beta )\wedge 2\Upsilon},
\end{eqnarray}
\noindent as we wanted to prove, with $\widetilde{C}(D,T,\beta
,\alpha,\gamma,\Upsilon ,n)= 2C(D,T,\beta ,\alpha,\gamma,\Upsilon,n
),$ and $$C(D,T,\beta ,\alpha,\gamma,\Upsilon, n)=4([C(D)]^{2}\vee
Cg(T))\left(\frac{1}{2}\right)^{1/2\left[\left(1-\frac{\beta
n}{\alpha +\gamma}\right)\wedge (1-\beta )\wedge 2\Upsilon\right]
}.$$

\hfill $\blacksquare $
\end{proof}
\subsection{Sample-path properties}

 The following result provides the sample path local regularity
properties of the mean-square weak-sense Gaussian  solution $c$ to
equations (\ref{1.2})--(\ref{1.2b}). Note that the derived Gaussian
solution $c$ is not fractional differentiable in time in the
strong-sense (see Proposition \ref{pr2}). However, it is H\"older
continuous, in the mean square sense, under the conditions assumed
in the previous sections.
\begin{theo}
\label{thspp} Let $c$ be  defined in equations
(\ref{ris})--(\ref{se}). Under the conditions of Theorems
\ref{prtqv} and
\ref{th2}, with probability one, the following inequalities hold, as $s\rightarrow t,$ and $\Vert \mathbf{x}-\mathbf{y%
}\Vert \rightarrow 0,$
\begin{eqnarray}
&&\sup_{|t-s|<\delta }|c(t,\mathbf{x})-c(s,\mathbf{x})|^{2}  \notag \\
&\leq &Z\delta ^{\left(1-\frac{\beta n}{\alpha +\gamma}\right)\wedge
(1-\beta )}+H_{1}\delta ^{\left(1-\frac{\beta n}{\alpha
+\gamma}\right)\wedge (1-\beta )}\left[ \log \left( \frac{1}{\delta
}\right) \right] ^{1/2}  \notag
\\
&&\sup_{\Vert \mathbf{x}-\mathbf{y}\Vert <\delta }|c(t,\mathbf{x})-c(t,%
\mathbf{y})|^{2}  \notag
\end{eqnarray}
\begin{eqnarray}
&\leq &Y\delta ^{2\Upsilon}+H_{2}\delta ^{2\Upsilon}
\left[ \log \left( \frac{1}{\delta }\right) \right] ^{1/2}  \notag \\
&&\sup_{\Vert (t,\mathbf{x})-(s,\mathbf{y})\Vert <\delta }|c(t,\mathbf{x}%
)-c(s,\mathbf{y})|^{2}  \notag \\
&\leq &X\delta ^{\left(1-\frac{\beta n}{\alpha
+\gamma}\right)\wedge (1-\beta )\wedge 2\Upsilon}  \notag \\
&&+H_{3}\delta ^{\left(1-\frac{\beta n}{\alpha +\gamma}\right)\wedge
(1-\beta )\wedge 2\Upsilon}\left[ \log \left( \frac{1}{\delta
}\right) \right] ^{1/2},
\notag \\
&&
\end{eqnarray}%
\noindent where $Z,Y$ and $X$ are positive random variables, and
$H_{i},$
 $i=1,2,3,$ are positive constants that could depend on the geometrical
characteristics of the domain $D$ considered, like the boundary.
\end{theo}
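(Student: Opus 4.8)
The plan is to deduce the three displayed almost-sure bounds from the mean-square increment estimates already established, by invoking the classical entropy method for the modulus of continuity of Gaussian random fields, in the form of Theorem 3.3.3 of Adler \cite{Adler81}. First I would recall that, by Proposition \ref{pr2}, $c$ is a zero-mean Gaussian random field whose covariance kernel $R$ in (\ref{covop}) is continuous; hence the canonical pseudometric $d\big((t,\mathbf{x}),(s,\mathbf{y})\big)=\big(E[(c(t,\mathbf{x})-c(s,\mathbf{y}))^{2}]\big)^{1/2}$ is a well-defined continuous pseudometric on the compact set $(0,T]\times\overline{D}$, and $c$ has a separable, measurable version (being mean-square continuous, by the increment estimates below). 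The bounds to be fed into Adler's theorem are, respectively: $d\big((t,\mathbf{x}),(s,\mathbf{x})\big)^{2}\le [C(D)]^{2}g(t-s)$ with $g(t-s)=\mathcal{O}\big((t-s)^{(1-\beta n/(\alpha+\gamma))\wedge(1-\beta)}\big)$, from Theorem \ref{prtqv}; $d\big((t,\mathbf{x}),(t,\mathbf{y})\big)^{2}\le Cg(t)\|\mathbf{x}-\mathbf{y}\|^{2\Upsilon}$, from Theorem \ref{th2}; and $d\big((t,\mathbf{x}),(s,\mathbf{y})\big)^{2}\le \widetilde{C}\,\|(t,\mathbf{x})-(s,\mathbf{y})\|^{(1-\beta n/(\alpha+\gamma))\wedge(1-\beta)\wedge 2\Upsilon}$, from Theorem \ref{stinc}.

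Next I would note that, under the standing hypotheses of Theorems \ref{prtqv} and \ref{th2} --- namely $\beta<1/2$, $n/2<\alpha+\gamma$ and $\Upsilon>0$ --- all three exponents $(1-\beta n/(\alpha+\gamma))\wedge(1-\beta)$, $2\Upsilon$, and their minimum are strictly positive (the last being moreover less than $1$, cf. the proof of Theorem \ref{stinc}). Consequently, in each regime the canonical pseudometric is dominated by a positive power of the Euclidean distance, so the covering numbers $N(\varepsilon)$ of the parameter set in the $d$-metric grow at most polynomially in $1/\varepsilon$, and the metric-entropy integral $\int_{0^{+}}\sqrt{\log N(\varepsilon)}\,d\varepsilon$ converges. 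This is precisely the integrability condition needed to apply the Gaussian modulus-of-continuity result, and it also yields $d$-continuity, hence a.s. sample-path continuity, of $c$.

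I would then apply Theorem 3.3.3 of Adler \cite{Adler81} three times. For the temporal inequality, freeze $\mathbf{x}$ and apply the theorem to the centered Gaussian process $t\mapsto c(t,\mathbf{x})$ on $(0,T]$ with dominating function $\sigma(\delta)=C(D)\,\delta^{\frac{1}{2}[(1-\beta n/(\alpha+\gamma))\wedge(1-\beta)]}$; this produces an almost-surely finite positive random variable $Z$ and a deterministic constant $H_{1}$, depending only on $T,\beta,\alpha,\gamma,n$ and on $C(D)$ --- hence on the geometry of $D$, through the sup-norm of its Dirichlet eigenfunctions --- for which the stated bound holds, the factor $[\log(1/\delta)]^{1/2}$ being the one supplied by Adler's estimate. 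The spatial inequality follows identically, freezing $t$ and applying the theorem to $\mathbf{x}\mapsto c(t,\mathbf{x})$ on $\overline{D}\subset\mathbb{R}^{n}$ with dominating function $(Cg(t))^{1/2}\|\mathbf{h}\|^{\Upsilon}$, giving $Y$ and $H_{2}$; and the spatiotemporal inequality follows by applying the theorem to $c$ on $(0,T]\times\overline{D}$ with the dominating function of Theorem \ref{stinc}, giving $X$ and $H_{3}$. The local assertions ``as $s\to t$ and $\|\mathbf{x}-\mathbf{y}\|\to 0$'' are immediate consequences of these uniform-on-compact moduli of continuity.

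The main obstacle is bookkeeping rather than analysis: one must check that the dominating functions $\sigma$ are nondecreasing and satisfy the Dini-type condition $\int_{0^{+}}\sigma(u)\,u^{-1}(\log(1/u))^{1/2}\,du<\infty$ required by Adler's hypotheses (automatic once the exponents are positive), and --- for the temporal and spatial statements --- that the constants obtained can be chosen uniformly over the frozen variable as it ranges over a compact set; this is exactly where the finiteness and ($\mathbf{x}$- resp. $t$-) independence of $C(D)$, $C$ and $g(T)$ is used, ensuring that $Z,Y,X$ and $H_{1},H_{2},H_{3}$ do not degenerate. No analytic difficulty beyond Theorems \ref{prtqv}, \ref{th2} and \ref{stinc} arises.
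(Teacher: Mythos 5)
Your proposal is correct and follows essentially the same route as the paper: the paper's proof is exactly the observation that the mean-square increment bounds of Theorems \ref{prtqv}, \ref{th2} and \ref{stinc} furnish the dominating functions for the canonical pseudometric of the Gaussian field $c$, so that Theorem 3.3.3 of Adler \cite{Adler81} applied in time, in space, and in space-time yields the three almost-sure moduli of continuity with the stated $[\log(1/\delta)]^{1/2}$ correction. Your additional checks (positivity of the exponents, convergence of the entropy integral, uniformity of $C(D)$, $C$ and $g(T)$ over the frozen variable) are precisely the bookkeeping the paper leaves implicit.
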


 The proof directly follows from Theorems \ref{prtqv}--\ref{stinc},
and Theorem 3.3.3 in Adler \cite{Adler81}, p.57.

\section{Examples}
\label{examples}

\setcounter{section}{7}
 In the following subsections we consider some special cases of domain $D,$ where the
derived results can be applied.  Specifically, in the examples
introduced  below,  the  eigenfunctions of the Dirichlet negative
Laplacian operator can be explicitly computed, and Theorem
\ref{theorem:1} allows us to define the weak-sense mean-square
solution to  (\ref{1.2})--(\ref{1.2b}), in terms of such
eigenfunctions  as given in equations (\ref{ris})--(\ref{sr}) in
Proposition \ref{pr2} (see, for example, Grebenkov and Nguyen
\cite{Grebenkov13}).  The  conditions required in Theorems
\ref{prtqv}, \ref{th2}, \ref{stinc} and \ref{thspp}, for  the
continuity of $c$ in the mean-square sense and in the sample path
sense,  are also verified.

\subsection{Intervals, rectangles, parallelepipeds}

Let us consider the case where $D=(0,L_{1}),\dots ,(0,L_{n})\subset \mathbb{R%
}^{n},$ where $L_{i}>0,$ for $i=1,\dots ,n,$ the method of
separation of variables
 yields the following eigenvectors for the Dirichlet negative Laplacian operator:
\begin{equation}
\phi _{k_{1},\dots ,k_{n}}(x_{1},\dots
,x_{n})=\prod_{i=1}^{n}\sin\left( \frac{\pi
(k_{i}+1)x_{i}}{L_{i}}\right) , \quad  (k_{1},\dots ,k_{n})\in
\mathbb{N}_{*}^{n},\label{eigv}
\end{equation}%
\noindent with $\lambda _{k_{1},\dots ,k_{n}}=\lambda _{k_{1}}+\dots
,+\lambda _{k_{n}},$ and $\lambda _{k_{i}}=\frac{\pi ^{2}(k_{i}+1)^{2}}{%
L_{i}^{2}},$ for $i=1,\dots ,n.$ In this case, the fundamental
solution to the fractional space-time pseudodifferential equation
\begin{equation}
\frac{\partial ^{\beta }c}{\partial t^{\beta
}}(t,\mathbf{x})=-(-\Delta_{D} )^{\alpha /2}(I-\Delta_{D})^{\gamma
/2} c(t,\mathbf{x})  \label{fhb}
\end{equation}%
\begin{equation*}
t\in \mathbb{R}_{+},\quad \mathbf{x}\in D=(0,L_{1}),\dots ,(0,L_{n})
\end{equation*}%
\noindent is given, for $\alpha +\gamma >n,$  by
\begin{eqnarray}
&& G_{t-s}(x_{1},\dots ,x_{n};y_{1},\dots ,y_{n})
=\sum_{(k_{1},\dots k_{n})\in \mathbb{N}_{\ast }^{n}}E_{\beta
}\left( -\left(\sum_{i=1}^{n}\frac{\pi
^{2}(k_{i}+1)^{2}}{L_{i}^{2}}\right)^{\alpha /2}\right.\nonumber\\
&&\left.\hspace*{5cm}\times  \left(1+\left(\sum_{i=1}^{n}\frac{\pi
^{2}(k_{i}+1)^{2}}{L_{i}^{2}}\right)\right)^{\gamma /2}(t-s)^{\beta }\right)   \nonumber\\
& &\hspace*{2cm}\times\left[ \prod_{i=1}^{n}\sin\left( \frac{\pi (k_{i}+1)x_{i}}{L_{i}}%
\right) \right] \left[ \prod_{i=1}^{n}\sin\left( \frac{\pi (k_{i}+1)y_{i}}{%
L_{i}}\right) \right] ,\quad t\geq s   \nonumber\\
&&\hspace*{2cm}\mbox{and}\quad G(t,\mathbf{x};s,\mathbf{y})=0,\quad
s>t.  \nonumber
\end{eqnarray}%
From H\"{o}lder's inequality (see, for example, equation (6.4) in
Grebenkov and Nguyen \cite{Grebenkov13}),
\begin{equation*}
|\phi _{k_{1},\dots ,k_{n}}(x_{1},\dots ,x_{n})|\leq \sqrt{%
\prod_{i=1}^{n}L_{i}}\frac{\prod_{i=1}^{n}L_{i}}{2^{n/2+n}\pi
^{n/2}}.
\end{equation*}%
Therefore, in the previous computations in Theorems \ref{prtqv}, for
$\beta <1/2,$   we consider
\begin{equation*}
C(D)=\frac{\left[ \prod_{i=1}^{n}L_{i}\right] ^{3/2}}{2^{n/2+n}\pi
^{n/2}}.
\end{equation*}
\noindent Theorem \ref{th2} also holds, since
$\prod_{i=1}^{n}\sin\left( \frac{\pi (k_{i}+1)x_{i}}{L_{i}}\right)
,$ $(k_{1},\dots ,k_{n})\in \mathbb{N}_{*}^{n},$ are continuously
differentiable, and hence, H\"older continuous. Theorems \ref{stinc}
and \ref{thspp} then follow for $\beta <1/2,$ and, as before, for
$\alpha +\gamma >n.$

\subsection{Balls}

 Let us consider equations  (\ref{1.2})--(\ref{1.2b}) on the
ball. That is,
\begin{equation}
\frac{\partial ^{\beta }c}{\partial t^{\beta
}}(t,\mathbf{x})=-(-\Delta_{D} )^{\alpha /2}(I-\Delta_{D})^{\gamma
/2} c(t,\mathbf{x}) ,  \label{fhb}
\end{equation}%
\begin{equation*}
t\in \mathbb{R}_{+},\quad \mathbf{x}\in D=\{\mathbf{x}\in
\mathbb{R}^{n},\quad \Vert \mathbf{x}\Vert
<R\}=\mathcal{B}_{R}(\mathbf{0}),\ R>0.
\end{equation*}%
\noindent For $\alpha +\gamma >n,$ the Green function is defined as
\begin{eqnarray}
&&\hspace*{-1cm}G_{t-s}(\rho ,\theta ;\rho ^{\prime },\theta
^{\prime }) =\sum_{l=0}^{\infty }\sum_{r=1}^{\infty
}\sum_{m=1}^{h(n,l)}E_{\beta }\left(-(\mu _{l,r})^{\alpha /2}(1+\mu
_{l,r})^{\gamma /2}(t-s)^{\beta }\right)
\nonumber \\
&&\hspace*{3.5cm}\times \phi _{l,r,m}(\rho ,\theta )\phi
_{l,r,m}(\rho
^{\prime },\theta ^{\prime }),\quad t\geq s  \nonumber \\
&&\mbox{and}\quad G(t,\mathbf{x};s,\mathbf{y})=0,\quad s>t,
\nonumber
\end{eqnarray}%
\noindent where we have considered the spherical coordinates $\mathbf{x}%
=(\rho ,\theta ),$ $\mathbf{y}=(\rho ^{\prime },\theta ^{\prime }),$
and, as before, $E_{\beta }$ is the Mittag-Leffler function.
Moreover,
\begin{equation*}
(-\Delta )_{D}\phi _{l,r,m}(\rho ,\theta )=\mu _{l,r}\phi
_{l,r,m}(\rho ,\theta ),
\end{equation*}%
\noindent for $l\in \mathbb{N},$ with $m=1,2,\dots ,h(n,l),$ $r\in \mathbb{N}%
_{\ast }$ and $h(n,l)=\frac{(2l+n-2)(l+n-3)!}{(n-2)!l!},$ the number
of spherical harmonics. Here, the eigenvalues are given by $\mu
_{l,r}=\left( \frac{\xi _{l,r}^{2}}{R^{2}}\right) ,$ and the
eigenvectors $\phi
_{l,r,m}(\rho ,\theta )=c_{l,r,m}\mathcal{J}_{l+\frac{n-2}{2}}\left( \xi _{l+%
\frac{n-2}{2},r}\frac{\rho }{R}\right) \mathcal{S}_{l,m}(\theta )$
are
defined in terms of the Bessel function of the first kind of order $\nu ,$ $%
\mathcal{J}_{\nu },$ and the orthonormal spherical harmonics on the
sphere of radius one, $\mathcal{S}_{l,m}(\theta ).$ Note that
$c_{l,r,m}$ is the
normalizing constant, and $\xi _{\nu ,r}$ is the $r$th positive root of $%
\mathcal{J}_{\nu }.$

From H\"{o}lder inequality, since $\{\phi _{l,r,m}\}$ are normalized
in the
space of square integrable functions over the ball of radius $R,$ i.e.., $%
\Vert \phi _{l,r,m}\Vert _{2}=1,$ we obtain
\begin{equation*}
|\phi _{l,r,m}(\rho ,\theta )|\leq \Vert \phi _{l,r,m}\Vert _{1}\leq
\Vert
\phi _{l,r,m}\Vert _{2}\sqrt{\int_{\mathcal{B}_{R}(\mathbf{0})}d\mathbf{x}}=%
\sqrt{\frac{\pi ^{n/2}R^{n}}{\Gamma \left( \frac{n}{2}+1\right) }}
\end{equation*}%
\noindent (see, for example, equation (6.4) in Grebenkov and Nguyen
\cite{Grebenkov13}). Thus, in the previous computations in Theorem
\ref{prtqv}, we can consider
$C(D)=|\mathcal{B}_{R}(\mathbf{0})|^{1/2}.$ Theorem \ref{th2} also
holds, since Bessel functions of the first kind and order $\nu ,$ on
a closed interval,   and the orthonormal spherical harmonics on the
sphere of radius one, are H\"older continuous. Theorems \ref{stinc}
and \ref{thspp} then follow for $\beta <1/2,$ and, as before, for
$\alpha +\gamma >n.$

 \subsubsection*{Circular annulus}

 Let us now consider, for $\alpha +\gamma >n,$ and $\beta <1/2,$
\begin{equation}
\frac{\partial ^{\beta }c}{\partial t^{\beta
}}(t,\mathbf{x})=-(-\Delta_{D} )^{\alpha /2}(I-\Delta_{D})^{\gamma
/2} c(t,\mathbf{x}) ,  \label{fhb}
\end{equation}%
\begin{equation*}
t\in \mathbb{R}_{+},\quad \mathbf{x}\in D=\{(x_{1},x_{2})\in
\mathbb{R}^{2};\quad R_{0}<|x|<R\}.
\end{equation*}%
In polar coordinates, $x_{1}=r\cos \varphi ,$ $x_{2}=r\sin \varphi
,$ the Laplace operator $\Delta $ admits the expression
\begin{equation*}
\Delta =\frac{\partial ^{2}}{\partial r^{2}}+\frac{1}{r}\frac{\partial }{%
\partial r}+\frac{1}{r^{2}}\frac{\partial ^{2}}{\partial \varphi ^{2}}.
\end{equation*}

 The fundamental solution (in polar coordinates) of
$$\frac{\partial ^{\beta }c}{\partial t^{\beta
}}(t,\mathbf{x})=-(-\Delta_{D} )^{\alpha /2}(I-\Delta_{D})^{\gamma
/2} c(t,\mathbf{x}),$$

\noindent with $$-\Delta_{D}=-\left(\frac{\partial ^{2}}{\partial r^{2}}+\frac{1}{r}\frac{\partial }{%
\partial r}+\frac{1}{r^{2}}\frac{\partial ^{2}}{\partial \varphi ^{2}}\right),$$ \noindent  is then given by
\begin{eqnarray}
& &G_{t-s}(r,\varphi ,r^{\prime },\varphi ^{\prime
})=\sum_{n=0}^{\infty }\sum_{k=1}^{\infty }\sum_{l=1}^{2}E_{\beta
}\left( -\left(\alpha _{n,k}^{2}/R^{2}\right)^{\alpha
/2}\left(1+\left( \alpha _{n,k}^{2}/R^{2}\right)\right)^{\gamma
/2}(t-s)^{\beta }\right) \nonumber\\& & \hspace*{5cm} \times
u_{n,k,l}(r,\varphi )u_{n,k,l}(r^{\prime },\varphi ^{\prime
}),\nonumber
\end{eqnarray}

 \noindent where
\begin{equation*}
u_{n,k,l}(r,\varphi )=J_{n}(r\alpha _{n,k}/R)+c_{n,k}Y_{n}(r\alpha
_{n,k}/R)\times \left\{
\begin{array}{l}
\cos (n\varphi ),\ l=1 \\
\sin (n\varphi ),\ l=2 \ (n\neq 0),%
\end{array}%
\right.
\end{equation*}
 \noindent with $J_{n}$ and $Y_{n}$ being the Bessel functions of
the first and second kind, and the coefficients $\alpha _{n,k}$ and
$c_{n,k}$ being  set by the boundary conditions at $r=R_{0},$ and
$r=R.$
\begin{eqnarray*}
0 &=&\frac{\alpha _{n,k}}{R}\left[ J_{n}^{\prime }(\alpha
_{n,k})+c_{n,k}Y_{n}^{\prime }(\alpha _{n,k})\right] +h\left[
J_{n}(\alpha
_{n,k})+c_{n,k}Y_{n}(\alpha _{n,k})\right]  \\
0 &=&-\frac{\alpha _{n,k}}{R}\left[ J_{n}^{\prime }\left( \alpha _{n,k}\frac{%
R_{0}}{R}\right) +c_{n,k}Y_{n}^{\prime }\left( \alpha _{n,k}\frac{R_{0}}{R}%
\right) \right]
\nonumber\\
 &+&h\left[ J_{n}\left( \alpha _{n,k}\frac{R_{0}}{R}\right)
+c_{n,k}Y_{n}\left( \alpha _{n,k}\frac{R_{0}}{R}\right) \right] .
\end{eqnarray*}%
Using H\"{o}lder's inequality,
\begin{equation*}
|u_{n,k,l}(r,\varphi )|\leq \sqrt{\frac{\pi }{\Gamma \left( 2\right) }R^{2}-%
\frac{\pi }{\Gamma \left( 2\right) }R_{0}^{2}}\Vert u_{n,k,l}\Vert
_{2},
\end{equation*}%
\noindent where
\begin{eqnarray}
\Vert u_{n,k,l}\Vert _{2}^{2} &=&\frac{\pi (2-\delta
_{n,0})R^{2}}{2\alpha _{n,k}^{2}}\left[ \left( \alpha
_{n,k}^{2}+h^{2}R^{2}-n^{2}\right)
v_{n,k}^{2}(R)\right.   \notag \\
&&\left. -\left( (\alpha _{n,k}^{2}+h^{2}R^{2})\frac{R_{0}^{2}}{R^{2}}%
-n^{2}\right) v_{n,k}^{2}(R_{0})\right] ,  \label{eqb1}
\end{eqnarray}%
\noindent with
\begin{equation}
v_{n,k}(r)=J_{n}(\alpha _{n,k}r/R)+c_{n,k}Y_{n}(\alpha _{n,k}r/R).
\label{eqb2}
\end{equation}%
From equations (\ref{eqb1}) and (\ref{eqb2}),  the uniform upper
bound for the eigenvectors of Dirichlet negative Laplacian operator
on circular annulus is then given by
\begin{equation*}
C(D)=\sqrt{\frac{\pi }{\Gamma \left( 2\right) }R^{2}-\frac{\pi
}{\Gamma \left( 2\right) }R_{0}^{2}}\pi R^{2}(1+M_{1})M_{2},
\end{equation*}%
\noindent where
\begin{eqnarray}
\frac{h^{2}R^{2}}{\alpha _{n,k}^{2}} &\leq &M_{1}  \notag \\
v_{n,k}^{2}(R) &\leq &M_{2},  \notag
\end{eqnarray}%
\noindent since Bessel functions of the first and second kind are
uniformly bounded for $0<R_{0}<r<R.$ Furthermore, Bessel functions
of the first and second kind on a closed bounded interval, as well
as sine and cosine are also H\"older continuous.  Thus, Theorem
\ref{th2}  holds. Summarizing, Theorems \ref{prtqv}, \ref{th2},
\ref{stinc} and \ref{thspp} hold for $\alpha +\gamma>n,$ and $\beta
<1/2.$

\subsubsection*{Elliptical annulus}

 In elliptic coordinates, $x_{1}=a\cosh r\cos \theta,$
$x_{2}=a\sinh r \sin \theta,$ the Laplace operator adopts the form:
\begin{equation*}
\Delta =\frac{1}{a^{2}(\sinh^{2} r+\sin^{2} \theta
)}\left(\frac{\partial ^{2}}{\partial
r^{2}}+\frac{\partial^{2}}{\partial \theta ^{2}}\right),
\end{equation*}
\noindent where $r\geq 0,$ and $0\leq \theta <2\pi,$ are the radial
and angular coordinates, and $a > 0$ is the prescribed distance
between the origin and the foci. An ellipse is a curve of constant
$r = R$ so that its
points $(x_{1}, x_{2})$ satisfy $x_{1}^{2}/ A^{2} + x^{2}/B^{2} = 1,$ where $%
A = a \cosh R,$ and $B = a\sinh R$ are the major and minor
semi-axes, and $R$ denotes the radius of the ellipse. The
eccentricity $e = a/A = 1/\cosh R$ is strictly positive. The
interior of an ellipse is characterized by $0\leq \theta <2\pi$ and
$0\leq r <R.$ An elliptical annulus, the interior between two
ellipses with the same foci, can be characterized in elliptic
coordinates $(r,\theta)$ with
 $R_{0} < r < R$ and $0\leq \theta <2\pi.$

 In elliptic coordinates, the variable separation method,
$u(r,\theta )=g(\theta )f(r),$ leads to the following equations,
after considering that
the two differential equations in $\theta $ and $r$ are equal to a constant $%
\kappa,$
\begin{eqnarray}
& &g^{\prime \prime }(\theta )+(\kappa-2q\cos 2\theta )g(\theta )=0
\label{Mthieueq0} \\
& &f^{\prime \prime }(r)-(\kappa-2q\cosh 2r\theta )f(r)=0.
\label{Mthieueq}
\end{eqnarray}
These equations are respectively known as the Mathieu equation and
the modified Mathieu equation, where $q=\lambda a^{2}/4,$ and the
parameter $\kappa $ is called the characteristic value of Mathieu
functions, whose values lead to a real integer value of the
characteristic exponent $\nu$ of the solution defined according to
Floquet's theorem. The two linearly independent periodic solutions
of equation (\ref{Mthieueq0}) are known as the angular Mathieu
functions, and they are respectively denoted as $\kappa e_{n}(\theta
,q)$ and $se_{n+1}(\theta, q),$ $n=0,1,2,\dots .$ That is, we
consider $\kappa $ such that the characteristic exponent $\nu $
satisfies $\nu(\kappa ,q)\in \mathbb{Z},$ leading to the referred
angular periodic Mathieu functions. There are two linearly
independent oscillatory radial Mathieu functions of the first kind,
solution to equation (\ref{Mthieueq}), respectively denoted as $%
M\kappa_{n}^{(1)}(r,q)$ and $M\kappa_{n}^{(2)}(r,q),$ corresponding
to the same $\kappa $ as $\kappa e_{n}(\theta ,q).$ In addition,
there is two linearly independent oscillatory radial Mathieu
functions of the second kind $Ms_{n+1}^{(1)}(r,q)
$ and $Ms_{n+1}^{(2)}(r,q)$ corresponding to the same $s$ as $%
se_{n+1}(\theta, q)$ (see, for example, Guti\'errez-Vega \emph{et.
al.} \cite{GutierrezVega02}). Thus we have four families
$l=1,2,3,4,$ of eigenfunctions of Laplacian operator in an
elliptical domain:
\begin{eqnarray}
u_{nk1}(r,\theta )&=& \kappa e_{n}(\theta
,q_{nk1})M\kappa_{n}^{(1)}(r,q_{nk1}) \notag
\\
u_{nk1}(r,\theta )&=& \kappa e_{n}(\theta
,q_{nk2})M\kappa_{n}^{(2)}(r,q_{nk2}) \notag
\\
u_{nk1}(r,\theta )&=& se_{n+1}(\theta,
q_{nk3})Ms_{n+1}^{(1)}(r,q_{nk3})
\notag \\
u_{nk1}(r,\theta )&=& se_{n+1}(\theta,
q_{nk4})Ms_{n+1}^{(2)}(r,q_{nk4}). \label{eeigenvectorLaplacian}
\end{eqnarray}

For the elliptical annulus with Dirichlet boundary conditions having radius $%
0<R_{0}<R$, there are eight individual equations defining the
parameter $q$ for each $n=0,1,2,\dots ,$
\begin{eqnarray}
M\kappa_{n}^{(1)}(R,q_{nk1})&=&0;\ M\kappa_{n}^{(2)}(R,q_{nk2})=0\nonumber\\
Ms_{n+1}^{(1)}(R,q_{nk3})&=&0;\ Ms_{n+1}^{(2)}(R,q_{nk4})=0\nonumber\\
M\kappa_{n}^{(1)}(R_{0},q_{nk1})&=&0;\
M\kappa_{n}^{(2)}(R_{0},q_{nk2})=0\nonumber\\
Ms_{n+1}^{(1)}(R_{0},q_{nk3})&=&0;\
Ms_{n+1}^{(2)}(R_{0},q_{nk4})=0.\nonumber
\end{eqnarray}
The fundamental solution (in elliptic coordinates) of
$$\frac{\partial ^{\beta }c}{\partial t^{\beta
}}(t,\mathbf{x})=-(-\Delta_{D} )^{\alpha /2}(I-\Delta_{D})^{\gamma
/2} c(t,\mathbf{x}),$$

 \noindent for $\alpha + \gamma >n,$ with $-\Delta_{D}
=-\frac{1}{a^{2}(\sinh^{2} r+\sin^{2} \theta )}\left(\frac{\partial
^{2}}{\partial r^{2}}+\frac{\partial^{2}}{\partial \theta
^{2}}\right),$ is then given by
\begin{eqnarray}
& & G_{t-s}(r,\theta ,r^{\prime },\theta ^{\prime
})=\sum_{n=0}^{\infty }\sum_{k=1}^{\infty }\sum_{l=1}^{4}E_{\beta
}\left(
-\left([4q_{nkl}/a^{2}]\right)^{\alpha /2}\right.\nonumber\\
& & \hspace*{6cm}\left.\times
\left(1+\left([4q_{nkl}/a^{2}]\right)\right)^{\gamma /2}(t-s)^{\beta
}\right)
\nonumber\\
& & \hspace*{5cm}\times
 u_{n,k,l}(r,\theta )u_{n,k,l}(r^{\prime },\theta ^{\prime
}),\nonumber
\end{eqnarray}

\noindent where $\{u_{n,k,l}\}$ are given in equation
(\ref{eeigenvectorLaplacian}).

 From H\"older inequality
\begin{equation*}
|u_{n,k,l}(r,\theta )|\leq M\left[\pi (a\cosh R)(a\sinh R)-\pi
(a\cosh R_{0})(a\sinh R_{0})\right],
\end{equation*}
\noindent where $M=\sup_{n,k,l}\max_{(r,\theta )}u_{nkl}(r,\theta
),$ since angular Mathieu functions, for $0\leq \theta <2\pi,$ and
radial Mathieu functions of first and second kind, for $0<R_{0} < r
< R,$ are uniformly bounded (see, for example, Guti\'errez-Vega
\emph{et. al.} \cite{GutierrezVega02}).
 The uniform upper bound,  in  Theorems \ref{prtqv}, for the eigenvectors of Dirichlet negative Laplacian operator on $D$ is then given by
\begin{equation*}
C(D)=M\left[ \pi (a\cosh R)(a\sinh R)-\pi (a\cosh R_{0})(a\sinh
R_{0})\right].
\end{equation*}
Finally, since angular Mathieu functions, and radial Mathieu
functions of the first and second kind   on a closed bounded
interval are H\"older continuous functions, Theorem \ref{th2} also
holds. As before, Theorems \ref{prtqv}, \ref{th2}, \ref{stinc} and
\ref{thspp} follow for $\alpha +\gamma>n,$ and $\beta <1/2.$

\section{Fractional polynomials of the Dirichlet negative Laplacian operator on $D$}
\label{FC1}

\setcounter{section}{8}

The results formulated in this paper hold under a more general
scenario. Specifically, in equations (\ref{1.2})--(\ref{1.2b}), we
can replace $(-\Delta_{D} )^{\alpha /2}(I-\Delta_{D} )^{\gamma /2}$
by a fractional elliptic polynomial of the form
\begin{equation}P\left((-\Delta_{D} )^{\alpha /2}(I-\Delta_{D}
)^{\gamma /2}\right)=\sum_{l=0}^{p}c_{l}\left[(-\Delta_{D} )^{\alpha
/2}(I-\Delta_{D} )^{\gamma /2}\right]^{l},\label{eqpol}
 \end{equation}\noindent  of degree $p,$  and with  constant coefficients $c_{l}\geq 0,$ $l=0,\dots,p-1,$ and $c_{p}>0.$  Thus, the following reformulation of equation (\ref{1.2}) is considered
\begin{equation}
\frac{\partial ^{\beta }}{\partial t^{\beta }}c\left(
t,\mathbf{x}\right) +P\left((-\Delta_{D} )^{\alpha /2}(I-\Delta_{D}
)^{\gamma /2}\right)c\left( t,\mathbf{x}\right)=I^{1-\beta
}_{t}\varepsilon \left( t,\mathbf{x}\right),\quad \mathbf{x}\in
D,\label{1.2pp}
\end{equation}

\noindent with the  boundary and initial conditions given in
(\ref{1.2b}), and with \linebreak $P\left((-\Delta_{D} )^{\alpha
/2}(I-\Delta_{D} )^{\gamma /2}\right)$
 being defined  in (\ref{eqpol}). Here, as before, $\varepsilon $ represents Gaussian space-time white noise. The next result provides the extension of the previously established statements, for equations (\ref{1.2})--(\ref{1.2b}), to equation (\ref{1.2pp}).
\begin{theo}
\label{thfg} The following assertions hold:
\begin{itemize}
\item[(i)]For $n<p(\alpha +\gamma ),$ \begin{equation}
\sum_{k=1}^{\infty }E_{\beta }\left( -\lambda _{k}\left(
P\left((-\Delta_{D} )^{\alpha /2}(I-\Delta_{D} )^{\gamma
/2}\right)\right) t^{\beta }\right) <\infty ,  \label{tracep0}
\end{equation}

\noindent  for every $t>0,$ where, for each $k\geq 1$
\begin{equation}
P\left((-\Delta_{D} )^{\alpha /2}(I-\Delta_{D} )^{\gamma
/2}\right)\phi _{k}=\lambda _{k}\left( P\left((-\Delta_{D} )^{\alpha
/2}(I-\Delta_{D} )^{\gamma /2}\right)\right) \phi _{k}.
\label{sebbvv}
\end{equation}

\noindent For $n<p(\alpha +\gamma ),$ the weak-sense solution on
$\overline{H}^{p(\alpha +\gamma )}(D)$ to (\ref{1.2pp}), in the
mean-square sense, with  boundary and initial conditions
(\ref{1.2b}) is then given by

\begin{equation}
c(t,\mathbf{x})=\int_{0}^{t}\int_{D}G^{D}(t,\mathbf{x};s,\mathbf{y})\varepsilon
(s,\mathbf{y})dsd\mathbf{y},  \label{risbb}
\end{equation}
\noindent where the integral is understood in the mean-square sense,
$\varepsilon (s,\mathbf{y})$ is space-time zero-mean Gaussian white
noise as given in equation (\ref{1.2pp}), and, for $t\geq s,$

\begin{eqnarray}
&&G^{D}(t,\mathbf{x};s,\mathbf{y})=\nonumber\\
&& =\sum_{k\geq 1}E_{\beta }\left( -\lambda _{k}\left(
P\left((-\Delta_{D} )^{\alpha /2}(I-\Delta_{D} )^{\gamma
/2}\right)\right) (t-s)^{\beta }\right) \phi _{k}(\mathbf{x})\phi
_{k}(\mathbf{y}),\  \nonumber \\
&&G^{D}(t,\mathbf{x};s,\mathbf{y})=0,\ s>t,  \label{srbb}
\end{eqnarray}
\noindent with $\{\phi _{k}\}_{k\geq 1}$ and $\left\{\lambda
_{k}\left( P\left((-\Delta_{D} )^{\alpha /2}(I-\Delta_{D} )^{\gamma
/2}\right)\right)\right\}_{k\geq 1}$ satisfying (\ref{sebbvv}).
\item[(ii)] For $\beta
<1/2,$ and  $\frac{n}{2} <p(\alpha +\gamma ),$ the following
inequality holds:
\begin{equation}
E[c(t,\mathbf{x})-c(s,\mathbf{x})]^{2}\leq [C(D)]^{2} g(t-s),
\label{eineqti3bb}
\end{equation}
\noindent where $C(D)$ is defined as in Theorem \ref{prtqv}, and
\begin{equation}
g(t-s)=\mathcal{O}\left((t-s)^{\left(1-\frac{\beta n}{p(\alpha
+\gamma)}\right)\wedge (1-\beta )}\right), \quad s\rightarrow t, \
0<s<t.\label{eqth1bb}
\end{equation}

\item[(iii)] For $n<p(\alpha +\gamma),$ assume that the uniform H\"older continuity of the   eigenvectors of the Dirichlet negative Laplacian operator holds, as given in  Theorem \ref{th2},  considering  $\Vert \mathbf{x}-\mathbf{y}\Vert $
sufficiently small,
\begin{equation}
E[c(t,\mathbf{x})-c(t,\mathbf{y})]^{2}\leq
Cg(t)\|\mathbf{x}-\mathbf{y}\|^{2\Upsilon }, \label{si}
\end{equation}%
\noindent where $C$ is given in  Theorem \ref{th2}, and
\begin{eqnarray}
g(t)&=&t^{1-\beta}\sum_{k=1}^{\infty }\frac{\Gamma (1+\beta
)}{\lambda_{k}\left(P\left((-\Delta_{D} )^{\alpha /2}(I-\Delta_{D}
)^{\gamma /2}\right)\right)} ,\quad t>0.\nonumber
\end{eqnarray}
\end{itemize}
\item[(iv)] For $\beta
<1/2,$ and  $n <p(\alpha +\gamma ),$  assume that the  uniform
H\"older continuity of
the eigenvectors of the Dirichlet negative Laplacian operator holds, as $s\rightarrow t,$ $s,t\in (0,T],$ and $\Vert
\mathbf{x}-\mathbf{y}\Vert \rightarrow 0,$
\begin{eqnarray*}
&&E[c(t,\mathbf{x})-c(s,\mathbf{y})]^{2} \\
&\leq &\widetilde{C}(D,T,\beta ,\alpha,\gamma,p,\Upsilon
,n)\|(t,\mathbf{x})-(s,\mathbf{y})\|^{\left(1-\frac{\beta n}{p(\alpha
+\gamma})\right)\wedge (1-\beta )\wedge 2\Upsilon} ,
\end{eqnarray*}
\noindent where $$\widetilde{C}(D,T,\beta ,\alpha,\gamma,p,\Upsilon
,n)=8([C(D)]^{2}\vee
Cg(T))\left(\frac{1}{2}\right)^{1/2\left[\left(1-\frac{\beta
n}{(\alpha +\gamma )p}\right)\wedge (1-\beta )\wedge
2\Upsilon\right]. }.$$
\item[(v)] For $\beta
<1/2,$ and  $n<p(\alpha +\gamma ),$ under  the  uniform H\"older
continuity of the
eigenvectors of the Dirichlet negative Laplacian operator, as $s\rightarrow t,$ $s,t\in (0,T],$ and $\Vert
\mathbf{x}-\mathbf{y}\Vert \rightarrow 0,$
\begin{eqnarray}
&&\sup_{|t-s|<\delta }|c(t,\mathbf{x})-c(s,\mathbf{x})|^{2}  \notag \\
&\leq &\widetilde{Z}\delta ^{\left(1-\frac{\beta n}{(\alpha
+\gamma)p}\right)\wedge (1-\beta )}+\widetilde{H}_{1}\delta
^{\left(1-\frac{\beta n}{(\alpha +\gamma)p}\right)\wedge (1-\beta
)}\left[ \log \left( \frac{1}{\delta }\right) \right] ^{1/2}  \notag
\\
&&\sup_{\Vert \mathbf{x}-\mathbf{y}\Vert <\delta }|c(t,\mathbf{x})-c(t,%
\mathbf{y})|^{2}  \notag \\
&\leq &\widetilde{Y}\delta ^{2\Upsilon}+\widetilde{H}_{2}\delta
^{2\Upsilon}
\left[ \log \left( \frac{1}{\delta }\right) \right] ^{1/2}  \notag \\
&&\sup_{\Vert (t,\mathbf{x})-(s,\mathbf{y})\Vert <\delta }|c(t,\mathbf{x}%
)-c(s,\mathbf{y})|^{2}  \notag \\
&\leq &\widetilde{X}\delta ^{\left(1-\frac{\beta n}{(\alpha
+\gamma)p}\right)\wedge (1-\beta )\wedge 2\Upsilon}  \notag \\
&&+\widetilde{H}_{3}\delta ^{\left(1-\frac{\beta n}{(\alpha
+\gamma)p}\right)\wedge (1-\beta )\wedge 2\Upsilon}\left[ \log
\left( \frac{1}{\delta }\right) \right] ^{1/2},
\notag \\
&&
\end{eqnarray}%
\noindent where $\widetilde{Z},\widetilde{Y}$ and $\widetilde{X}$
are positive random variables, and $\widetilde{H}_{i},$ $i=1,2,3,$
are positive constants that could depend on the geometrical
characteristics of the domain $D$ considered, like the boundary.
\end{theo}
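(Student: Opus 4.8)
The plan is to reduce everything to a scalar spectral computation and then re-run, essentially verbatim, the arguments of Propositions \ref{pr1}--\ref{pr2} and Theorems \ref{prtqv}--\ref{thspp}, with the effective exponent $\alpha+\gamma$ systematically replaced by $p(\alpha+\gamma)$. The first step, and the only genuinely new one, is the spectral reduction. Writing $f(u)=u^{\alpha/2}(1+u)^{\gamma/2}$, we have $(-\Delta_{D})^{\alpha/2}(I-\Delta_{D})^{\gamma/2}=f(-\Delta_{D})$, so $P\big((-\Delta_{D})^{\alpha/2}(I-\Delta_{D})^{\gamma/2}\big)=(P\circ f)(-\Delta_{D})$ with $P\circ f$ continuous on $[0,\infty)$. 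Applying equation (\ref{5}) of Theorem \ref{theorem:1} (equivalently the composition identity of Theorem \ref{theorem:2}) to the spectral family of $(-\Delta_{D})$ --- which, as in the proof of Corollary \ref{cor1}, is a pure point measure with atoms at $\{\gamma_{k}(-\Delta_{D})\}$ and kernel $\sum_{k}\phi_{k}(\mathbf{u})\phi_{k}(\mathbf{v})$ --- yields $\lambda_{k}\big(P((-\Delta_{D})^{\alpha/2}(I-\Delta_{D})^{\gamma/2})\big)=P\big(\lambda_{k}((-\Delta_{D})^{\alpha/2}(I-\Delta_{D})^{\gamma/2})\big)$ together with (\ref{sebbvv}). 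Since $c_{l}\geq 0$ and $c_{p}>0$, the polynomial $P$ is strictly increasing on $\mathbb{R}_{+}$, so the eigenvalues ordered increasingly are the images under $P$ of the increasingly ordered $\lambda_{k}((-\Delta_{D})^{\alpha/2}(I-\Delta_{D})^{\gamma/2})$, and $P(x)/x^{p}\to c_{p}$ as $x\to\infty$. Combining this with (\ref{jao}) in Corollary \ref{cor1} gives
\begin{equation*}
\lim_{k\to\infty}\frac{\lambda_{k}\big(P((-\Delta_{D})^{\alpha/2}(I-\Delta_{D})^{\gamma/2})\big)}{k^{p(\alpha+\gamma)/n}}=c_{p}\,\big[\widetilde{c}(n,\alpha+\gamma)\big]^{p}\,|D|^{-p(\alpha+\gamma)/n},
\end{equation*}
and hence a two-sided bound $L_{1}k^{p(\alpha+\gamma)/n}\leq\lambda_{k}(P(\cdots))\leq L_{2}k^{p(\alpha+\gamma)/n}$ for $k\geq k_{0}$, the exact analogue of (\ref{ipp}).

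With this two-sided bound, parts (i)--(iii) follow by substitution. The proof of Proposition \ref{pr1} carries over, the convergence criterion now being $\int_{0}^{\infty}u^{n/(p(\alpha+\gamma))-1}/(1+[\Gamma(1+\beta)]^{-1}u)\,du<\infty$, which holds precisely when $n<p(\alpha+\gamma)$; this proves (\ref{tracep0}) and the trace-class property needed to define the Green operator with kernel (\ref{srbb}). That $c$ in (\ref{risbb}) solves (\ref{1.2pp})--(\ref{1.2b}) weakly over $\overline{H}^{p(\alpha+\gamma)}(D)$ repeats the computation of Proposition \ref{pr2}: the eigenvalue problem $\tfrac{d^{\beta}}{dt^{\beta}}T=-\mu T$ still has $E_{\beta}(-\mu t^{\beta})$ as solution, term-by-term differentiation is justified by the triangle/Cauchy--Schwarz estimate (\ref{fsd}) with test functions in $\overline{H}^{p(\alpha+\gamma)}(D)$, and $\sum_{k}\phi_{k}(\mathbf{x})\phi_{k}(\mathbf{y})=\delta(\mathbf{x}-\mathbf{y})$ is unchanged. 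For (ii) one re-runs the chain (\ref{eq11ttt})--(\ref{ineqfundb}) of Theorem \ref{prtqv} with $\alpha+\gamma$ replaced by $p(\alpha+\gamma)$, obtaining (\ref{eqth1bb}); the hypothesis $\beta<1/2$ again makes the $s^{1-2\beta}$ term non-dominant near $s=t$. For (iii) one repeats the estimate of Theorem \ref{th2}, where finiteness of $g(t)$ reduces to $\sum_{k}\Gamma(1+\beta)/\lambda_{k}(P(\cdots))<\infty$, which is exactly $n<p(\alpha+\gamma)$.

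Parts (iv) and (v) are then mechanical. From (ii) and (iii) one derives (iv) exactly as Theorem \ref{stinc} follows from Theorems \ref{prtqv}--\ref{th2}: split $c(t,\mathbf{x})-c(s,\mathbf{y})$ into a temporal and a spatial increment, apply Cauchy--Schwarz to the cross term, and use Jensen's inequality for the concave function $x^{\xi}$ with $\xi=\big(1-\tfrac{\beta n}{p(\alpha+\gamma)}\big)\wedge(1-\beta)\wedge 2\Upsilon\in(0,1)$. Part (v) follows from (iv) combined with Theorem 3.3.3 of Adler \cite{Adler81}, just as Theorem \ref{thspp} followed from Theorem \ref{stinc}.

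The main obstacle is entirely confined to the first step: one must verify carefully that nonnegativity of the coefficients together with $c_{p}>0$ makes $P$ monotone on $\mathbb{R}_{+}$, so that the ordering of eigenvalues is preserved and $p(\alpha+\gamma)$ is indeed the correct effective asymptotic exponent, and that $P$ applied to the self-adjoint operator acts on $\phi_{k}$ by multiplication by $P(\lambda_{k}(\cdots))$. Once that is established, every subsequent estimate is a \emph{routine} substitution $\alpha+\gamma\rightsquigarrow p(\alpha+\gamma)$ in the already-proved inequalities, with no new analytic difficulty.
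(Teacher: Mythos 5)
Your proposal is correct and follows essentially the same route as the paper's own proof: apply the spectral calculus of Theorem \ref{theorem:1} with $f(u)=P\left(u^{\alpha/2}(1+u)^{\gamma/2}\right)$ to get $\lambda_{k}\left(P\left((-\Delta_{D})^{\alpha/2}(I-\Delta_{D})^{\gamma/2}\right)\right)=P\left(\lambda_{k}\left((-\Delta_{D})^{\alpha/2}(I-\Delta_{D})^{\gamma/2}\right)\right)$, deduce the asymptotic order $k^{p(\alpha+\gamma)/n}$ and the two-sided bounds, and then rerun Propositions \ref{pr1}--\ref{pr2} and Theorems \ref{prtqv}, \ref{th2}, \ref{stinc}, \ref{thspp} with $\alpha+\gamma$ replaced by $p(\alpha+\gamma)$. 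Your explicit remarks on the monotonicity of $P$ on $\mathbb{R}_{+}$ and the identification of the limit constant as $c_{p}[\widetilde{c}(n,\alpha+\gamma)]^{p}|D|^{-p(\alpha+\gamma)/n}$ are harmless refinements of what the paper leaves implicit.
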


\begin{proof}
\noindent (i) As, in  Corollary \ref{cor1}, we apply equation
(\ref{5}) in Theorem \ref{theorem:1}, considering $f(u)= P(u^{\alpha
/2}(1+u)^{\gamma /2}),$ with $P(u)=\sum_{l=0}^{p}c_{l}u^{l}$ given
in equation (\ref{eqpol}), to obtaining
\begin{equation} \lambda _{k}\left(P\left((-\Delta_{D})^{\alpha
/2}(I-\Delta_{D} )^{\gamma /2}\right)\right)=
\sum_{l=0}^{p}c_{l}\left[\left(\gamma_{k}(-\Delta_{D}
)\right)^{\alpha /2}(1+\gamma_{k}(-\Delta_{D} ))^{\gamma
/2}\right]^{l}, \label{fsbb}
\end{equation}
\noindent where, as before, $\{\gamma_{k}(-\Delta_{D} )\}_{k\geq 1}$
denotes the eigenvalues of the Dirichlet negative Laplacian operator
arranged in decreasing order of their magnitude. Since
\begin{equation}
\gamma_{k}(-\Delta_{D} )\sim 4\pi\frac{\left(\Gamma \left( 1+\frac{n}{2}%
\right)\right)^{2/n}}{|D|^{2/n}}k^{2/n},\quad k\longrightarrow
\infty , \label{tseLapacbb}
\end{equation}
\noindent  (see, for example,  Chen and  Song \cite{ChenSong05}), we
obtain
\begin{equation}
\lim_{k\longrightarrow \infty }\frac{\lambda _{k}\left(
P\left((-\Delta_{D})^{\alpha /2}(I-\Delta_{D} )^{\gamma
/2}\right)\right) }{k^{(\alpha +\gamma )p
/n}}=\widetilde{c}(n,\alpha +\gamma,p )|D|^{-p(\gamma +\alpha )/n},
\label{eqfa2bb}
\end{equation}
\noindent where $\widetilde{c}(n,\alpha +\gamma )$ is a positive
constant depending on $n,$ $\alpha ,$  $\gamma$ and $p.$

Futhermore, equation (\ref{5}) in Theorem \ref{theorem:1} also
implies  the following equality: For $k\geq 1,$
\begin{equation}
P\left((-\Delta_{D})^{\alpha /2}(I-\Delta_{D} )^{\gamma
/2}\right)\phi_{k}=\lambda _{k}\left(P\left((-\Delta_{D})^{\alpha
/2}(I-\Delta_{D} )^{\gamma /2}\right)\right)\phi_{k},
\label{eqeivectorLbb}
\end{equation}
\noindent  for the eigenvector system  $\{\phi_{k}\}_{k\geq 1}$ of
the Dirichlet negative Laplacian operator $(-\Delta_{D} )$ on domain
$D.$

From equation (\ref{eqfa2bb}), there exists $k_{0}$ such that for
$k\geq k_{0},$
\begin{equation}\widetilde{L}_{1}k^{p(\alpha+\gamma )/n}\leq \lambda_{k}\left(P\left((-\Delta_{D} )^{\alpha /2}(I-\Delta_{D} )^{\gamma
/2}\right)\right)\leq \widetilde{L}_{2} k^{p(\alpha+\gamma
)/n},\label{ippbb}
\end{equation} \noindent for
certain positive constants $0<\widetilde{L}_{1}<\widetilde{L}_{2},$
depending on $k_{0},$ and $p(\alpha +\gamma )$ and $n.$  In
particular, for $k\geq k_{0},$
\begin{eqnarray}& & \frac{1}{1+[\Gamma (1+\beta
)]^{-1}\lambda_{k}\left(P\left((-\Delta_{D})^{\alpha
/2}(I-\Delta_{D} )^{\gamma /2}\right)\right)t^{\beta }}\nonumber\\
& & \hspace*{1.5cm}\leq \frac{1}{1+[\Gamma (1+\beta )]^{-1}
\widetilde{L}_{1}k^{p(\alpha+\gamma )/n}t^{\beta }}.
\label{eivaso3bb}
\end{eqnarray}

From  Lemma \ref{th4s}, in a similar way to Proposition \ref{pr1},
for each fixed $t>0,$

\begin{eqnarray}
& & \sum_{k=1}^{\infty }E_{\beta }\left(-t^{\beta
}\lambda_{k}\left(P\left((-\Delta_{D})^{\alpha /2}(I-\Delta_{D}
)^{\gamma
/2}\right)\right)\right)\nonumber\\
& &\leq M(\beta,\alpha,\gamma,p,n)+\frac{t^{-\beta n/p(\alpha
+\gamma)}}{p(\alpha
+\gamma)/n}\int_{0}^{\infty}\frac{u^{\frac{n}{p(\alpha +\gamma  )
}-1}}{1+[\Gamma (1+\beta )]^{-1}u}du<\infty ,\nonumber\\
 \label{pfpbb}
\end{eqnarray}

\noindent since
\begin{equation}M(\beta,\alpha,\gamma,p,n)=\sum_{k=1}^{k_{0}}E_{\beta
}\left(-t^{\beta }\lambda_{k}\left(P\left((-\Delta_{D} )^{\alpha
/2}(I-\Delta_{D} )^{\gamma
/2}\right)\right)\right)<\infty,\label{eqp1bbc}\end{equation}
\noindent and $\int_{0}^{\infty}\frac{u^{\frac{n}{p(\alpha +\gamma
)}-1}}{1+[\Gamma (1+\beta )]^{-1}u}du<\infty,$ for $p(\alpha +\gamma
)
>n.$

Applying triangle and Cauchy-Schwarz inequalities, since $\psi \in \overline{H}^{p(\alpha +\gamma )}(D),$ and, for $p(\alpha
+\gamma)>n$ equation (\ref{pfpbb}) holds, we obtain
\begin{eqnarray}
& & \left|\sum_{k=1}^{\infty} \frac{\partial ^{\beta }}{\partial t^{\beta
}} E_{\beta }\left( - \lambda_{k}\left( P\left((-\Delta_{D}
)^{\alpha /2}(I-\Delta_{D})^{\gamma /2}\right)\right)t^{\beta
}\right) \phi _{k}(\mathbf{x})\psi_{k}\right|\nonumber\\ & & \leq
\sum_{k=1}^{\infty} \lambda_{k}\left( P\left((-\Delta_{D} )^{\alpha
/2}(I-\Delta_{D} )^{\gamma /2}\right)\right) \nonumber\\ &
&\hspace*{1cm}\times E_{\beta }\left( -
\lambda_{k}\left(P\left((-\Delta_{D} )^{\alpha /2}(I-\Delta
_{D})^{\gamma /2}\right)\right)t^{\beta }\right) |\phi
_{k}(\mathbf{x})|\psi_{k}\nonumber\\ & & \leq C(D)\sum_{k=1}^{\infty}
\lambda_{k}\left( P\left((-\Delta _{D})^{\alpha /2}(I-\Delta_{D}
)^{\gamma /2}\right)\right)\nonumber\\ & &\hspace*{1cm}\times
E_{\beta }\left( - \lambda_{k}\left( P\left((-\Delta_{D} )^{\alpha
/2}(I-\Delta_{D} )^{\gamma /2}\right)\right)t^{\beta
}\right)\psi_{k}<\infty,\nonumber\\\label{fsd}
\end{eqnarray}
\noindent where, as before,
$\psi_{k}=\int_{D}\phi_{k}(\mathbf{y})\psi(\mathbf{y})d\mathbf{y}.$

 Applying the regularized fractional derivative in
time (\ref{1.3}),  we then obtain, in a similar way to Proposition
\ref{pr2},

 \begin{eqnarray}
& &\int_{D} \frac{\partial ^{\beta }}{\partial t^{\beta }}c\left(
t,\mathbf{x}\right)\psi(\mathbf{x})d\mathbf{x}
=\int_{D}I_{t}^{1-\beta }\varepsilon
(t,\mathbf{x})\psi(\mathbf{x})d\mathbf{x} \nonumber\\
& &-\int_{D}\left[P\left((-\Delta_{D} )^{\alpha /2}(I-\Delta
_{D})^{\gamma /2}\right)\int_{0}^{t}\int_{D}G^{D}\left(
t,\mathbf{x};s,\mathbf{y}\right)\varepsilon
(s,\mathbf{y})d\mathbf{y}ds\right]
\psi(\mathbf{x})d\mathbf{x},\nonumber\\
\end{eqnarray}
\noindent for  every $\psi \in \overline{H}^{p(\alpha +\gamma
)}(D),$ as we wanted to prove.

\medskip

\noindent (ii) In a similar way to Theorem \ref{prtqv},

\begin{eqnarray}
& &E[c(t,\mathbf{x})-c(s,\mathbf{x})]^{2}\leq
[C(D)]^{2}\int_{0}^{s}\left[
\widetilde{M}(\beta,\alpha,\gamma,p,n,(s-u)^{\beta })\right.
\nonumber\\& & \left. \hspace*{2cm}+\frac{(s-u)^{-\beta n/p(\alpha
+\gamma)}}{p(\alpha
+\gamma)/n}\int_{0}^{\infty}\frac{x^{\frac{n}{p(\alpha +\gamma
)}-1}}{(1+[\Gamma (1+\beta )]^{-1}x)^{2}}dx\right]du\nonumber\\
& & +[C(D)]^{2}\int_{s}^{t}\left[M(\beta,\alpha,\gamma,p,n,
(t-u)^{\beta})\right.\nonumber\\
& & \left.\hspace*{2cm}+\frac{(t-u)^{-\beta n/p(\alpha
+\gamma)}}{p(\alpha +\gamma)/n}
\int_{0}^{\infty}\frac{x^{\frac{n}{p(\alpha +\gamma )}-1}}{1+[\Gamma
(1+\beta )]^{-1}2x}dx\right]du, \nonumber\\\label{ineqfund}
\end{eqnarray}

\noindent where
\begin{eqnarray}
& &\hspace*{-1cm} M(\beta,\alpha,\gamma,p,n,
(s-u)^{\beta})=\sum_{k=1}^{k_{0}}E_{\beta }\left(-2(s-u)^{\beta
}\lambda_{k}\left(P\left((-\Delta_{D} )^{\alpha /2}(I-\Delta_{D}
)^{\gamma /2}\right)\right)\right)
\nonumber\\
& & \hspace*{-1cm}\widetilde{M}(\beta,\alpha,\gamma,p,n,
(s-u)^{\beta })=\sum_{k=1}^{k_{0}}\left[E_{\beta
}\left(-(s-u)^{\beta }\lambda_{k}\left(P\left((-\Delta_{D} )^{\alpha
/2}(I-\Delta_{D} )^{\gamma
/2}\right)\right)\right)\right]^{2}.\nonumber\\\label{eqp1bbcc}\end{eqnarray}

Hence,  \begin{eqnarray}&
&E[c(t,\mathbf{x})-c(s,\mathbf{x})]^{2}\leq [C(D)]^{2}\left[
K_{1}(\beta,\alpha,\gamma,p,n)s^{1-2\beta}\right.\nonumber\\
& & \hspace*{1cm}\left.+K_{2}(\beta,\alpha,\gamma,p,n)s^{1-\beta
n/(p(\alpha +\gamma)
)}+K_{3}(\beta,\alpha,\gamma,p,n)(t-s)^{1-\beta}\right.\nonumber\\
& & \left.\hspace*{1cm}+K_{4}(\beta,\alpha,\gamma,p,n)(t-s)^{1-\beta
n/((\alpha +\gamma )p)}\right]. \label{ineqfundb}
\end{eqnarray}

Thus, we have
$$E[c(t,\mathbf{x})-c(s,\mathbf{x})]^{2}\leq [C(D)]^{2}g(t-s),$$ \noindent with, for $s\rightarrow t,$ $s<t,$ $$g(t-s)=\mathcal{O}\left((t-s)^{1-\left(\beta
n/p(\alpha +\gamma )\right)\wedge (1-\beta )}\right).$$

\medskip

\noindent (iii) Applying   H\"older continuity of the eigenvectors,
 from Lemma
\ref{th4s},  in a  similar way to Theorem \ref{th2},  for every
$t>0,$
\begin{eqnarray}
& &E[c(t,\mathbf{x})-c(t,\mathbf{y})]^{2}
 \leq C\|\mathbf{x}-\mathbf{y}\|^{2\Upsilon
}
\nonumber\\
& & \hspace*{4cm}\times\int_{0}^{t}\sum_{k=1}^{\infty }\frac{\Gamma
(1+\beta )}{\lambda_{k}\left(P\left((-\Delta )^{\alpha
/2}_{D}(I-\Delta )^{\gamma
/2}_{D}\right)\right)\nu^{\beta }}d\nu \nonumber\\
& & =C\|\mathbf{x}-\mathbf{y}\|^{2\Upsilon
}\left[t^{1-\beta}\sum_{k=1}^{\infty }\frac{\Gamma (1+\beta
)}{\lambda_{k}\left(P\left((-\Delta )^{\alpha /2}_{D}(I-\Delta
)^{\gamma /2}_{D}\right)\right)}\right]
\nonumber\\
& &=Cg(t)\|\mathbf{x}-\mathbf{y}\|^{2\Upsilon},
\end{eqnarray}
\noindent \noindent as we wanted to prove.  Here, for each  fixed
$t>0,$
\begin{equation}g(t)=t^{1-\beta}\sum_{k=1}^{\infty }\frac{\Gamma (1+\beta
)}{\lambda_{k}\left(P\left((-\Delta )^{\alpha /2}_{D}(I-\Delta
)^{\gamma /2}_{D}\right)\right)} <\infty, \label{gfunc}
\end{equation}
\noindent for $p(\alpha +\gamma )>n.$

\medskip

\noindent (iv) In a similar way to Theorem \ref{stinc}, since under
the conditions assumed, $0<\left(1-\frac{\beta n}{p(\alpha
+\gamma)}\right)\wedge (1-\beta )\wedge 2\Upsilon <1,$  applying
Jensen's inequality
 we obtain,
\begin{eqnarray}
& &E[c(t,\mathbf{x})-c(s,\mathbf{y})]^{2} \leq 4([C(D)]^{2}\vee
Cg(T))\left(\frac{1}{2}\right)^{1/2\left[\left(1-\frac{\beta
n}{p(\alpha +\gamma )}\right)\wedge (1-\beta )\wedge
2\Upsilon\right]
}\nonumber\\
& &\hspace*{2cm} \times
\left[\left(|t-s|^{2}+\|\mathbf{x}-\mathbf{y}\|^{2}\right)^{1/2\left[\left(1-\frac{\beta
n}{p(\alpha +\gamma)}\right)\wedge
(1-\beta )\wedge 2\Upsilon \right]}\right.\nonumber\\
& &\hspace*{2cm} +\left.\sqrt{\left(|t-s|^{2}
+\|\mathbf{x}-\mathbf{y}\|^{2}\right)^{\left(1-\frac{\beta
n}{p(\alpha +\gamma)}\right)\wedge (1-\beta )\wedge
2\Upsilon}}\right]\nonumber\\
& &=2C(D,T,\beta ,\alpha,\gamma,p, \Upsilon, n
)\|(t,\mathbf{x})-(s,\mathbf{y})\|^{\left(1-\frac{\beta n}{p(\alpha
+\gamma)}\right)\wedge (1-\beta )\wedge 2\Upsilon},
\end{eqnarray}
\noindent where $\widetilde{C}(D,T,\beta ,\alpha,\gamma,p,\Upsilon, n)= 2C(D,T,\beta ,\alpha,\gamma,p,\Upsilon, n),$ and $$C(D,T,\beta
,\alpha,\gamma,p,\Upsilon, n)=4([C(D)]^{2}\vee
Cg(T))\left(\frac{1}{2}\right)^{1/2\left[\left(1-\frac{\beta
n}{p(\alpha +\gamma)}\right)\wedge (1-\beta )\wedge 2\Upsilon\right]
}.$$

\medskip

\noindent (v) The sample-path regularity properties follow
straightforward from (ii)--(iv), by applying Theorem 3.3.3 in
 Adler \cite{Adler81},  p.57.

\hfill $\blacksquare$

\end{proof}

\section{Final comments}
\label{FC}

\setcounter{section}{9}

Under the conditions assumed in Proposition \ref{pr1}, a mean-square
solution to equations (\ref{1.2})-(\ref{1.2b}) is derived in
 Proposition \ref{pr2}, in the weak-sense on the space $\overline{H}^{\alpha +\gamma }(D)$ (respectively on the space $\overline{H}^{p(\alpha +\gamma )}(D),$
   in the general  case considered in Theorem \ref{thfg}). In particular, from the results derived, we can define a $H^{-(\alpha +\gamma )}(D)-$valued
   stochastic process
   $\{\mathcal{C}_{t},\ t\in \mathbb{R}^{+}\},$ on
   the basic probability space $(\Omega,\mathcal{A},P),$
   satisfying  equation (\ref{1.2}) a.s., i.e.,

\begin{eqnarray}
& & \left\langle \mathcal{C}_{t}(\cdot,\omega),\frac{\partial
^{\beta }}{\partial t^{\beta }}+\left( -\Delta_{D} \right) ^{\alpha
/2}\left( I-\Delta_{D} \right) ^{\gamma /2}\psi (\cdot)
\right\rangle_{L^{2}(D)}\nonumber\\ & & =\left\langle I_{t}^{1-\beta
}\varepsilon (\cdot,\omega),\psi
(\cdot)\right\rangle_{L^{2}(D)},\quad \mbox{a.s},\quad \forall
\psi\in \overline{H}^{\alpha +\gamma}(D),\ t\in
\mathbb{R}_{+}.\label{wshv}
\end{eqnarray}
\noindent  (Note that similar assertions hold for the derived
solution to equation (\ref{1.2pp}) in Theorem \ref{thfg}).   In this
derivation, the orthogonality in
$\mathcal{L}^{2}(\Omega,\mathcal{A},P)$ of the random components of
$\varepsilon$
   is applied, i.e., we have applied that $E[\varepsilon(t,\mathbf{x})]=0,$ and $E[\varepsilon(t,\mathbf{x})\varepsilon (s,\mathbf{y})]=\delta (t-s)\delta (\mathbf{x}-\mathbf{y}),$ for  $t,s\in \mathbb{R}_{+},$
 and $\mathbf{x},\mathbf{y}\in D\subset \mathbb{R}^{n}.$ It is well-known that this property holds for any white noise measure on $L^{2}(\mathbb{R}_{+}\times D),$ beyond the Gaussian case.
 Furthermore, the fractional integration in the definition of the driven process $I^{1-\beta }_{t}\varepsilon $ in equation (\ref{1.2})  is understood in
 the mean-square sense on a suitable space of test functions, as given in Proposition \ref{pr2}. Thus, we have only considered the properties of the second-order moments
 of the distribution of the driven process in equation $(\ref{1.2}).$  Hence, Proposition \ref{pr2} also  holds when the Gaussian space-time white noise on $L^{2}(\mathbb{R}_{+}\times D)$
 is replaced by an arbitrary white noise random measure $\widetilde{\varepsilon}$ on $L^{2}(\mathbb{R}_{+}\times D).$ In particular, L\'evy noise can be considered. In that case, Theorems \ref{prtqv},
  \ref{th2} and \ref{stinc} respectively
 provide the H\"older continuity, in the mean-square sense (i.e., the continuity of the second-order moments), in time, space, and space and time  of the
 weak-sense solution, defined by   integration with respect  to L\'evy noise measure $d\eta,$ as  \begin{equation}
c(t,\mathbf{x})=\int_{0}^{t}\int_{D}G^{D}(t,\mathbf{x};s,\mathbf{y})d\eta
(s,\mathbf{y}),  \label{ris2}
\end{equation}
\noindent with, as before, $G^{D}$ being defined in  (\ref{sr}), for
the case of Proposition  \ref{pr2}, and in (\ref{srbb}), for the
case of Theorem \ref{thfg}. In  both cases, we can interpret the
integral (\ref{ris2}) as a multiparameter It$\widehat{\mbox{o}}$
integral with respect to  $n+1$-parameter L\'evy process $\eta ,$
since, for $\alpha +\gamma >n,$ and for each $t\in \mathbb{R}_{+},$
$G^{D}_{t}$ defines a trace operator $\mathcal{G}_{t}$ on
$L^{2}(D),$ as proved in Proposition \ref{pr1} (see, for example,
L{\o}kka, {\O}ksendal and Proske \cite{Proske}, for an alternative
interpretation and derivation of solutions in that L\'evy noise
case, in terms of functions with values in the Kondratiev space  of
stochastic distributions). Summarizing, the derived results provide
the characterization of the second-order regularity properties of
the weak-sense solution to equations (\ref{1.2})--(\ref{1.2b})
(respectively, to equation (\ref{1.2pp}) in Theorem \ref{thfg}). For
the non-Gaussian case, further research should be developed in order
to obtain the distributional characteristics of (\ref{ris2}), beyond
the second-order moments. This subject will be considered in a
subsequent paper. Note also that  Theorem \ref{thspp}, on the
characterization of the sample-path regularity properties of the
weak-sense solution to equations (\ref{1.2})-(\ref{1.2b}), in the
mean-square sense (respectively,  (v) of Theorem \ref{thfg}) only
holds for the Gaussian case.

The authors have  recently got new results on  fractional-in-time and
   multifractional-in-space stochastic partial differential equations. Such results 
   also appear in the Journal \textbf{Fractional Calculus \& Applied Analysis, Vol. 19, 
   pp. 1434--1459 , DOI: 10.1515/fca-2016-0074,  and  is available online at http://www.degruyter.com/view/j/fca}.

\section*{Acknowledgements}
This work has been supported in part by projects MTM2012-32674  and
MTM2015--71839--P (co-funded with Feder funds), of the DGI, MINECO,
Spain. N. Leonenko was supported in particular by Cardiff Incoming
Visiting Fellowship Scheme and International Collaboration Seedcorn
Fund and Australian Research Council's Discovery Projects funding
scheme (project number DP160101366).


\medskip

{\small
\noindent Vo V. Anh\\
\noindent Queensland University of Technology\\
\noindent e-mail: v.anh@qut.edu.au\\
\noindent School of Mathematical Sciences\\
\noindent GPO Box 2434, Brisbane, QLD 4001, Australia\\

\noindent
Nikolai N. Leonenko\\
\noindent
Cardiff University\\
\noindent
e-mail: leonenkon@cardiff.ac.uk\\
\noindent
Mathematics Institute\\
 Senghennydd Road,  Cardiff, CF24 4AG, U.K.\\

\noindent Mar\'{\i}a D. Ruiz-Medina\\
\noindent University of Granada\\
\noindent e-mail: mruiz@ugr.es\\
\noindent  Faculty of Sciences\\
\noindent  C/ Fuente Nueva s/n, 18071 Granada, Spain\\
}\end{document}